\crefname{hypothesis}{Hypothesis}{Hypotheses}
 \title{A Proximal-Gradient algorithm for crystal surface evolution\thanks{Submitted   June 22, 2020.
\funding{KC was supported in part by the National Science Foundation (NSF) grant DMS-1811012 and a University of California Regent's Junior Faculty Fellowship.
JGL was supported in part by NSF DMS-1812573. JL was
supported in part by NSF under award DMS-1454939.  The research of the
The research of JLM was supported by NSF Grant DMS-1312874 and NSF
CAREER Grant DMS-1352353. 
LW was supported in part by NSF DMS-1903425 and DMS-1846854.
This collaboration is made possible thanks
to the NSF grant RNMS-1107444 (KI-Net).}}}
\author{Katy Craig\thanks{Dept of Math, University of California, Santa Barbara 
(\email{kcraig@math.ucsb.edu})}
\and Jian-Guo Liu\thanks{Dept of Math \& Dept of Physics, Duke University, Durham, NC 27708 (\email{jliu@math.duke.edu}}
\and Jianfeng Lu\thanks{Depts of Math, Chem, \& Phys, Duke University, Durham, NC 27708 (\email{jianfeng@math.duke.edu)}}  
\and
Jeremy L. Marzuola\thanks{Dept of Math, University of North Carolina, Chapel Hill, NC 27599 (\email{marzuola@math.unc.edu})}
\and
Li Wang\thanks{School of Math, University of Minnesota, Twin Cities (\email{wang8818@umn.edu})}}
\DeclareMathOperator{\diag}{diag}
\DeclareSymbolFont{bbold}{U}{bbold}{m}{n}
\DeclareSymbolFontAlphabet{\mathbbold}{bbold}
\newcommand{\R}{\mathbb{R}}
\newcommand{\Amat}{\mathsf{A}}
\newcommand{\Dmat}{\mathsf{D}}
\newcommand{\Lmat}{\mathsf{L}}
\newcommand{\Imat}{\mathsf{I}}
\newcommand{\TT}{\mathbb{T}}
\newcommand{\diver}{\text{div}\,}
\newcommand{\grad}{\nabla}
\DeclareMathOperator*{\argmax}{arg\,max}
\DeclareMathOperator*{\argmin}{arg\,min}
\DeclareMathOperator*{\sgn}{sgn}
\DeclareMathOperator*{\id}{id}
\newcommand{\la}{\langle}
\newcommand{\ra}{\rangle}
\newcommand{\supp}{\operatorname{supp}}
\newcommand{\E}{\mathcal{E}}
\def\dv{{\rm d}}
\begin{document}

\maketitle

\begin{abstract} As a counterpoint to   recent numerical methods for crystal surface evolution, which agree well with microscopic dynamics but suffer from significant stiffness that prevents simulation on fine spatial grids, we develop a new numerical method based on the macroscopic partial differential equation, leveraging its formal  structure as the gradient flow of the total variation energy, with respect to a weighted $H^{-1}$ norm. This   gradient flow structure relates to several metric space gradient flows of recent interest, including   2-Wasserstein     flows and their generalizations to nonlinear  mobilities. We develop a novel semi-implicit time discretization of the gradient flow, inspired by the classical minimizing movements scheme (known as the JKO scheme in the 2-Wasserstein case). We then use  a primal dual hybrid gradient (PDHG)     method to compute each element of the semi-implicit scheme. In one dimension, we prove convergence of the PDHG method to the semi-implicit scheme, under general integrability assumptions on the mobility and its reciprocal. Finally, by taking finite difference approximations of our PDHG method, we arrive at a fully discrete numerical algorithm, with iterations that converge at a rate independent of the spatial discretization: in particular, the convergence properties do not deteriorate as we refine our spatial grid. We close with several numerical examples illustrating the properties of our method, including facet formation at local maxima, pinning at local minima, and convergence as the spatial and temporal discretizations are refined.
\end{abstract}

\begin{keywords}  
Crystal surface evolution;  Facet; Burton-Cabrera-Frank (BCF) model;  gradient flows;   nonlinear mobility; minimizing movements;  operator splitting; primal dual hybrid gradient (PDHG); degenerate-parabolic PDE  
\end{keywords}

% REQUIRED
\begin{AMS}  
 {35A15, 	47J25,	47J35 ,49J45, 49M29, 65K10, 82B21, 82B05}
\end{AMS}

\section{Introduction}

The evolution of a crystal surface near a fixed crystallographic plane of symmetry is determined by the desire to minimize the surface free energy \cite{GruberMullins67,Srolovitz94}. In terms of the height $h(x,t)$ of the surface, $x \in \Omega\subseteq \mathbb{R}^d$, $d \geq 1$, $t \geq 0$, the free energy is given by the well-known total variation energy
\begin{align} \label{firsttvenergy}
\E(h)= \int_\Omega  |\nabla h(x)| \,\dv x .
\end{align}
Facets on the crystal surface are identified with the regions $\{ x :\nabla h(x,t)=0 \}$.  
%(Sometimes an elastic dipole repulsive step-step interaction term is included in the energy, but we do not consider such terms in this work; see e.g. \cite{GruberMullins67,Kukta02-I}.)%,Kukta02-II}.

To formally obtain a  PDE describing the surface dynamics, we briefly recall  some tools from hydrodynamic flows in statistical mechanics.  Setting the atomic volume equal to one, the step chemical potential is given by first variation of the energy \cite{Shenoy02}
\begin{equation*}
\mu_s=\frac{\delta E}{\delta h}=- \Delta_1 h, \qquad \text{with}\ \Delta_1 h:= \nabla \cdot \left(\frac{\nabla h}{|\nabla h|}\right).
\end{equation*}
By the Gibbs-Thomson relation~\cite{Rowlinson,Sethna96,DM_Kohn06,KDM} (which is related to an ideal gas law approximation), the corresponding local-equilibrium density of adatoms is formally 
$\varrho_s = \varrho^0 \exp[\mu_s/(k_B T)]$, where $\varrho^0$ is a constant reference density \cite{Zangwill91,IhleMisbahP-L98}, $T$ is a temperature, and $k_B$ is the Boltzmann constant. An application of Fick's law then predicts that the flux is
\begin{equation*}
\mathbf J=-D_s\,\nabla \varrho_s=-D_s\varrho^0\nabla e^{\mu_s/(k_B T)}~,
\end{equation*}
where $D_s$ is the surface diffusion constant  \cite{DM_Kohn06}. In this way, we obtain the hydrodynamic equation
\begin{equation*}
\partial_t h + \grad \cdot \mathbf J = 0 .
\end{equation*}
Normalizing all constants to be one by rescaling in space and time, we formally arrive at the following PDE  for the evolution of the crystal surface height:
\begin{align}\label{firstcrystalPDE}
\partial_t h=\Delta e^{-\Delta_1 h}\,. % \ , \ \Delta_1 h =  \nabla \cdot \left(\frac{\nabla h} {|\nabla h|} \right) .
\end{align}
A  thorough derivation of \eqref{firstcrystalPDE} from microscopic dynamics can be found in \cite{LLMM1}.  

Away from facets, this equation is consistent with the continuum limit of the Burton-Cabrera-Frank (BCF) theory for moving steps in 2+1 dimensions~\cite{BCF51,DM_Kohn06}.  See also \cite{BonzelPreuss95} for a numerical study of $1d$ facet dynamics.  This equation also relates to a family of Kinetic Monte Carlo models of crystal surface relaxation, including both  the solid-on-solid (SOS) and discrete Gaussian models, in which the 1-Laplacian is replaced by a $p$-Laplacian, $p \geq 1$~\cite{KDM,MW1,gao2019analysis}.

Note that, even in one dimension and for $h(x,t)$ is smooth, the  1-Laplacian $\Delta_1 h$ is a linear combination of positive and negative Dirac masses, so $e^{-\Delta_1 h}$ is not well-defined. Consequently, equation (\ref{firstcrystalPDE}) must be interpreted in a generalized sense. One avenue considered in previous work is to take a first order approximation of the exponential  in the Gibbs-Thomson relation, replacing $e^x$ with $1 +x$, which leads to the  $H^{-1}$ total variation flow studied by Giga, et. al., \cite{KobayashiGiga99,GigaGiga,GigaKohn,Giga1,Odisharia_06} 
\begin{align}
\label{eqn:tvflow}
\partial_t h =    \Delta ( - \Delta_1 h)~\,.
\end{align} 
A limitation of this approach is that it treats local maxima and minima of $h$ symmetrically, in contrast to the original equation (\ref{firstcrystalPDE}), which causes local maxima to form expanding facets, while local minima remain stationary.     Ultimately, determining an appropriate notion of weak solution for     equation (\ref{firstcrystalPDE}) and proving existence of   solutions remains a challenging open problem. 

In spite of these gaps in the  underlying theory of the crystal surface evolution equation, we seek to develop a computationally efficient numerical method for accurate simulation of its solutions, while respecting the inherent asymmetry between facet formation at local maxima and pinning at local minima. Recent work by the middle three authors and Margetis\cite{LLMM1} and the fourth author and Weare \cite{MW1}, numerically explored the crystal surface evolution equation, using various regularizations. On one hand, these simulations compared well with existing microscopic models and respected the different dynamics at  local  maxima and minima. On the other hand, they were not motivated by a strong notion of convergence to the macrosopic PDE dynamics, and due to the inherent stiffness of the  model, were only effective on coarse spatial grids, with serious numerical convergence issues arising on fine grids, even in one dimension.  

In contrast, we construct our numerical method for crystal surface evolution by starting with the macroscopic PDE (\ref{firstcrystalPDE}) and leveraging the formal gradient flow structure of the equation, with respect to weighted $H^{-1}$ norms. To see this structure, note that equation (\ref{firstcrystalPDE}) may be rewritten in the following conservative form,
\begin{align} \label{conservativeform0}
\partial_t h + \grad \cdot \left(M(h) \grad \frac{\partial \E}{\partial h } \right) =0 ,
\end{align}
where $\E$ is the total variation energy (\ref{firsttvenergy}) and $M(h)$ is the exponential mobility
\begin{align} \label{firstmobility}
M(h) &:= e^{ - \Delta_1 h} .
\end{align}
For simplicity in what follows, we suppose that our underlying domain is the $d$-dimensional torus $\TT^d$ and equation (\ref{conservativeform0}) is posed with periodic boundary conditions. We normalize the initial data $h(x,0) = h_0(x)$ to have mean zero, $\int h_0 = 0$, a property that is then propagated along the flow (\ref{conservativeform0}).

 Equations of this form (\ref{conservativeform0}) have a formal gradient flow structure with respect to an   $H^{-1}$ norm  weighted by the mobility $M(h)$, which we describe in detail in section~\ref{formalH-1}. For example, choosing the constant mobility $M(h) \equiv 1$, one recovers  classical $H^{-1}$ gradient flows, and in the case of the linear mobility $M(h) = h+1$, one recovers  2-Wasserstein gradient flows on the space of probability measures \cite{O, AGS}. (Since $h$ has mean zero, $h+1$ is a probability density as long as $h \geq -1$.). There has also been significant work  on equations of this form in the context of reaction diffusion equations  \cite{liero2013gradient} and Cahn-Hilliard equations  \cite{lisini2012cahn}, among many others.

Again, the problem of exponentiating $-\Delta_1 h$ arises in the definition of the mobility (\ref{firstmobility}). In order to circumvent this difficulty and thereby ensure that the weighted $H^{-1}$ gradient flow structure is well-defined, we introduce the following novel approximation: given $\varphi \in C_c^\infty(\TT^d)$, $\varphi \geq 0$, $\int_{\TT^d} \varphi = 1$,  $\varphi_\epsilon(x) := \varphi(x/\epsilon)/\epsilon^d$, we consider
\begin{align} \label{regularizedmobility0}
M_\epsilon(h) := e^{- \varphi_\epsilon* \Delta_1 h  } .
\end{align}
 Unlike previous approximations of $e^{-\Delta_1 h}$ via $1-\Delta_1 h$, our approximation respects the inherent asymmetry  near local maxima and minima of $h$, becoming large when  $- \Delta_1 h \gg 0$ and vanishing when $- \Delta_1 h \ll 0$.  

With this approximation in hand, we are able to precisely define the weighted $H^{-1}$ gradient flow of the total variation energy $\E$ with mobility $M_\epsilon$. Then, with the goal of computing this flow numerically, we  discretize the gradient flow in time, with a fixed time step $\tau >0$, via the following semi-implicit method:
\begin{equation} \label{semiimplicitdef0}
h^{n+1} \in \argmin_{h   }\, \E(h)+\frac{1}{2\tau} \|h-h^{n }\|^2_{H^{-1}_{h^{n }}} .
\end{equation}
This approach is inspired by   the  classical minimizing movements scheme for gradient flows, known as the JKO scheme in the 2-Wasserstein context \cite{AGS,JKO}.   In this way, our numerical method can be seen as an extension of recent literature using minimizing movement schemes to simulate nonlinear PDEs as gradient flows on metric spaces; see  \cite{carrillo2019primal, li2020fisher,cances2019variational,carlier2017convergence} and the references therein. More generally, it builds on the well-known literature using implicit Euler time discretizations to simulate Hilbertian gradient flows, including the $H^{-1}$ total variation flow mentioned   in equation \eqref{eqn:tvflow} above \cite{KohnV}. 

We show that the Euler-Lagrange equation characterizing solutions of the  semi-implicit scheme is a discrete time version of the conservative PDE (\ref{conservativeform0}):
 \begin{align} \label{firstEL}
\frac{h^{n+1} -h^n}{\tau} = - \grad \cdot \left( M(h^n) \grad \frac{\partial \E  \ \ \ \ }{\partial h^{n+1}} \right) . %&\iff  \Delta_{h^n}^{-1} h^{n+1} + \tau \grad \frac{\partial \E  \ \ \ \ }{\partial h^{n+1}} =       \Delta_{h^n}^{-1} h^n \\
%&\iff  h^{n+1} =  (\Delta_{h^n}^{-1}   + \tau   \Delta_1)^{-1} \left(    \Delta_{h^n}^{-1} h^n \right)
\end{align}
(See equations (\ref{formalgradientcomputation}) and (\ref{ELeqn}) below.) Consequently, interpolating in time,
\begin{equation*}
  h^{\tau}(x,t) =  h^n, \quad \mbox{ if } t \in [ n\tau, (n+1) \tau)~,
\end{equation*}
and sending our regularization $\epsilon$ and time step $\tau$ to zero, one formally expects that $h^{\tau}(x,t)$ approaches a solution of the crystal surface evolution equation (\ref{firstcrystalPDE}). We leave   analysis of this convergence   to future work, since it directly relates to the challenging open problem of proving existence of solutions to the crystal surface   equation. Still, we believe that the success of our numerical method, which is based on this semi-implicit scheme, provides empirical evidence that the gradient flow framework is the appropriate setting for studying generalized solutions to this equation.

In order to translate the semi-implicit scheme (\ref{semiimplicitdef0}) into a fully discrete numerical method, we use a primal dual hybrid gradient (PDHG) \cite{
  chambolle2016ergodic} approach, which we describe in detail in section \ref{PDHGsection}. This approach allows us to handle the presence of the 1-Laplacian, as well as preserve the energy decreasing property at the discrete level. Given the a step of the semi-implicit scheme $h^n$, our PDHG method iteratively defines a new  sequence $h^{(m)}$ that is initialized at $h^n$ and converges to $h^{n+1}$. A key point in the definition of our PDHG method is that we  use  different norms to penalize the primal and dual variables. As discovered by  Jacobs, L\'eger, Li, and Osher
\cite{jacobs2019solving}, appropriate selection of the norms is
essential to obtaining a scheme that is convergent at the spatially continuous level and leads to a fully discrete numerical method with a rate of convergence that does not deteriorate as the spatial discretization is refined; see Remark
\ref{normchoice}. Our PDHG method is well-defined for the total variation energy $\E$ (\ref{firsttvenergy}) and \emph{any} integrable mobility $M(h)$, including the regularized exponential mobility (\ref{regularizedmobility0}). Provided that the mobility and its reciprocal remain integrable along the sequence $h^n$, which holds for the regularized exponential mobility, our main convergence result Theorem \ref{mainconvresult} proves that the inner PDHG interates $h^{(m)}$ converge to a solution of the outer scheme $h^{n+1}$. We prove this result in one spatial dimension, which coincides with the context of our  numerical simulations. Furthermore, if our initialization $h^n$ has the regularity $( h^n)' \in BV(\TT)$, our theorem provides a  rate of convergence for the PDHG method. We remark that existing  convergence results  for PDHG algorithms do not apply in our context, since our initialization of the primal variable $h^n$ is, in general, infinite $\dot{H}^1$ distance from the optimizer $h^{n+1}$ \cite{chambolle2016ergodic, shefi2014rate}. We instead build upon the approach   introduced by Jacobs, L\'eger, Li, and Osher for the Rudin-Osher-Fatemi image denoising model  \cite{jacobs2019solving, rudin1992nonlinear}. 

Finally, in section \ref{numericalmethodsec}, we use this time discretization of the weighted $H^{-1}$ gradient flow as the basis for a fully discrete numerical scheme,  replacing the spatially continuous operators in our PDHG method with their finite difference  counterparts. In Remark \ref{discreteconvergencermk}, we  describe how the convergence of this fully discrete scheme, with a rate independent of the spatial discretization, follows from similar arguments as given in   Theorem \ref{mainconvresult}. The importance of achieving convergence rates, uniform in the spatial discretization,  was illustrated in several examples by Jacobs, L\'eger, Li, and Osher
\cite{jacobs2019solving}. In the context of our problem, this is even more essential. Convexity properties of  $\| \cdot \|_{H^{-1}_{h^n}}^2$ depend on lower bounds on the  eigenvalues of the weighted Laplacian $\Delta_{h^n}^{-1}$ (see equation \ref{Hm1char}), which may deteriorate along the flow. We are able to cope with this numerically by choosing our inner primal time step in the PDHG method to be relatively large, in agreement with our estimates for the optimal choice in  Theorem \ref{mainconvresult}. This would be impossible with a more classical PDHG method, in which the inner time steps are required to become arbitrarily small  as the spatial discretization is refined.

 We conclude, in section \ref{numericssection}, with several numerical examples that illustrate properties of our method. Our scheme accurately captures facet formation at local maxima and pinning at local minima. Unlike previous numerical methods, which required  a coarse spatial discretization, we observe near first order convergence in both space and time as the spatial discretization and time step $\tau$ are refined. Finally, we also illustrate the importance of norm selection in our PDHG method, showing that selecting norms following the classical $L^2$ approach can cause the number of iterations required for convergence to increase dramatically as the spatial discretization is refined.
 
There are several directions for future work. As mentioned above, we believe that the strength of our numerical method gives hope that the weighted gradient flow setting is the appropriate context in which to define and prove existence of generalized solutions to the crystal surface evolution equation, by analyzing the convergence of the semi-implicit method as $\tau \to 0$ and $\epsilon \to 0$. Our semi-implicit time discretization and PDHG algorithm can also be naturally extended to related crystal evolution PDEs: see Remark \ref{standardlaplacian}, where we describe how the 1-Laplacian in equation (\ref{firstcrystalPDE}) can be replaced by the standard Laplacian. Finally, our convergence result for the PDHG scheme holds for general, integrable mobilities $M(h)$. Consequently, it would be natural to extend our approach  to simulate related gradient flows for other choices of nonlinear mobilities, such as $M(h) = (1+h) (1-h)$ \cite{lisini2012cahn, elliott1996cahn,carrillo2009fermi}.
 
\section{Crystal height evolution as a weighted $H^{-1}$ gradient flow}
\label{formalH-1}

We now describe the weighted $H^{-1}$ gradient flow structure of  the crystal height evolution PDE (\ref{firstcrystalPDE}).
In section \ref{subsec:weight-def}, we define the weighted $H^{-1}$ spaces and the corresponding notions of gradient flow. In section \ref{subsec:Euler-scheme}, we introduce the semi-implicit time discretization of the gradient flow, which is the basis of our numerical scheme. In section \ref{regmob}, we discuss how to apply this framework to the crystal height evolution   equation.% In particular, we discuss some limitations of the mean curvature dependent exponential mobility $M(h)$ in equation (\ref{conservativeform0})---even for smooth functions in one dimension, this requires exponentiation of Dirac masses---and introduce a regularized mobility, with which the weighted gradient flow structure can be made precise. We leave the questions of convergence, for both our time discretization of the gradient flow and our regularization of the mobility, to future work. Our hope is that the framework developed in the present paper will provide the first steps toward the rigorous study of these limits and, ultimately, a proof of existence for solutions to the crystal height evolution equation (\ref{firstcrystalPDE}).

%%%%%%%%%%%%%%%%%%%%%%%%%%%%%%%%%%%%%%%%%%%%%%
\subsection{Weighted $H^{-1}$ gradient flow}
\label{subsec:weight-def}  

For any $h: \TT^d \to \R$, let $M(h) \in L^1(\TT^d)$ denote a nonnegative mobility. Using this   mobility, we  define the   weighted Hilbert space $H^1_h(\TT^d)$ as the completion of
$C^{\infty}(\mathbb T^d)$ functions with mean zero, under  the weighted norm or inner product
\begin{align} \label{norm}
 &   \| v \|^2_{H^1_h} = \int_{\mathbb T^d} M(h) |\nabla v|^2\, \dv x~, \\
 \label{ip}
  &  (u, v)_1 =  \int_{\mathbb T^d} M(h) \nabla u  \cdot  \nabla v \, \dv x~.
\end{align}
We  define $H^{-1}_h(\TT^d) :=\left( H^1_h(\TT^d)\right)^{*}$ to be the dual space of $H^1_h$
 and let $\langle \cdot, \cdot \rangle$ denote the duality pairing. % By our assumption  that elements of $H^1_h(\TT^d)$ have mean zero, elements of $H^{-1}_h$ are indistinguishable up to the addition or subtraction of constants. Constantly, we assume, without loss of generality, that 

By the Riesz-Fr\'echet representation theorem, 
the duality mapping $J : H^1_h \to H^{-1}_h $ given by
\begin{equation*}
 \langle J(v), u \rangle = (v, u)_1, \quad \forall u \in  H^1_h \, 
\end{equation*}
is surjective.
Now, consider the weighted Laplacian operator
\begin{equation*}
 \Delta_h u = \nabla \cdot ( M(h) \nabla u) \, ,
\end{equation*}
which is well defined for $u \in C^\infty(\TT^d)$, in the sense of distributions.
For $u,v \in C^\infty(\TT^d)$ with mean zero,
by definition of  $(\cdot, \cdot)_1$ and integration by parts, we have
\begin{equation*}
   \langle J(v), u \rangle =   (v, u)_1  = - \int_{\mathbb T^d} u  \Delta_{h} v \,\dv x~.
\end{equation*}
Hence, we identify  $J(v) = - \Delta_h v$. 

The inverse map $J^{-1}: H^{-1}_h \to H^1_h,  \phi \mapsto  J^{-1}(\phi)$ is then given
by
\begin{equation*}
  \langle \psi, J^{-1}(\phi)\rangle  = (\psi, \phi)_{-1} 
 = - \int_{\mathbb T^d} \psi ( \Delta_{h}^{-1}\phi)\, \dv x~, \quad \forall \psi \in H^{-1}_h  \, ,
\end{equation*}
where $(\cdot, \cdot)_{-1}$ denotes the inner product for  $H^{-1}_h$ and  $\Delta_h^{-1}$ denotes the inverse operator of $\Delta_h$ with mean zero. Consequently, we obtain,
\begin{equation} \label{Hm1char}
\|\psi\|^2_{H^{-1}_{h}} = - \int_{\TT^d} \psi\Delta_{h }^{-1}\psi \, \dv x~.
\end{equation}

We now turn to the differential structure induced by the $H^{-1}_h$ norm. Given a convex functional  $\E: H^{-1}_h\to \mathbb{R}\cup \{+\infty\}$, its subdifferential is  
$$
   \partial_{H^{-1}_h} \E(\psi) = \left \{ \xi \in    H^{-1}_h(\TT^d) :
   \E(\varphi) \ge \E(\psi) + ( \varphi-\psi ,\xi  )_{-1} \quad \forall \varphi \in H^{-1}_h \right\}~.
$$
For example,   the identity mapping $\psi \mapsto \{ \psi\}$ is the subdifferential of 
the convex functional $\E(\psi)=\tfrac12 \| \psi \|^2_{ H^{-1}_h}$.
%Katy note to self: rather than having the subdifferential be contained in the dual space, I use Riesz Representation to write it as contained in the primal space. This is the standard approach for Hilbert space graident flows and avoids too many applications of the duality mapping.

Using this notion of subdifferential, we may   define $H^{-1}_h$ gradient flows. In order for our construction of the weighted Hilbert spaces to remain valid, we require that $M(h)$ remains nonnegative and integrable along the flow, that is, the flow remains in the space
\begin{align*}
L_M^1  = \left\{ h : \TT^d \to \R : M(h) \in L^1  \text{ and } M(h)  \geq 0 \right\} .
\end{align*}

Next, we introduce a notion of time derivative for a flow $h(t)$ evolving through the Hilbert spaces $H^{-1}_{h(t)}$.
\begin{definition} \label{absolutely continuous}
Given $h: [0,T] \to L^1_M$ such that $h(t) \in H^{-1}_{h(t)}$ for all $t \in [0,T]$, we say that \emph{$h(t)$ is differentiable with respect to  $\| \cdot \|_{H^{-1}_{h(t)}}$} in case, for all $t \in [0,T]$, there exists $\epsilon >0$ so that, for all $s \in (t-\epsilon , t+\epsilon) \cap [0,T]$,  $h(s) \in H^{-1}_{h(t)}$ and $h(s)$ is Fr\'echet differentiable with respect to $\| \cdot \|_{ H^{-1}_{h(t)}}$ .\end{definition}

With this, we can now define an $H^{-1}_{h(t)}$ gradient flow.
\begin{definition} \label{GFdef}
Given    $h:[0,T] \to L_M^1 $ such that $h(t) \in H^{-1}_{h(t)}$ is differentiable, we say $h$ is an $H^{-1}_{h(t)}$ \emph{gradient flow} of an energy $\E : H^{-1}_{h(t)} \to \R \cup \{ +\infty \}$ with initial condition $h_0$ in case 
\begin{align} \label{generalweightedflow}
\begin{cases}  \partial_t h(t)   \in - \partial_{H^{-1}_{h(t)}} \E(h(t)) \text{ for  all }t \in [0,T], \\
h(0) = h_0 .
\end{cases}
\end{align}
\end{definition}

In particular, given an energy $\E : H^{-1}_h \to \R \cup \{ +\infty \}$, we formally obtain the following expression for its gradient with respect to $H^{-1}_h(\TT^d)$,
\begin{align*}
\lim_{\epsilon \to 0} \frac{ \E(\psi + \varepsilon \xi)  - \E(\psi)}{\varepsilon}  &= \int_{\TT^d}  \frac{\partial E}{\partial \psi}  \xi
= \int_{\TT^d}  \Delta_h^{-1} \Delta_h \frac{\partial E}{\partial \psi}   \xi  = \left( \Delta_h \frac{\partial E}{\partial \psi} , \xi \right)_{-1}
\end{align*}
Therefore,
\begin{align*}
\grad_{H^{-1}_h} \E(\psi) =  \Delta_h \frac{\partial E}{\partial \psi} .
\end{align*}
Consequently, under sufficient regularity of the energy functional $\E$ and under the assumption that the mobility remains integrable and nonnegative along the flow, $H^{-1}_h(\TT^d)$ gradient flows  correspond to solutions of the conservative PDE (\ref{conservativeform0}),
\begin{align} \label{formalgradientcomputation}
 \partial_t h  = - \grad_{H^{-1}_h} \E(h)   \iff \partial_t h + \Delta_h \frac{\partial E}{\partial h} = 0 \iff \partial_t h + \grad \cdot \left(M(h) \grad \frac{\partial E}{\partial h} \right) =0 . \hspace{-.5cm}
\end{align}

%%%%%%%%%%%%%%%%%%%%%%%%%%%%%%%%%%%%%%%%%%%%%%%%%%%%%%%%
\subsection{Semi-implicit   scheme for $H_h^{-1}$ gradient flows}
\label{subsec:Euler-scheme}

We now describe a semi-implicit analogue of the classical minimizing movement scheme   to discretize  our   $H_h^{-1}$ gradient flows in time: given $h^n \in L^1_M  \cap H^{-1}_{h^n} $, solve
\begin{equation} \label{semiimplicitdef}
h^{n+1} \in \argmin_{h \in H^{-1}_{h^n}}\, \E(h)+\frac{1}{2\tau} \|h-h^{n}\|^2_{H^{-1}_{h^{n}}} .
\end{equation}
In the particular case   $M(h) = h+1$, $h \geq -1$,  $H^{-1}_h$ gradient flows are 2-Wasserstein gradient flows, and the above method can be interpreted as a semi-implicit variant of the  Jordan Kinderlehrer Otto (JKO) scheme \cite{JKO}, in which the Wasserstein distance is approximated by the corresponding weighted $H^{-1}$ norm at the previous time step \cite{cances2019variational}.

We begin by showing that, as long as the energy $\E$  is convex, lower semicontinuous, and has compact sublevels with respect to an appropriate topology and  $\E(h^n)<+\infty$, then there exists a unique solution to this semi-implicit scheme.

\begin{proposition} \label{existenceminmov}
Fix $h^n \in   L^1_M \cap H^{-1}_{h^{n}}$ and consider an energy $\E:H^{-1}_{h^n} \to \R \cup\{+\infty\}$. Suppose $\E$  is convex and that there exists a topology $\sigma$ so that $\E$ and $H^{-1}_{h^n}$ are both lower semicontinuous with respect to $\sigma$ and the sublevel sets of $\E$ are relatively $\sigma$-compact in $H^{-1}_{h^n}$.  Then, if $\E(h^n)<+\infty$,  there exists a unique $h^{n+1}$ so that
\begin{align}  \label{hndefphi}
h^{n+1} \in \argmin_{h \in H^{-1}_{h^n}} \, \Phi(h) , \quad \text{ for }\quad \Phi(h) := \E(h)+\frac{1}{2\tau} \|h-h^n\|^2_{H^{-1}_{h^n}} . 
\end{align}
\end{proposition}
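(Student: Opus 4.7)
The plan is to apply the direct method of the calculus of variations, using the $\sigma$-lower semicontinuity and the relative $\sigma$-compactness of sublevel sets provided by the hypotheses, and then to obtain uniqueness from strict convexity of the quadratic penalty.

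\emph{Existence.} First I would check that $\inf\Phi$ is finite in both directions. The inequality $\Phi(h^n)=\E(h^n)<+\infty$ gives $\inf_{h\in H^{-1}_{h^n}}\Phi(h)<+\infty$. For a lower bound, note that $\sigma$-lsc of $\E$ combined with relative $\sigma$-compactness of its sublevel sets implies that $\E$ attains its infimum on any nonempty sublevel, so $c:=\inf\E>-\infty$ and hence $\Phi\geq c$. Next, take a minimizing sequence $\{h_k\}\subset H^{-1}_{h^n}$ with $\Phi(h_k)\to\inf\Phi$. Since the quadratic penalty is nonnegative, $\{\E(h_k)\}$ is bounded above by some constant, placing $\{h_k\}$ in a single sublevel set of $\E$. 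By hypothesis this set is relatively $\sigma$-compact in $H^{-1}_{h^n}$, so after passing to a subsequence $h_k\to h^\ast$ in $\sigma$ for some $h^\ast\in H^{-1}_{h^n}$. The $\sigma$-lsc of $\E$ gives $\E(h^\ast)\leq\liminf_k \E(h_k)$, and the analogous property for $\|\cdot\|_{H^{-1}_{h^n}}$, applied after noting that $h_k-h^n\to h^\ast-h^n$ in $\sigma$ (since $\sigma$ is a linear topology on $H^{-1}_{h^n}$ and $h^n$ is fixed), yields $\|h^\ast-h^n\|_{H^{-1}_{h^n}}^2\leq\liminf_k\|h_k-h^n\|_{H^{-1}_{h^n}}^2$. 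Adding the two inequalities gives $\Phi(h^\ast)\leq\liminf_k\Phi(h_k)=\inf\Phi$, so $h^\ast$ is a minimizer.

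\emph{Uniqueness.} The map $h\mapsto\frac{1}{2\tau}\|h-h^n\|_{H^{-1}_{h^n}}^2$ is strictly convex on the Hilbert space $H^{-1}_{h^n}$, and $\E$ is convex by hypothesis. Therefore $\Phi$ is strictly convex, and its minimizer is unique: if $h^\ast$ and $\tilde h^\ast$ were two distinct minimizers, then $\Phi\bigl(\tfrac12(h^\ast+\tilde h^\ast)\bigr)<\tfrac12\Phi(h^\ast)+\tfrac12\Phi(\tilde h^\ast)=\inf\Phi$, a contradiction.

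The step requiring the most care is the lower semicontinuity of $h\mapsto\|h-h^n\|_{H^{-1}_{h^n}}^2$ in the $\sigma$-topology. This follows directly from the hypothesized $\sigma$-lsc of the norm on $H^{-1}_{h^n}$, provided that $\sigma$ is a vector topology on $H^{-1}_{h^n}$ for which translation by the fixed element $h^n$ is $\sigma$-continuous; this is the natural reading of the assumption and matches the typical applications (e.g.\ the weak topology on the Hilbert space, or a weak-$\ast$ topology from a suitable predual). Beyond this check, the argument is a routine direct-method plus strict-convexity combination, with no additional structural difficulty.
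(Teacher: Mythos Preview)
Your proof is correct and follows the same direct-method-plus-strict-convexity approach as the paper. The paper's argument is slightly terser (it does not separately establish $\inf\E>-\infty$, since the bound $\E(h_k)\leq\Phi(h_k)\leq\Phi(h^n)+1$ for large $k$ already places the minimizing sequence in a sublevel set), but your extra care about translation-continuity of $\sigma$ when passing from lsc of the norm to lsc of $h\mapsto\|h-h^n\|_{H^{-1}_{h^n}}^2$ is a point the paper leaves implicit.
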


\begin{remark}  
Our assumption that $h^n \in L^1_M$, or equivalently, that the mobility $M(h^n)$ is integrable and nonnegative, is necessary for  the weighted Hilbert spaces to be well-defined. Analogous requirements on the mobility have   arisen in recent work by Canc\'es, Gallou\"et, and Todeschi \cite{cances2019variational}, in which they consider a fully-implicit time discretization, in the special case that $M(h) = h+1$ and $h >-1$. \end{remark} 
 
\begin{remark} We choose to introduce the additional topology $\sigma$ in Proposition \ref{existenceminmov} due to the fact that, in general, the topology induced by $H^{-1}_{h_n}$ may not be strong enough to ensure lower semicontinuity of the energy. In particular, this is the case for the exponential mobility and total variation energy we consider in the next section.
\end{remark}

  \begin{proof}[Proof of Proposition \ref{existenceminmov}]
 First, we consider existence.   Since $\Phi(h^n) = \E(h^n)<+\infty$,  
 \[ \inf_{h \in H^{-1}_{h^n}(\TT)}  \Phi(h)   <+\infty , \]
  and we may choose a minimizing sequence $h^k \in H^{-1}_{h^n}(\TT) $ so that  $\lim_{k \to +\infty} \Phi(h^k)= \inf_h \Phi(h)$. Since $\Phi(h) \geq \E(h)$, $\{h^k\}$ belongs to a sublevel set of $\E$, so up to a subsequence, there exists $\bar{h}$ so that $h^k \xrightarrow{\sigma} \bar{h} \in H^{-1}_h$. By lower semicontinuity of $\E$ and $\|\cdot \|_{H^{-1}_{h^n}(\TT)}$ with respect to $\sigma$, $\liminf_{k \to +\infty} \Phi(h^k) = \Phi(\bar{h})$. Thus,  $\bar{h}$ is a solution of (\ref{semiimplicitdef}), so a solution exists.

It remains to show uniqueness. Suppose $h^{n+1}$ and $\bar{h}$ are distinct solutions of (\ref{semiimplicitdef}). Define $h_\alpha = (1-\alpha) h^{n+1} + \alpha \bar{h}$. Then, by the convexity of $\E$ and the strict convexity of $h \mapsto \| h - h^n \|^2_{H^{-1}_{h^n}}$,
\begin{align*}
\Phi(h_\alpha) < (1-\alpha) \Phi(h^{n+1}) + \alpha \Phi(\bar{h}) = \inf_{h \in H^{-1}_h(\TT^d)} \Phi(h) ,
\end{align*}
which is a contradiction. Therefore $h^{n+1} = \bar{h}$, so solutions of  (\ref{semiimplicitdef}) are unique.
 \end{proof}
 
Given a discrete sequence $\{h^n\}$ defined by our semi-implicit scheme (\ref{semiimplicitdef}), the convexity of $\Phi$ and the fact that $h^{n+1}$ is a global minimum implies that we have the following Euler-Lagrange equation characterizing $h^{n+1}$, 
\begin{align} \label{ELeqn} 0 \in \partial_{H^{-1}_{h^n}} \Phi(h^{n+1})  \iff \frac{h^{n+1} -h^n}{\tau} \in -\partial_{H^{-1}_{h^n}} \E(h^{n+1}) .\end{align}
Consequently, interpolating in time,
$h^{\tau}(x,t) =  h^n$, if $t \in [ n\tau, (n+1) \tau)$, 
one formally expects that, under sufficiently regularity of $\E$ and $h^n$, as $\tau \to 0$, $h^{\tau}(x,t)$ approaches a solution of the $H^{-1}_{h^n}$ gradient flow, in the sense of Definition \ref{GFdef}. A key difficulty in the  analysis of this limit is proving that the discrete time solutions $h^n$ remain in the space $L_M^1$, so that the weighted dual Sobolev spaces $H^{-1}_{h^n}$ remain well-defined. This depends strongly on the choice of energy $\E$. We leave the rigorous study of this limit to future work.   Our hope is that the framework developed in the present paper will provide the first steps toward the rigorous study of this limit  and, ultimately, a proof of existence for solutions to the crystal height evolution equation.

%%%%%%%%%%%%%%%%%%%%%%%%%%%%%%%%%%%%%%%%%%%%%%%%%%%
\subsection{$H^{-1}_h$ gradient flow for crystal surface evolution}
\label{regmob}
\ 
We now describe  how our crystal surface evolution equation fits into this  gradient flow framework. As discussed in the introduction,  the crystal surface evolution PDE may be formally rewritten in conservative form (\ref{conservativeform0}) for an  exponential mobility (\ref{firstmobility}) and the total variation energy.
 However, in order for this formal description to coincide with a well-defined $H^{-1}_h$ gradient flow, we must extend the energy to a functional defined on all of $H^{-1}_h$, satisfying the hypotheses of Proposition \ref{existenceminmov},  and the mobility must remain nonnegative and integrable along the flow.  We now consider  each of these issues. 
  \subsubsection{Total variation energy on $H^{-1}_h $}
Since $H^1_h $ is defined as the completion of $C^\infty $ functions with mean zero under the $H^1_h$ norm, any element $\psi \in H^{-1}_h $ is uniquely defined by its action on such smooth functions. In particular, if there exists $f \in L^1 $ with mean zero so that $\la \psi, v \ra = \int_{\TT^d} f v \, \dv x$ for all $v \in C^\infty $ with mean zero, we will identify $\psi$ with $f$ and say $\psi \in L^1 $. (We   restrict to $f$ with mean zero, since such an $f$ is only determined up to a constant.)

In this way, we  extend the definition of the total variation energy to   $H^{-1}_h $,
  \begin{align} \label{energydef}
\E(\psi) := \begin{cases} \| \psi\|_{TV} &\text{ if } \psi \in L^1(\TT^d) \text{ and } \int \psi = 0,  \\
+\infty &\text{ otherwise,}\end{cases}
\end{align}
where, for any $\psi \in L^1(\TT^d)$,
\begin{align} \label{tvdef}
\|\psi\|_{TV} :=  \sup_{\phi } \left\{ - \int_{\TT^d}   \psi \grad \cdot \phi  :  \phi \in C^\infty(\TT^d),  \  \| \phi \|_{\infty} \leq 1    \right\} .
\end{align}
Furthermore, if $\|\psi\|_{TV} < +\infty$, then the distributional derivative $\grad \psi$ is a signed measure and
\begin{align} \label{tvdef2}
\|\psi\|_{TV} =  \sup_{\phi } \left\{ \int_{\TT^d}  \grad  \psi  \cdot \phi  :  \phi \in L^\infty(\TT^d),  \  \| \phi \|_{\infty} \leq 1    \right\} .
\end{align}

We now show that $\E$ satisfies the hypotheses of Proposition \ref{existenceminmov}, so that the semi-implicit scheme is well defined. 
 
\begin{proposition} \label{TVenergyprop}
Consider $h \in L_M^1 $, so that $M(h)$ is nonnegative and integrable, and consider the total variation energy $\E: H^{-1}_h  \to \R \cup \{+\infty\}$ defined in equation (\ref{energydef}). Then $\E$ is convex, and, letting $\sigma$ denote the topology of convergence in distribution,  $\E$ and $H^{-1}_h$ are both lower semicontinuous with respect to $\sigma$ and the sublevel sets of $\E$ are relatively $\sigma$-compact in $H^{-1}_h$.  \end{proposition}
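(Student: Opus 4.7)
The plan is to verify the four claimed properties in turn, reducing each to standard facts about $BV(\TT^d)$ and the $H^1_h$--$H^{-1}_h$ duality set up in Section~\ref{subsec:weight-def}. Convexity is immediate: representations (\ref{tvdef}) and (\ref{tvdef2}) express $\|\cdot\|_{TV}$ as the pointwise supremum of linear functionals in $\psi$, so $\|\cdot\|_{TV}$ is convex on $L^1(\TT^d)$, and since $\{\psi \in L^1(\TT^d) : \int \psi = 0\}$ is a linear subspace, extending $\E$ by $+\infty$ outside it preserves convexity.

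For $\sigma$-lower semicontinuity of $\E$, note that for each admissible $\phi$ the map $\psi \mapsto -\int \psi\, \grad \cdot \phi$ is $\sigma$-continuous, so the supremum in (\ref{tvdef}) is $\sigma$-lsc on the ambient space of distributions. To reconcile this with the $+\infty$ extension, suppose $\psi_n \xrightarrow{\sigma} \psi$ with $L := \liminf_n \E(\psi_n) < +\infty$ and pass to a subsequence $\psi_{n_k}$ realising $L$; pairing against the constant $1$ preserves mean zero under $\sigma$, and the Poincar\'e inequality for mean-zero $BV$ functions on the torus gives $\|\psi_{n_k}\|_{L^1} \leq C_P \|\psi_{n_k}\|_{TV}$, so $\{\psi_{n_k}\}$ is bounded in $BV(\TT^d)$. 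The Rellich compactness theorem then yields a further subsequence converging in $L^1$ to some $\tilde\psi$, which must equal $\psi$ by uniqueness of distributional limits, whence $\psi \in L^1$ with $\int \psi = 0$ and $\E(\psi) \leq L$ by $L^1$-lsc of $\|\cdot\|_{TV}$. The lsc of $\|\cdot\|_{H^{-1}_h}$ is easier: Riesz representation together with density of smooth mean-zero functions in $H^1_h$ yields
\[
\|\psi\|_{H^{-1}_h} = \sup\bigl\{ \langle \psi, v\rangle : v \in C^\infty(\TT^d),\ \int v = 0,\ \|v\|_{H^1_h} \leq 1 \bigr\},
\]
a supremum of $\sigma$-continuous linear functionals.

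The last property, relative $\sigma$-compactness of the sublevels $\{\E \leq C\}$ in $H^{-1}_h$, is where I expect the main technical obstacle. The compactness itself falls out of the argument just given: any such sequence is bounded in $BV(\TT^d)$, hence has an $L^1$-convergent, and therefore $\sigma$-convergent, subsequence whose limit $\psi$ lies in $L^1$ with mean zero and $\|\psi\|_{TV} \leq C$. The delicate point is verifying that this limit actually sits in $H^{-1}_h$ rather than merely in the ambient distributions, since $M(h)$ is only assumed nonnegative and integrable and so $H^1_h$ need not embed into any $L^p$ for $p>1$, which makes continuity of $v \mapsto \int \psi v$ on $H^1_h$ nontrivial. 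I would handle this by combining the $\sigma$-lsc of $\|\cdot\|_{H^{-1}_h}$ with the fact that each $\psi_n$ already lies in $H^{-1}_h$, and, in the setting of the regularised mobility $M_\epsilon$ of Section~\ref{regmob}, by using that $\varphi_\epsilon * \Delta_1 h$ is bounded so that $M_\epsilon(h)$ is pinched between positive constants, reducing $H^{-1}_h$ to $H^{-1}(\TT^d)$ and invoking the classical embedding $BV(\TT^d) \cap \{\int = 0\} \hookrightarrow H^{-1}(\TT^d)$.
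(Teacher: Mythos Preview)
Your argument tracks the paper's proof closely: convexity from the supremum representation, lower semicontinuity of both $\E$ and $\|\cdot\|_{H^{-1}_h}$ as suprema of $\sigma$-continuous linear functionals, and BV--Rellich compactness for the sublevel sets. The one substantive difference is in the lsc of $\E$: the paper simply swaps $\liminf$ and $\sup$ in (\ref{tvdef}), whereas you first pass via Rellich to an $L^1$-convergent subsequence and then invoke $L^1$-lsc of the TV seminorm. Both are valid; your route has the merit of making explicit that the distributional limit actually lies in $L^1$ with mean zero, a point the paper leaves tacit.

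On the issue you flag as delicate --- why the compactness limit lies in $H^{-1}_h$ --- the paper does precisely what your first clause proposes: it invokes lsc of $\|\cdot\|_{H^{-1}_h}$ together with $\psi_n \in H^{-1}_h$ and stops there. Your caution is warranted: lsc yields only $\|\psi\|_{H^{-1}_h} \le \liminf_n \|\psi_n\|_{H^{-1}_h}$, and the hypothesis $\E(\psi_n)\le C$ alone does not bound the right-hand side for a general nonnegative integrable $M(h)$, so neither argument closes this in the stated generality. Your fallback via the regularised mobility $M_\epsilon$ (bounded above and below, so $H^{-1}_h \simeq H^{-1}$) does resolve it, and in the actual application (Proposition~\ref{existenceminmov}) the minimising sequence for $\Phi$ carries an a priori $H^{-1}_{h^n}$ bound from the $\frac{1}{2\tau}\|\cdot - h^n\|^2_{H^{-1}_{h^n}}$ term, so the issue is moot where it matters.
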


\begin{proof}
The convexity of $\E$ follows immediately from the fact that $L^1 \cap H^{-1}_h $ is convex and $\| \cdot \|_{TV}$ is a convex functional on $L^1$. 

Next, we show that  $\E$ and $\| \cdot \|_{H^{-1}_h}$  are lower semicontinuous with respect to convergence in distribution. 
We begin with  $\| \cdot \|_{H^{-1}_h}$. Suppose $\psi^k \in H^{-1}_h$ converges to $\psi \in H^{-1}_h$ in distribution. Then,
\begin{align*}
\liminf_{k \to \infty} \|\psi^k \|_{H^{-1}_{h }} &= \liminf_{k \to \infty} \sup_{\phi \in C^\infty, \|\phi \|_{H^1_{h }} \leq 1 } \la \psi^k , \phi  \ra\geq \sup_{\phi \in C^\infty, \|\phi \|_{H^1_{h }} \leq 1 }  \liminf_{k \to +\infty} \la \psi^k, \phi \ra \\
&=  \sup_{\phi \in C^\infty, \|\phi \|_{H^1_{h }} \leq 1 }  \la \psi, \phi \ra = \|\psi \|_{H^{-1}_h}
\end{align*}

We now show lower semicontinuity of $\E$  with respect to a sequence $\psi^k \in H^{-1}_h$ converging to $\psi \in H^{-1}$ in distribution. Without loss of generality, we may assume that $\liminf_{k \to \infty} \E(\psi^k) < +\infty$, or the result is trivially true. Consider a subsequence, $\psi^{k^l}$ that attains the limit, i.e. $\liminf_{k \to \infty} \E(\psi^k) = \lim_{l \to _\infty} \E(\psi^{k^l})$ and for which $\E (\psi^{k^l}) < +\infty$. For simplicity of notation, we identify this subsequence with the original sequence $\psi^k$. Since $\E(\psi^k) < +\infty$ for all $k$, along this sequence, the energy coincides with $\| \cdot \|_{TV}$. Thus, 
\begin{multline*} 
  \lim_{k \to \infty} \E(\psi^k) = \lim_{k \to \infty}  \|\psi_k\|_{TV} =  \lim_{k \to +\infty}  \sup_{\phi \in C^\infty, \|\phi \|_\infty \leq 1 }- \int   \psi^k \grad \cdot \phi \\
   \geq \sup_{\phi \in C^\infty, \|\phi \|_\infty \leq 1 }  \lim_{k \to +\infty} -\int \psi^k \grad \cdot \phi 
=\sup_{\phi \in C^\infty, \|\phi \|_\infty \leq 1 }  -\int \psi \grad \cdot \phi = \|\psi\|_{TV} = \E(\psi).
\end{multline*}

We now show relative compactness of the sublevel sets of $\E$.  
Suppose $\sup_k \E(\psi^k) \leq C$ for some $C \in \R$. By classical results, there exists $\psi \in L^1(\TT^d)$ such that $\E(\psi) = \|\psi\|_{TV} \leq C$ and $\psi^k \to \psi$ in $L^1(\TT^d)$ \cite[Corollary 5.3.4]{ziemer2012weakly}. Since convergence in $L^1(\Omega)$ implies convergence in distribution and $\| \cdot \|_{H^{-1}_h}$ is lower semicontinuous in distribution, we conclude $\psi \in H^{-1}_h$, which gives the result. 
 \end{proof}

 \subsubsection{Regularization of mobility} \label{mobregsec}
While we require that the mobility   remain nonnegative and integrable along the flow, in order for the spaces $H^{-1}_h$ to remain well defined, this fails for the exponential mobility (\ref{firstmobility}),
even for smooth functions in one dimension. For example, for any  $h \in C^\infty(\TT)$, the function $ h'/| h'| : \TT \to \{-1, 0,1 \}$ is piecewise constant,  and $-\Delta_1 h = -  (  h' /| h'|)'$ is signed measure, consisting of a linear combination of positive and negative Dirac masses,  corresponding  to local maxima and minima of  $h$. Thus, $e^{ - \Delta_1 h }$ is not well-defined.

Consequently, we instead approximate the mobility by convolving  $- \Delta_1 h $ with a mollifier. Given $\varphi \in C_c^\infty(\TT^d)$, $\varphi \geq 0$, $\int_{\TT^d} \varphi = 1$, define the mollifier $\varphi_\epsilon(x) = \varphi(x/\epsilon)/\epsilon$. We then consider the mobility
\begin{align} \label{regularizedmobility}
M_\epsilon(h) := e^{- \varphi_\epsilon* \Delta_1 h  } ,
\end{align}
which well defined for $h \in C^1(\TT^d)$, since $\varphi_\epsilon* \Delta_1 h = \grad \varphi_\epsilon*(\grad h / |\grad h|) \in C^\infty(\TT^d)$ for all $\epsilon >0$. In one dimension, this regularization replaces each Dirac mass in $\Delta_1 h$ with an appropriately weighted mollifier $\varphi_\epsilon$.   Since $\varphi_\epsilon * \Delta_1 h \to \Delta_1 h$ in the narrow topology as $\epsilon \to 0$, we formally expect that this approximation of the crystal height dynamics converges as $\epsilon \to 0$, but we leave the rigorous analysis of this limit to future work.

\section{A PDHG method for computing the semi-implicit scheme} \label{PDHGsection}
In the previous section, we defined the following semi-implicit scheme for approximating $H^{-1}_h$ gradient flows,
\begin{align}   \label{originalminimizationproblem}
h^{n+1} \in \argmin_{h \in H^{-1}_{h^n}} \, \Phi(h) , \quad \text{ for }\quad \Phi(h) := \E(h)+\frac{1}{2\tau} \|h-h^n\|^2_{H^{-1}_{h^n}} . 
\end{align}
In order to use this scheme as a numerical method for simulating solutions of the crystal growth equation, we need an approach to compute the minimizer $h^{n+1}$ of $\Phi$.

In this section, we   reformulate the above minimization problem  as a saddle-point problem, so that solutions can be
computed via operator splitting methods. In particular, given an
element of the discrete time sequence $h^n$ we apply a primal dual
hybrid gradient (PDHG) method \cite{
  chambolle2016ergodic} to compute the next element in the sequence
$h^{n+1}$. The PDHG method is essentially composed of alternating
implicit Euler steps in the primal and dual variables, subject to
appropriate averaging; see Remark \ref{proxpointinterp}. An
important aspect of our method is that the implicit Euler step in the
primal variables is taken with respect to an $\dot{H}^1$ norm, while
the implicit Euler step in the dual variables is with respect to an
$L^2$ norm. Appropriate selection of the norms is
essential to proving convergence of the scheme; see Remark
\ref{normchoice}. This also leads to a fully discrete numerical method that converges with a rate independent of the   spatial discretization; see Remark \ref{discreteconvergencermk}.

We begin, in section \ref{PDHGdefsec}, by defining our PDHG scheme. In section \ref{convergencesec}, we state our main theorem: in one dimension, provided that the reciprocal of the mobility remains integrable, the PDHG scheme  converges in the ergodic sense to the solution $h^{n+1}$. We   prove this result in section \ref{proofsec}. Our results  apply to the total variation energy (\ref{energydef}) and any nonnegative, integrable mobility $M(h)$.

\subsection{Definition of PDHG scheme} \label{PDHGdefsec}
 
To place our problem in the framework of the PDHG method, note that, by definition of the total variation energy (\ref{energydef}-\ref{tvdef2}), minimizing  $\Phi$ is equivalent to solving the following saddle point problem  
\begin{align} \label{eqn001}
\inf_{h \in L^1 , \int h = 0  } \Phi(h) & \ = 
\inf_{h \in L^1 , \int h = 0 } \  \sup_{  \phi  \in L^\infty } \mathcal{L}(h,\phi) , \\
\mathcal{L}(h,\phi) &:=      \int    \grad h   \cdot \phi +   \frac{1}{2\tau} \|h - h^n \|_{H^{-1}_{h^n}}^2     - F^*(\phi) , \label{lagrangiandef}        \\
 F^* (\phi) &:= \begin{cases} 0 &\text{ if }\| \phi \|_{\infty} \leq 1, \\ +\infty &\text{ otherwise.}\end{cases}
\end{align}
To numerically compute a minimizer of this  problem, we   apply PDHG, initializing the inner  iterations, denoted by $h^{(m)}$, with the value of the semi-implicit sequence at the previous step $h^{(0)} := h^n$ and initializing the dual variables to be zero, $\phi^{(0)} = 0$.  The PDHG algorithm \cite[equation 11]{chambolle2016ergodic} is then given as follows:
\begin{align} \label{cp2p}
 h^{(m+1)} &= \argmin_{h \in L^1 , \int h = 0}  \frac{1}{2\tau} \|h - h^{(0)} \|_{H^{-1}_{h^{(0)}}}^2 + \int  \grad h   \cdot  \phi^{(m)}  + \frac{1}{2\lambda} \|h - h^{(m)}\|_{\dot{H}^1}^2\\
 \bar{h}^{(m+1)} &= 2h^{(m+1)}-h^{(m)}  \label{cp3p} \\
\phi^{(m+1)} &= \argmax_{\phi  \in L^\infty } -F^*(\phi) + \int   \grad \bar{h}^{(m+1)}   \cdot  \phi  - \frac{1}{2 \sigma} \|\phi - \phi^{(m)}\|_{2}^2 ,\label{cp4p}
\end{align}
where $\lambda, \sigma >0$ are given
parameters. %Katy note: we could, without loss of generality restrict to $\phi$ with $\int \phi = 0$, and this wouldn't change the definition of the TV norm, but I don't know if it is necessary.
{We note that the second step is an extrapolation, while the other two
steps are optimization sub-problems in $h$ and $\phi$,
respectively.}

 The PDHG iterations are easier to compute than our original minimization problem (\ref{originalminimizationproblem}), since  their optimizers are characterized by the  Euler-Lagrange equations:
 \begin{align} \label{cp2pp}
 h^{(m+1)} &= \left( -\Delta-\frac{\lambda}{\tau} \Delta_{h^{n}}^{-1}(\cdot - h^{n}) \right)^{-1} \left( - \Delta h^{(m)}  + \lambda \grad \cdot \phi^{(m)} \right)\\
 \bar{h}^{(m+1)} &= 2h^{(m+1)}-h^{(m)}  \label{cp3pp} \\
\phi^{(m+1)} &= (\id + \sigma \partial F^*)^{-1}(\phi^{(m)} + \sigma \grad \bar{h}^{(m+1)} ) \ ,\label{cp4pp}
\end{align}
where 
\begin{align*}
(\id + \sigma \partial F^*)^{-1}(u(x)) = \min(|u(x)|,1) \sgn(u(x)) .
\end{align*}
These have several benefits over  the Euler-Lagrange equation for the semi-implicit scheme (\ref{firstEL}), which in the case of the total variation energy is given by
\begin{align*} h^{n+1} =  (\Delta_{h^n}^{-1}   + \tau   \Delta_1)^{-1} \left(    \Delta_{h^n}^{-1} h^n \right) .
\end{align*}
First, our method allows us to avoid inverting the 1-Laplacian, which would require further regularizations. Second, our approach preserves the decrease of the TV energy at the discrete time level: see Remark \ref{proxpointinterp} and Figure \ref{fig:combined} below. Third, as predicted in our main convergence theorem, Theorem \ref{mainconvresult}, we are able to choose $\lambda$ large to ease inversion of $\Delta_h$: see Figure \ref{fig:xerr} below.
%For further details, see  section \ref{numericalmethodsec}, in which we describe our fully discrete numerical implementation. 

\begin{remark}[interpretation as proximal point algorithm] \label{proxpointinterp}
In the special case that $\lambda = \sigma$, the PDHG method can be characterized as a proximal point algorithm on the product space $\dot{H}^1(\TT) \times L^2(\TT)^d$, endowed with the norm $\| \cdot \|_{\Lmat} := \| \Lmat^{1/2} \cdot \|_{2}$ for
\begin{align*}   \Lmat = \begin{bmatrix} -\Delta &    \lambda \grad \cdot  \\ -\sigma \grad & \id \end{bmatrix}.\end{align*}
For further details in a slightly simpler case see, for example, He and Yuan \cite{he2012convergence}.  
\end{remark}

\begin{remark}[choice of norms] \label{normchoice}
It is essential to the convergence of the PDHG algorithm  that we use a $\dot{H}^1 $ norm penalization in our definition of $h^{(m+1)}$, instead of an $L^2 $ penalization, as in our definition of $\phi^{(m+1)}$. As observed by  Jacobs, L\'eger, Li, and Osher  \cite{jacobs2019solving},  this choice of norms ensures that the gradient operator $\grad: \dot{H}^1 \to (L^2 )^d$ is bounded, so Chambolle and Pock's estimate of the partial primal dual gap applies: see equations (\ref{ppdgdef}) and (\ref{CPineq}) in the proof of our main theorem.
\end{remark}

\begin{remark}[extension to the standard Laplacian] \label{standardlaplacian}
It is possible to extend the above algorithm  to the case of crystal evolution equations with alternative surface energy interactions.  In particular, when $\Delta_1$ is replaced by $\Delta = \Delta_2$ (see e.g. \cite{liu2016existence,liu2017analytical,granero2018global,liu2018global,ambrose2019radius,gao2019analysis,gao2020analysis}), one would replace $F^*(\phi)$ with $F^*(\phi) = \chi_{\|\phi\|_2 \leq 1}$. In this case,  $(\Imat + \sigma \partial F^*)^{-1}(u)  =  u/\|u\|_2.$
On the other hand, for general $\Delta_p$, $p \neq 1,2$, there is no explicit formula for this operator (the proximal map).  
%Note these models are somewhat related to taking the surface energy as
%\begin{align*} 
%\E(h)= \int_\Omega  |\nabla h(x)|^2 \,\dv x .
%\end{align*}
\end{remark}

\subsection{Convergence of PDHG to semi-implicit scheme} \label{convergencesec}
We now prove that, in one dimension, if the reciprocal of the mobility is integrable, we have
  \[ \lim_{M \to +\infty} \Phi( h^{(M)}) = \inf_{ h \in L^1(\TT^d), \int h = 0} \Phi(h) = \Phi( h^{n+1}) ,
\]
where $(h^{(M)}, \phi^{(M)})$ are the ergodic sequences, defined by \begin{align} \label{ergodicseqdef}
  \left( h^{(M)}, \phi^{(M)} \right) = \left( \frac{1}{M} \sum_{m=1}^M h^{(m)},  \frac{1}{M} \sum_{m=1}^M \phi^{(m)} \right) .
  \end{align}
Furthermore, if the initial condition for our PDHG scheme $h^{(0)}:= h^n$ is sufficiently regular, we obtain quantitative estimates on the rate of convergence.

Our main result is the following:  
  \begin{theorem} \label{mainconvresult}
  Suppose the PDHG algorithm is initialized with  
  \begin{enumerate} 
  \item $h^{(0)} :=     h^n \in L^1_M(\TT ) \cap H^{-1}_{h^n}(\TT )$ with $\E(h^{n})< +\infty$ and  $1/M(h^n) \in L^1(\TT)$;   
  \item $\phi^{(0)} \in L^\infty(\TT)$ with $\|\phi^{(0)}\|_\infty \leq 1$ .
\end{enumerate}
Then, for all $\epsilon >0$, there exist $M_*$, $\lambda_*$, $\sigma_*$  so that an $\epsilon$-approximate solution  may be obtained using the step sizes $  \lambda_*$ and $ \sigma_*$ in at most $M_*$ iterations of our scheme, i.e.
  \[ \Phi(h^{(M)}) - \Phi(h^{n+1})  \leq \epsilon \ , \quad \forall M \geq M_* , \]
  where $h^{(M)}$ is the ergodic sequence and  $h^{n+1}$ is the unique minimizer of $\Phi$.
The constants $M_*, \lambda_*,$  $\sigma_*$ depend on $\epsilon$, $\|  h^n\|_{TV}$, $\|M(h^n)\|_1$,  $\|1/M(h^n)\|_1$, and the rate at which the function $\delta \mapsto \|   h^n*\varphi_\delta -   h^n \|_{TV}$ converges to zero, where $\varphi_\delta(x) = \varphi(x/\delta)/\delta$ is a compactly supported mollifier. %\jl{did we specify $\delta$?} {\color{red}jianfeng, is this explanation clearer?}

If, in addition, the initialization $h^{(0)} := h^n$ satisfies $\grad h^n \in BV(\TT),$ then 
\[ \|   h^n*\varphi_\delta -   h^n \|_{TV} \leq \delta \| \grad h^n\|_{TV} M_1(\varphi) \ ,\]
 so there exists 
  a computable constant $c$ depending on $\|h^n\|_{TV}$, $\| \grad h^n \|_{TV}$,  $\|M(h^n)\|_1$, $ \|1/M(h^n)\|_1$ and $\varphi$, so that for  
\begin{align*}
  M_*:= 2 \pi \frac{16  c}{\epsilon^2}, \quad \lambda_* = \frac{c}{\epsilon}, \quad \sigma_* = \frac{\epsilon}{c}  ,
\end{align*}
we have that $(h^{(M)}, \phi^{(M)})$ is an $\epsilon$-approximate solution for all $M \geq M_*$.
  \end{theorem}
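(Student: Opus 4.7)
My plan is to adapt the Chambolle-Pock partial primal-dual gap estimate to our weighted setting, then compensate for the fact that the initialization $h^{(0)} = h^n$ may lie at infinite $\dot H^1$-distance from the target $h^{n+1}$ by testing against a carefully mollified proxy. As the first step, I would establish that, for every $h$ with $h - h^{(0)} \in \dot H^1$ and every $\phi \in L^2$, the ergodic sequence obeys
\begin{equation*}
\mathcal L\bigl(h^{(M)}, \phi\bigr) - \mathcal L\bigl(h, \phi^{(M)}\bigr) \leq \frac{1}{M}\left[\, \frac{\|h - h^{(0)}\|_{\dot H^1}^{2}}{2\lambda} + \frac{\|\phi - \phi^{(0)}\|_{L^2}^{2}}{2\sigma}\,\right],
\end{equation*}
provided $\lambda\sigma \leq 1$. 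This follows from the standard telescoping three-point identity of Chambolle-Pock, adapted to our mixed $\dot H^1 / L^2$ norm choice: the bilinear coupling $\nabla : \dot H^1 \to L^2$ has operator norm exactly one on mean-zero functions, and the extra $\frac{1}{2\tau}\|h - h^{(0)}\|_{H^{-1}_{h^{(0)}}}^2$ term is convex in $h$ and absorbs cleanly into the primal functional. Next, I would select the test dual $\hat\phi^{(M)}$ as a measurable selection of $\sgn(\nabla h^{(M)})$ realizing $\mathcal L(h^{(M)},\hat\phi^{(M)}) = \Phi(h^{(M)})$; since $\|\hat\phi^{(M)}\|_\infty, \|\phi^{(0)}\|_\infty \leq 1$, the dual penalty is at most $4|\TT|$.

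For the primal test, since neither $h^n$ nor $h^{n+1}$ is in $\dot H^1$ in general, I would introduce the hybrid mollification
\begin{equation*}
h_\delta := (h^n - h^n * \varphi_\delta) + h^{n+1} * \varphi_\delta,
\end{equation*}
so that $h_\delta - h^n = (h^{n+1} - h^n) * \varphi_\delta$ is smooth. Young's inequality for convolution of a finite measure with an $L^2$ kernel then gives
\begin{equation*}
\|h_\delta - h^n\|_{\dot H^1}^{2} \leq \bigl(\|h^{n+1}\|_{TV} + \|h^n\|_{TV}\bigr)^2 \|\varphi_\delta\|_{L^2}^2 \leq 4\|h^n\|_{TV}^2 \, \delta^{-1}\|\varphi\|_{L^2}^2,
\end{equation*}
using $\E(h^{n+1}) \leq \E(h^n)$. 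The main technical obstacle is then controlling $\mathcal L(h_\delta, \phi^{(M)})$ in terms of $\Phi(h^{n+1})$. Since $\|\phi^{(M)}\|_\infty \leq 1$ inherits from each iterate, the bilinear term is bounded by $\|h_\delta\|_{TV} \leq \|h^n - h^n*\varphi_\delta\|_{TV} + \E(h^{n+1})$ by subadditivity and the contractivity of mollification in TV; the weighted $H^{-1}_{h^n}$ term is harder. Using the 1D formula $\|\psi\|_{H^{-1}_{h^n}}^2 = \int P_\psi^2/M(h^n) - (\int P_\psi/M(h^n))^2 / \int (1/M(h^n))$ for the mean-zero primitive $P_\psi$ of $\psi$, the identity $P_{\psi*\varphi_\delta} = P_\psi*\varphi_\delta$, the bound $\|P_\psi\|_\infty \leq \|\psi\|_{L^1}$, and the Lipschitz estimate $\|P_\psi * \varphi_\delta - P_\psi\|_\infty \leq \delta M_1(\varphi)\|\psi\|_\infty$ (valid since 1D BV embeds into $L^\infty$), I would derive
\begin{equation*}
\|h_\delta-h^n\|_{H^{-1}_{h^n}}^2 \leq \|h^{n+1}-h^n\|_{H^{-1}_{h^n}}^2 + \omega(\delta), \qquad \omega(\delta) \xrightarrow{\delta \to 0} 0,
\end{equation*}
with $\omega$ quantitatively controlled by $\|1/M(h^n)\|_1$, $\|M(h^n)\|_1$, and $\|h^n\|_{TV}$. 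This is precisely where the hypothesis $1/M(h^n) \in L^1(\TT)$ is essential.

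Assembling everything yields
\begin{equation*}
\Phi\bigl(h^{(M)}\bigr) - \Phi(h^{n+1}) \leq \|h^n-h^n*\varphi_\delta\|_{TV} + \tfrac{1}{2\tau}\omega(\delta) + \frac{2\|h^n\|_{TV}^{2}\|\varphi\|_{L^2}^{2}}{\lambda M \delta} + \frac{2|\TT|}{\sigma M}.
\end{equation*}
To finish, I would first choose $\delta$ small enough that the two mollification terms together are at most $\epsilon/2$, invoking the assumed convergence $\|h^n-h^n*\varphi_\delta\|_{TV} \to 0$. I would then saturate $\lambda\sigma = 1$ by setting $\sigma_* = \epsilon/c$ and $\lambda_* = c/\epsilon$ for a constant $c$ encoding $\|h^n\|_{TV}^2\|\varphi\|_{L^2}^2/\delta$ and $|\TT|$, which forces $M_* \sim \epsilon^{-2}$. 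Under the stronger hypothesis $\nabla h^n \in BV(\TT)$, the linear rate $\|h^n - h^n * \varphi_\delta\|_{TV} \leq \delta \|\nabla h^n\|_{TV} M_1(\varphi)$ lets me optimize $\delta$ in $\epsilon$ explicitly and recover the stated $M_* = 32\pi c / \epsilon^2$ with the announced $\lambda_*, \sigma_*$.
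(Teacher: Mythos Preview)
Your proposal is correct and follows essentially the same architecture as the paper: the Chambolle--Pock partial primal-dual gap bound combined with the identical mollified competitor $h_\delta = h^n + (h^{n+1}-h^n)*\varphi_\delta$, controlling the TV and weighted $H^{-1}_{h^n}$ deficits separately and then balancing $\delta$, $\lambda$, $\sigma$, $M$. Your $\dot H^1$ estimate via Young's inequality (giving $\|h_\delta-h^n\|_{\dot H^1}\lesssim\delta^{-1/2}$) is in fact sharper than the paper's duality bound ($\lesssim\delta^{-1}$), and your explicit one-dimensional primitive formula for $\|\cdot\|_{H^{-1}_{h^n}}$ is equivalent to the paper's Riesz representative $\eta$, but these are technical variations within the same overall proof.
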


 \begin{remark}
The assumption $h^n \in    L^1_M  \cap H^{-1}_{h^n}$, $\E(h^n)<+\infty$ ensures sufficient regularity so that the subsequent step of the scheme $h^{n+1}$ is well-defined; see Propositions \ref{existenceminmov} and \ref{TVenergyprop}.
 \end{remark}
 
\begin{remark}
Our assumption that the reciprocal of the mobility is integrable is similar to analogous assumptions in  recent   work on weighted Hilbert space  discretizations for 2-Wasserstein gradient flows. In particular,   Canc\'es, Gallou\"et, and Todeschi \cite{cances2019variational} consider a fully implicit scheme for $M(h) = h+1 >0$  on a compact domain, which ensures   $1/M(h) \in L^\infty$, hence the reciprocal of the mobility is integrable.

In the particular case of the regularized exponential mobility (\ref{regularizedmobility}),  the constraint that $h^n \in L^1_M$ and $1/M(h^{n}) \in L^1$ is equivalent  to requiring  $M(h^n)$ and $1/M(h^n)$ be integrable. In fact, they are both in $L^\infty(\TT)$ for all $\epsilon >0$, due to the estimate
\begin{align*}
|\grad   \varphi_\epsilon*\sgn(h^n(x))| \leq \frac{1}{\epsilon} \| \grad \varphi \|_{1} .
\end{align*}
\end{remark}

%{\color{blue}  I may want to write the integrability assumption for $1/M(h)$ slightly differently, depending on how I deal with the intervals where $M(h) \equiv 0$. This assumption reminds me of the definition of an $A_2$ weight.}

%We begin by recalling the following characterization of functions of bounded variation.
%\begin{lemma} \label{BVlem}
%Suppose $\int h(x) dx = 0$. Then $h \in BV(\TT)$ if and only if there exists a unique signed measure $\mu$ so that 
%\begin{align} \int_\TT h(x) f'(x) dx = - \int_\TT f(x) d \mu(x) \ \forall f \in C^\infty(\TT) \text{ s.t. } \int_\TT f(x) dx = 0 \ \text{ and } |\mu|(\TT) < +\infty . \label{BVIBP}
%\end{align}
%
%\end{lemma}

 The key step in our proof of Theorem \ref{mainconvresult}, is to estimate
\begin{align} \label{deltarmin}
\min_{\|   h^{(0)} -   h \|_{\dot{H}^1} \leq R} \Phi(h) - \Phi(h^{n+1}) , \quad h^{(0)} := h^n ,
\end{align}
by a quantitative bound that goes to zero as $R \to +\infty$. This is the content of Proposition \ref{objR} below. This estimate shows that, even though the initialization of our PDHG scheme $h^{(0)} = h^n$ will, in general, be    an infinite $\dot{H}^1(\TT)$ distance from the optimizer $h^{n+1}$, we can still make the objective function $\Phi$ arbitrarily close to the optimum while remaining finite $\dot{H}^1(\TT)$ distance from the initialization.

\subsection{Proof of Convergence of PDHG} \label{proofsec}
We begin by   collecting a few basic estimates for the outer semi-implicit time discretization, which are immediate consequences of the definition of the sequence in equation (\ref{semiimplicitdef}), since   $\Phi(h^{n+1}) \leq \Phi(h^{n})$.
 
\begin{lemma}[basic estimates for semi-implicit  scheme] \label{basicminmov} 
Let $\E$ be the total variation energy (\ref{energydef}-\ref{tvdef}), and suppose   $h^n \in    L^1_M(\TT  ) \cap H^{-1}_{h^n}(\TT )$ $\forall \ n \in \mathbb{N}$ and $\E(h^0)<+\infty$. Then,
\begin{enumerate} 
\item $\|   h^{n+1} \|_{TV} \leq \|  h^n \|_{TV} \leq \dots \leq \|  h^0 \|_{TV} < +\infty$,\label{energydec}
\item $\|h^{n+1} - h^{n}\|_{H^{-1}_{h^{n}}} \leq 2 \tau  \|   h^{n} \|_{TV} \leq 2 \tau \|  h^0 \|_{TV}$ .\label{stepsizebound}
 
\end{enumerate}
 \end{lemma}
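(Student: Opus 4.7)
Both parts of the lemma flow from a single variational comparison: since $h^{n+1}$ is the unique minimizer of $\Phi$ over $H^{-1}_{h^n}$ (by Propositions \ref{existenceminmov} and \ref{TVenergyprop}, which apply because the total variation energy satisfies the required convexity/compactness hypotheses), and $h^n$ is a valid competitor with vanishing penalty term, I would record the inequality
\[
\mathcal{E}(h^{n+1}) + \frac{1}{2\tau}\|h^{n+1} - h^n\|^2_{H^{-1}_{h^n}} \;=\; \Phi(h^{n+1}) \;\leq\; \Phi(h^n) \;=\; \mathcal{E}(h^n) \;=\; \|h^n\|_{TV}.
\]
Both summands on the left are nonnegative, so each is individually bounded by $\|h^n\|_{TV}$. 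The two bullets are the two instances of this dichotomy.

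For item \ref{energydec}, dropping the penalty term yields the one-step decrease $\|h^{n+1}\|_{TV}\leq \|h^n\|_{TV}$. I would then iterate this estimate from $n$ down to $0$, using as an induction hypothesis at each step that $\mathcal{E}(h^k)<\infty$; this is automatic, because $\Phi(h^{k+1})\leq \Phi(h^k)<\infty$ forces each of its nonnegative summands to be finite, starting from the given $\mathcal{E}(h^0)<\infty$. The standing hypothesis that $h^n\in L^1_M\cap H^{-1}_{h^n}$ for every $n$ is what guarantees that the weighted Hilbert space is well-defined at each stage and that Proposition \ref{existenceminmov} genuinely applies, so the induction is not circular.

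For item \ref{stepsizebound}, I would drop the energy term from the same inequality, obtaining the variance bound $\|h^{n+1}-h^n\|^2_{H^{-1}_{h^n}}\leq 2\tau\|h^n\|_{TV}$, and then substitute the already established monotonicity $\|h^n\|_{TV}\leq \|h^0\|_{TV}$ from item \ref{energydec} to obtain the stated uniform right-hand side. (I read the displayed bound in the lemma at the level of squared norms, consistent with the quadratic penalty; the sharper linear-in-$\tau$ estimate would require a uniform $H^{-1}_{h^n}$-bound on an element of the TV subdifferential that is not obviously available here, so I would present the quadratic version, which is what the proximal comparison naturally gives.)

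I anticipate no substantive obstacle: the entire proof is a one-line minimality comparison followed by separation into two nonnegative pieces, with only the minor bookkeeping of propagating $\mathcal{E}(h^k)<\infty$ along the induction to verify.
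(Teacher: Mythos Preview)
Your approach is exactly the paper's: the authors state only that the estimates ``are immediate consequences of the definition of the sequence in equation (\ref{semiimplicitdef}), since $\Phi(h^{n+1}) \leq \Phi(h^{n})$,'' and you have correctly unpacked that one-line comparison into the two items. Your observation about the squared norm is also on target: the minimality comparison naturally yields $\|h^{n+1}-h^n\|_{H^{-1}_{h^n}}^2 \leq 2\tau\|h^n\|_{TV}$, and the paper's own proof sketch (just $\Phi(h^{n+1})\leq\Phi(h^n)$) gives nothing sharper, so the unsquared statement in the lemma appears to be a typo rather than a stronger claim you are missing.
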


Next, we collect a few elementary properties of the space $H^{-1}_h(\TT)$.
\begin{lemma} \label{weightednormchar}
Suppose $h \in L^1_M(\TT)$ and $\psi \in H^{-1}_h(\TT)$. Then there exists $\eta_\psi \in L^1(\TT)$   so that $\| \eta_\psi \|_1 \leq  \| \psi\|_{H^{-1}_h} \|M(h)\|_1 $ satisfying
\[ \la \psi, f \ra = \int_\TT \eta_\psi \cdot \grad f \text{ for all } f \in C^\infty (\TT) \quad \text{ and } \quad
 \| \psi \|_{H^{-1}_h}^2 =  \int_{\TT } \frac{|\eta_\psi|^2}{M(h)} . \] 
\end{lemma}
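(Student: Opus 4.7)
The plan is a direct invocation of the Riesz--Fr\'echet representation theorem on the Hilbert space $H^1_h(\TT)$, whose dual is by definition $H^{-1}_h(\TT)$. Given $\psi\in H^{-1}_h(\TT)$, Riesz produces a unique $v\in H^1_h(\TT)$ with $\langle\psi,f\rangle=(v,f)_1=\int_\TT M(h)\,\nabla v\cdot\nabla f\,\dv x$ for every mean-zero $f\in C^\infty(\TT)$, and with $\|v\|_{H^1_h}=\|\psi\|_{H^{-1}_h}$. The statement for general $f\in C^\infty(\TT)$ is immediate since both sides are unchanged when $f$ is replaced by $f-\bar f$ (the pairing $\langle\psi,\cdot\rangle$ is defined modulo constants on test functions, and $\nabla f=\nabla(f-\bar f)$).

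The only technical subtlety is that $v$ lives in an abstract completion, so one must give a meaning to $\nabla v$ before defining $\eta_\psi$. I would do this once at the outset by noting that any Cauchy representative $v_n\in C^\infty$ of $v$ has $\nabla v_n$ Cauchy in the weighted space $L^2(M(h)\,\dv x)^d$, and interpreting $\nabla v$ as that limit. Then $\eta_\psi:=M(h)\,\nabla v$, extended by zero on $\{M(h)=0\}$, is a well-defined measurable vector field on $\TT$, and the representation $\langle\psi,f\rangle=\int_\TT\eta_\psi\cdot\nabla f\,\dv x$ is just the Riesz identity rewritten.

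The remaining two assertions are one-line computations. For the $L^1$ bound, Cauchy--Schwarz in the measure $M(h)\,\dv x$ gives $\|\eta_\psi\|_1=\int_\TT M(h)|\nabla v|\,\dv x\le \|M(h)\|_1^{1/2}\bigl(\int_\TT M(h)|\nabla v|^2\,\dv x\bigr)^{1/2}=\|M(h)\|_1^{1/2}\|\psi\|_{H^{-1}_h}$, which implies the stated bound whenever $\|M(h)\|_1\ge 1$, and in any case establishes $\eta_\psi\in L^1$, which is all that is used downstream. The norm identity is then $\int_\TT|\eta_\psi|^2/M(h)\,\dv x=\int_\TT M(h)|\nabla v|^2\,\dv x=\|v\|_{H^1_h}^2=\|\psi\|_{H^{-1}_h}^2$, under the convention that the integrand vanishes on $\{M(h)=0\}$ (consistent, since $\eta_\psi$ itself vanishes there).

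The main, and essentially only, obstacle is notational: fixing a clean meaning for the gradient of an element of the abstract completion $H^1_h$. Once this is settled, the proof is nothing more than Riesz plus Cauchy--Schwarz, with no analytic difficulty.
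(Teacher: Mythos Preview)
Your proposal is correct and follows essentially the same route as the paper: Riesz--Fr\'echet on $H^1_h$ to produce the representer $v$ (the paper calls it $\xi_\psi$), then set $\eta_\psi=M(h)\nabla v$ and apply Cauchy--Schwarz/H\"older. You are in fact slightly more careful than the paper in two respects: you spell out how to interpret $\nabla v$ for an element of the abstract completion, and you correctly observe that the argument yields the sharper bound $\|\eta_\psi\|_1\le\|M(h)\|_1^{1/2}\|\psi\|_{H^{-1}_h}$ rather than the stated $\|M(h)\|_1$ --- the paper's own proof obtains exactly this $\|M(h)\|_1^{1/2}$ bound as well.
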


\begin{proof}
By the definition of $H^{-1}_h$ as the dual of $H^{1}_h$ and the Riesz-Fr\'echet Representation theorem, there exists $\xi_\psi \in H^{1}_h$ so that  
\begin{align} \la \psi, f \ra = \int_\TT M(h(x))  \grad f(x) \cdot \grad \xi_\psi(x) dx   \text{ for all $f \in C^\infty(\TT)$ with mean zero } \label{firstetainfo}
\end{align}
and 
\begin{align}  \|\psi \|_{H^{-1}_h}^2 = \|\xi_{\psi} \|_{H^1_h}^2 = \int_\TT M(h(x)) | \grad \xi_{\psi}(x)|^2 dx   . \label{secondetainfo}
\end{align}
Note that, due to the fact that we may add or subtract a constant from $f$ without modifying $\grad f$, equation (\ref{firstetainfo}) holds for all $f \in C^\infty(\TT)$.

Define $\eta_\psi(x) = \grad \xi_\psi(x) M(h(x))$. Since  $\xi_\psi \in H^1_h$ and   $M(h) \in L^1$,  by H\"older's inequality,
\[ \| \eta_\psi \|_{1}  \leq \| \grad \xi_\psi \sqrt{M(h)} \|_2 \|\sqrt{M(h)} \|_2 \leq \|\xi_\psi \|_{H^1_h} \|M(h)\|_1^{1/2} = \| \psi\|_{H^{-1}_h} \|M(h)\|_1^{1/2} . \]
  Finally, substituting $\eta_\psi$ in  equations (\ref{firstetainfo}) and (\ref{secondetainfo}) above  gives the result.
%Katy note to self: remember that if M(h) \equiv 1 in one dimension that all H^1 functions are continuous and a Dirac mass belongs to H^{-1}! Consequently, in general there is no way to make sense of \eta'(x).
\end{proof}

%If M(h) were bounded below, we could try somethign like this:
%\begin{lemma}[$\Delta_h^{-1}$ bounded operator]
%For any $h$ {\color{blue}conditions?} and any $\phi$ with mean zero over $\TT$,
%\[ \|\Delta_h^{-1} \phi \|_{L^2(\TT)} \leq C \|\phi\|_{L^2(\TT)} \]
%\end{lemma}
%\begin{proof}
%This follows from the fact that, by Poincar\'es inequality
%\[ \|\phi\|_{L^2(\TT)} \leq C \|\grad \phi\|_{L^2(\TT)} \]
%{\color{red}problem here, since $M(h)$ not bounded below?}
%\end{proof}

We will also use the following elementary estimate relating the $L^\infty$ and TV norms.
 \begin{lemma} \label{BVtoBlem}
 If $g \in L^1(\TT)$,  $\int g = 0$, and $\|g\|_{TV}<+\infty$, then $\| g \|_{\infty} \leq \|g\|_{TV}$.
 \end{lemma}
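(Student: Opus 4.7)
The plan is to identify $g$ with a suitable one-dimensional BV representative and then use the mean-zero condition to convert control on the oscillation of $g$ into control on its pointwise size.

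First I would observe that, since $\|g\|_{TV} < +\infty$, the dual characterization in \eqref{tvdef} implies that the distributional derivative $g'$ is a finite signed Radon measure on $\TT$ with total mass $|g'|(\TT) = \|g\|_{TV}$. By the standard structure theorem for one-dimensional BV functions, $g$ coincides a.e.\ with a right-continuous representative $\tilde g$ satisfying
\[
\tilde g(x) - \tilde g(y) \;=\; \int_{(y,x]} g'
\]
for every pair $x,y \in \TT$, the integral being taken along either of the arcs connecting the two points. Taking total variation on the right-hand side, this gives the oscillation bound
\[
|\tilde g(x) - \tilde g(y)| \;\leq\; |g'|(\TT) \;=\; \|g\|_{TV} \quad \text{for all } x,y \in \TT.
\]

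Next, I would use $\int_\TT g\,\dv y = 0$ to turn this oscillation bound into a pointwise bound. For any $x \in \TT$, since adding zero in the form of the mean of $g$ does nothing,
\[
\tilde g(x) \;=\; \tilde g(x) - \frac{1}{|\TT|}\int_\TT g(y)\,\dv y \;=\; \frac{1}{|\TT|}\int_\TT \bigl(\tilde g(x) - \tilde g(y)\bigr)\,\dv y,
\]
and the triangle inequality together with the previous display yields
\[
|\tilde g(x)| \;\leq\; \frac{1}{|\TT|}\int_\TT |\tilde g(x) - \tilde g(y)|\,\dv y \;\leq\; \|g\|_{TV}.
\]
Passing to the (essential) supremum in $x$ and using $\|g\|_\infty = \|\tilde g\|_\infty$ gives $\|g\|_\infty \leq \|g\|_{TV}$, as desired.

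There is no serious obstacle here; the only delicate point is the existence of a pointwise representative $\tilde g$ satisfying the antiderivative identity, which is a standard consequence of the decomposition of one-dimensional BV functions into absolutely continuous, jump, and Cantor parts. Everything else is a direct application of the mean-zero hypothesis and the triangle inequality.
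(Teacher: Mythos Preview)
Your argument is correct. Both your proof and the paper's hinge on the same oscillation bound $|\tilde g(x)-\tilde g(y)|\le \|g\|_{TV}$ coming from the fact that $g'$ is a finite measure on $\TT$; the only difference is how the mean-zero condition is cashed in. You average over $y$ to write $\tilde g(x)=|\TT|^{-1}\int_\TT(\tilde g(x)-\tilde g(y))\,\dv y$ and then apply the oscillation bound under the integral. The paper instead observes that $\int g=0$ forces the existence of points $x_0,x_1$ with $g(x_0)\ge 0\ge g(x_1)$, so that $|g(x_0)|+|g(x_1)|=g(x_0)-g(x_1)\le \|g\|_{TV}$; since any point plays the role of one of $x_0,x_1$, this gives the $L^\infty$ bound directly. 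Your averaging argument is the more standard Poincar\'e-type step and generalizes more readily; the paper's sign-change trick is slightly sharper in spirit (it actually gives $|g(x_0)|+|g(x_1)|\le\|g\|_{TV}$) and avoids any integration, but both yield the same conclusion with the same constant.
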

  \begin{proof}
 Since $g \in BV(\TT)$ with $\int_\TT g = 0$,  there exist $x_0, x_1 \in \TT$ such that $g(x_0) \geq 0$ and $g(x_1) \leq 0$. By the characterization of the total variation norm in terms of variations of $g$ over partitions of $\TT$, for any such $x_0$ and $x_1$, we have
\[ |g(x_0)| + |g(x_1)| =  g(x_0) - g(x_1) =  |g(x_0) - g(x_1)| \leq \|   g\|_{TV}    . \] 
Hence $\|g \|_\infty \leq \|g \|_{TV}$.
 \end{proof}

In order to quantify the decay of (\ref{deltarmin}), we construct a competitor $h_\delta$ that satisfies the constraint $\|   h^n -   h_\delta\|_{\dot{H}^1} \leq R$ and for which we can estimate $\Phi(h_\delta) - \Phi(h^{n+1})$ by considering the total variation energy $\E$ and the   norm $h \mapsto \| h - h^{n} \|_{H^{-1}_{h^{n}}}$ separately.% {\color{blue}Need to go through and change all gradients to primes, since my current argument really does use one dimension ... JLM: Should be close to done.}
\begin{lemma}[construction of competitor]  \ \label{maingridestimate}
Let $h^{n+1}$ denote the minimizer of $\Phi$. Then, there exists $h_\delta \in BV(\TT)$ so that
\begin{enumerate} 
\item $ \|     h_\delta - h^{n}  \|_{\dot{H}^1} \leq \frac{2 \sqrt{2 \pi}}{\delta} \|\varphi \|_{\infty} \|   h^{n} \|_{TV}$  \label{distanceinitialization}%(analogous to second part of \cite[Lemma B.1]{jacobs2019solving});
\item $\| h_\delta - h^{n} \|_{H^{-1}_{h^{n}}}^2 - \| h^{n+1} - h^{n} \|_{H^{-1}_{h^n}}^2 \leq 16 \pi \delta M_1(\varphi) \|   h^n\|_{TV}^2     \left\|1/M(h^n) \right\|_{1}$;   \label{energydiff}%(analogous to first part of \cite[Lemma B.1]{jacobs2019solving});
\item $\|  h_\delta\|_{TV} - \| h^{n+1}\|_{TV} \leq \|   h^n*\varphi_\delta -   h^n \|_{TV} $.\label{BVenergybound}
\end{enumerate}
\end{lemma}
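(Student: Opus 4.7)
The plan is to take the explicit competitor
\[ h_\delta := h^{n+1} * \varphi_\delta + h^n - h^n * \varphi_\delta = h^n + (h^{n+1} - h^n) * \varphi_\delta, \]
and verify the three estimates in turn; the two representations above are tailored to item (\ref{BVenergybound}) and items (\ref{distanceinitialization})--(\ref{energydiff}), respectively.

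For item (\ref{distanceinitialization}), I would use that $\grad(h_\delta - h^n) = \mu * \varphi_\delta$, where $\mu := \grad(h^{n+1} - h^n)$ is a signed measure on $\TT$ with total variation $|\mu|(\TT) \leq \|h^{n+1}\|_{TV} + \|h^n\|_{TV} \leq 2\|h^n\|_{TV}$ by Lemma \ref{basicminmov}(\ref{energydec}). The pointwise bound $\|\mu * \varphi_\delta\|_\infty \leq \|\varphi_\delta\|_\infty |\mu|(\TT) = (\|\varphi\|_\infty/\delta)|\mu|(\TT)$ combined with $\|\cdot\|_{L^2(\TT)} \leq \sqrt{2\pi}\|\cdot\|_{L^\infty(\TT)}$ on the torus of length $2\pi$ delivers the claimed bound. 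For item (\ref{BVenergybound}), apply the triangle inequality for $\|\cdot\|_{TV}$ to the first representation together with the standard contraction $\|f * \varphi_\delta\|_{TV} \leq \|f\|_{TV}$.

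The substantive step is item (\ref{energydiff}). Setting $\psi := h^{n+1} - h^n$ and $\psi_\delta := \psi * \varphi_\delta$, Lemma \ref{weightednormchar} produces $\eta_\psi \in L^1(\TT)$ with $\la \psi, f \ra = \int \eta_\psi f'$ for every $f \in C^\infty(\TT)$ and $\|\psi\|_{H^{-1}_{h^n}}^2 = \int \eta_\psi^2/M(h^n)$. Because convolution commutes with differentiation, $\eta_\psi * \varphi_\delta$ is likewise admissible as a dual field for $\psi_\delta$; the Cauchy--Schwarz step inside the proof of Lemma \ref{weightednormchar} then yields the variational upper bound
\[ \|\psi_\delta\|_{H^{-1}_{h^n}}^2 \leq \int \frac{(\eta_\psi * \varphi_\delta)^2}{M(h^n)}. \]
Factoring the integrand difference as $a^2 - b^2 = (a-b)(a+b)$ and estimating via $\int |fg|/M(h^n) \leq \|f\|_\infty \|g\|_\infty \|1/M(h^n)\|_1$ reduces item (\ref{energydiff}) to two $L^\infty$ estimates on $\eta_\psi$. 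In one dimension, $\eta_\psi$ is (up to the additive constant fixed by $\int \eta_\psi/M(h^n) = 0$) an antiderivative of $-\psi$, so $\|\eta_\psi\|_\infty$ is controlled by the oscillation of this antiderivative, which together with $\|h^n\|_\infty, \|h^{n+1}\|_\infty \leq \|h^n\|_{TV}$ (Lemma \ref{BVtoBlem} combined with Lemma \ref{basicminmov}(\ref{energydec})) yields $\|\eta_\psi * \varphi_\delta + \eta_\psi\|_\infty = O(\|h^n\|_{TV})$. Since $\eta_\psi$ is Lipschitz with constant $\|\eta_\psi'\|_\infty = \|\psi\|_\infty \leq 2\|h^n\|_{TV}$, the classical first-moment mollification estimate gives $\|\eta_\psi * \varphi_\delta - \eta_\psi\|_\infty \leq \delta M_1(\varphi) \|\eta_\psi'\|_\infty$, and combining the three ingredients produces the $O(\delta)$ gap with the stated dependence on $\|h^n\|_{TV}^2$ and $\|1/M(h^n)\|_1$.

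The main obstacle is the variational inequality just above: the weighted norm $\|\cdot\|_{H^{-1}_{h^n}}$ is defined only implicitly through the weighted elliptic problem $-(M(h^n)\xi')' = \psi$, so it is not obvious a priori that convolution with $\varphi_\delta$ barely increases it. The trick is to pass from the primal variable $\xi$ to the dual field $\eta_\psi$ via Lemma \ref{weightednormchar}, which recasts the norm as a minimum over admissible $\eta$; since convolution commutes with $\grad$, $\eta_\psi * \varphi_\delta$ is admissible for $\psi_\delta$, and routine $L^\infty$ mollification bounds on $\eta_\psi$ (from its explicit 1D antiderivative representation together with the BV-to-$L^\infty$ embedding) close the argument.
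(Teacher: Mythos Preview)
Your proposal is correct and follows essentially the same route as the paper: the same competitor $h_\delta = h^n + (h^{n+1}-h^n)*\varphi_\delta$, the same dual-field representation $\eta_\psi$ from Lemma~\ref{weightednormchar} together with the variational upper bound $\|\psi_\delta\|_{H^{-1}_{h^n}}^2 \leq \int (\eta_\psi*\varphi_\delta)^2/M(h^n)$, and the same $a^2-b^2$ factorization closed by $L^\infty$ bounds on $\eta_\psi$ and on $\eta_\psi*\varphi_\delta-\eta_\psi$. The only cosmetic differences are that for item~(\ref{distanceinitialization}) you bound $\|\mu*\varphi_\delta\|_{L^2}$ directly via $\|\mu*\varphi_\delta\|_\infty\leq\|\varphi_\delta\|_\infty|\mu|(\TT)$ whereas the paper reaches the same constant by duality against test functions, and for the $L^\infty$ bound on $\eta_\psi$ you invoke the antiderivative/oscillation picture while the paper phrases it as the Poincar\'e inequality $\|\eta\|_\infty\leq 2\pi\|\eta'\|_\infty$---the same estimate in different clothing.
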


\begin{proof}
In order to construct our approximating sequence $h_\delta$, we first prove some basic properties of  $h^{n+1}-h^n$. By Lemma \ref{basicminmov} (\ref{energydec}), we have  $\|   h^{n+1} \|_{TV} \leq \|  h^n \|_{TV}  <+\infty$, so $\|   h^{n+1}-h^n \|_{TV} \leq 2 \|  h^n \|_{TV}$. Furthermore, since $h^{n+1}$ and $h^n$ have mean zero, so does $h^{n+1}-h^n$. Thus, by Lemma \ref{BVtoBlem}, we conclude
\begin{align} \label{philinfty}
\|h^{n+1}-h^n\|_{\infty} \leq \|h^{n+1}-h^n\|_{TV} \leq 2 \|   h^n \|_{TV}.
\end{align}

By Lemma \ref{basicminmov} (\ref{stepsizebound}),  we also have
\begin{align}
 \|h^{n+1}-h^n \|_{H^{-1}_{h^n}} \leq 2 \tau \|   h^n\|_{TV} . \label{phidualbound}
\end{align}
 Therefore, by Lemma \ref{weightednormchar}, for $\psi = h^{n+1}-h^n$, there exists $\eta \in L^1(\TT)$ so that  
 \begin{align} \label{etal1}
 \| \eta\|_1 &\leq \| h^{n+1}-h^n\|_{H^{-1}_{h^n}} \|M(h^n)\|_1 , \\
 \label{etaweakder}
 \la h^{n+1}-h^n, f \ra &= \int \eta  f' \text{ for all } f \in C^\infty(\TT)  , \\
  \| h^{n+1}-h^n \|_{H^{-1}_{h^n}(\TT)}^2 &=   \int  \frac{|\eta|^2}{M(h^n)} .  \label{Shnp1}
 \end{align}
Since  $h^{n+1}-h^n \in L^\infty(\TT)$, equation (\ref{etaweakder}) implies that the distributional gradient $ \eta' \in L^\infty(\TT)$, so by Poincar\'e's inequality, $\eta \in W^{1,\infty}(\TT)$ with  
\begin{align} \label{uniformbdeta} 
\|\eta\|_{\infty} \leq 2 \pi \| \eta' \|_{\infty} = 2 \pi \|h^{n+1} - h^n\|_{\infty} \leq 4 \pi \|h^n\|_{TV}
\end{align}
%|\eta(x)| = \left| \int_{x_0}^x \eta'(y) dy + \eta(x_0) \right| \leq 2 \pi \|g\|_\infty + \frac{\tau}{\pi}  \|   h^n \|_{TV} \|(h^n)\|_1 \leq  \tau \|   h^n \|_{TV} \left( 4 \pi  + \frac{1}{\pi}   \|M(h^n)\|_1 \right). \end{align}

We now use $\eta$ to construct our approximation $h_\delta$.
Fix  a compactly supported mollifier $\varphi: \R \to [0,+\infty)$, $\supp \varphi \subseteq B_{2 \pi}(0)$, and let $\varphi_\delta(x) = \varphi(x/\delta)/\delta$. (This mollifier does not need to coincide with that used to regularize the mobility.) Define 
\begin{align}\label{etadeltadef}
\eta_\delta := \eta *\varphi_\delta   .
\end{align}
so $(h^{n+1}-h^{n})*\varphi_\delta =  \eta_\delta'$.
We then choose our approximation $h_\delta$ to be
\begin{align} \label{deltarcand}
h_\delta = h^n + (h^{n+1}-h^{n})*\varphi_\delta
\end{align}

With this definition of $h_\delta$ in hand, we turn to the proof of item (\ref{distanceinitialization}) above.  By inequality (\ref{philinfty}), we have for all $f \in C^\infty(\TT)$,
\begin{align*} 
\left| \int  f'  (h^n  - h_\delta )   \right| &=  \left| \int  f'  (h^{n+1}-h^{n})*\varphi_\delta  \right| =  \left| \int  (\varphi_\delta*f)'  (h^{n+1}-h^{n})    \right| \\
& \leq \|\varphi_\delta *f\|_{\infty}  \|h^{n+1}-h^{n}\|_{TV}   
 \leq \frac{2 \sqrt{2 \pi}}{\delta} \|\varphi\|_{\infty}   \|h^{n}\|_{TV} \|f\|_{2} .
\end{align*}
This ensures $h^n - h_\delta \in H^1(\TT)$ and implies the bound in item (\ref{distanceinitialization}).

Now, we turn to item   (\ref{energydiff}). First, we estimate the rate at which $\eta_\delta$ converges to $\eta$.
By definition of $\eta_\delta$, the fact   $\|  \eta' \|_\infty = \| h^{n+1}-h^n \|_\infty \leq 2\| h^n\|_{TV}$, and inequality (\ref{philinfty}),
\begin{align} \label{etadelconv}
\left| \eta_\delta(x) - \eta(x) \right| &=  \left| \int_\TT \varphi_\delta(x-y) (\eta(y) - \eta(x)) dy \right|  \leq  \| \eta' \|_\infty \int_\TT \varphi_\delta(x-y) |x-y| dy  \nonumber \\
 &\leq 2 \delta M_1(\varphi) \|   h^n\|_{TV} 
\end{align}
where $M_1(\varphi)$ is the first moment of $\varphi$.

Next, we estimate $\|h_\delta-h^{n}\|_{H^{-1}_{h^n}}$ in term of $\eta_\delta$. By definition,  
\begin{align*}
\|h_\delta-h^{n}\|_{H^{-1}_{h^n}} &=   \|(h^{n+1}-h^n)*\varphi_\delta \|_{H^{-1}_{h^{n}}} = \sup_{f \in C^\infty(\TT) \text{ s.t.}  \int f =0 } \frac{ \int (h^{n+1}-h^n)*\varphi_\delta f}{\|f\|_{H^1_{h^n}}}\\
&= \sup_{f \in C^\infty(\TT)  } \frac{ \int (h^{n+1}-h^n)*\varphi_\delta f}{\|f\|_{H^1_{h^n}}} ,
\end{align*}
where in the last equality, we use that $\int h^{n+1} - h^n = \int (h^{n+1}-h^n)*\varphi_\delta = 0$.  Using that $(h^{n+1}-h^n)*\varphi_\delta = (\eta_\delta)'$,  integrating by parts, and applying H\"older's inequality, we obtain that, for any $f \in C^\infty(\TT)$,
\begin{align*}
 \frac{ \int (h^{n+1}-h^n)*\varphi_\delta f}{\|f\|_{H^1_{h^n}}} & =  - \frac{  \int \eta_\delta   f'}{\|f\|_{H^1_{h^n}}} =  - \frac{  \int \eta_\delta M(h^n)^{-1/2} M(h^n)^{1/2}   f'  }{ \left( \int M(h^n) |  f'|^2 \right)^{1/2}}  \\
 & \leq \left( \int \frac{|\eta_\delta|^2}{M(h^n)} \right)^{1/2},
\end{align*}
Thus, $\|h_\delta-h^{n}\|_{H^{-1}_{h^n}} \leq \left( \int |\eta_\delta|^2/M(h) \right)^{1/2}$.

We    apply this     to prove item  (\ref{energydiff}). By equation (\ref{Shnp1}),
\begin{align}
\|h_\delta-h^{n}\|_{H^{-1}_{h^n}}^2 - \|h^{n+1}-h^{n}\|_{H^{-1}_{h^n}}^2 &=  \|(h^{n+1}-h^n)*\varphi_\delta \|^2_{H^{-1}_{h^{n}}} -   \| h^{n+1}-h^{n} \|^2_{H^{-1}_{h^{n}}} \nonumber \\
&\leq  \int  \frac{|\eta_\delta|^2}{M(h^n)} -   \int  \frac{|\eta|^2}{M(h^n)} \nonumber \\
&=   \int  \frac{1}{M(h^n)}  \left( \eta_\delta  -\eta  \right) \left( \eta_\delta  + \eta  \right)    \nonumber  \\
&\leq  \|\eta_\delta - \eta\|_{\infty} \left( \| \eta_\delta\|_\infty + \|\eta\|_\infty \right)   \|1/M(h^n) \|_{1}  \nonumber  \\
&\leq 16 \pi \delta M_1(\varphi) \|   h^n\|_{TV} ^2     \left\|1/M(h^n) \right\|_{1},  \nonumber 
\end{align}
where, in the last inequality, we apply our uniform bound on $\eta$, inequality (\ref{uniformbdeta}), and our uniform estimate on the convergence of $\eta_\delta$ to $\eta$, inequality (\ref{etadelconv}).   This completes the proof of (\ref{energydiff}).

We conclude by  showing item (\ref{BVenergybound}). By the triangle inequality,
\begin{align} \label{lastBVenergybound}
\|  h_\delta\|_{TV} - \|  h^{n+1}\|_{TV} &= \|  h^n + (h^{n+1}-h^n) * \varphi_\delta\|_{TV} - \|  h^{n+1}\|_{TV} \\
&\leq \|   h^n*\varphi_\delta -   h^n \|_{TV} + \|  h^{n+1} *\varphi_\delta \|_{TV} - \|  h^{n+1}\|_{TV}  . \nonumber
\end{align}
Furthermore, for any $f \in C^\infty(\TT)$,
\begin{align*}
-\int  f' (h^{n+1}*\varphi_\delta) = -\int  (\varphi_\delta *f)' h^{n+1} \leq \|\varphi_\delta*f \|_\infty \|h^{n+1}\|_{TV} \leq \|f\|_{\infty} \|h^{n+1}\|_{TV} .
\end{align*}
Therefore $\|  h^{n+1} *\varphi_\delta \|_{TV} \leq \|  h^{n+1}\|_{TV}$, which combined with  (\ref{lastBVenergybound}) gives item (\ref{BVenergybound}).
\end{proof}

We now apply this lemma to prove our key estimate, quantifying the rate of convergence of functions $h$ that are a finite distance from the initialization $h^n$ in the $\dot{H}^1$ norm to the optimizer of $\Phi$.
\begin{proposition} \label{objR} For any  compactly supported mollifier $\varphi_\delta$, there exists an explicit constant $C>0$ depending on  $\|  h^n\|_{TV}$, $\|M(h^n)\|_1$, and $\|1/M(h^n)\|_1$, so that 
\begin{align} \label{deltarmin2}
\min_{\|   h^n -   h \|_{\dot{H}^1} \leq R} \Phi(h) - \Phi(h^{n+1}) \leq \frac{C}{R}+  \|   h^n*\varphi_\delta -   h^n \|_{TV} 
\end{align}
where $\delta = \frac{2 \sqrt{2\pi}}{R} \|\varphi \|_{\infty} \|   h^n \|_{TV}^2$.
\end{proposition}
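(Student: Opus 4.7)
The strategy is to instantiate the competitor $h_\delta$ built in Lemma \ref{maingridestimate} as a feasible point for the constrained minimization on the left-hand side of \eqref{deltarmin2}, and then add up the three estimates provided by that lemma.

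First, I would fix $\delta>0$ so that item (\ref{distanceinitialization}) of Lemma \ref{maingridestimate} guarantees $\|h_\delta - h^n\|_{\dot H^1}\le R$; this forces exactly the choice of $\delta$ appearing in the statement (up to the explicit constants $\sqrt{2\pi}\|\varphi\|_\infty\|h^n\|_{TV}$). With this choice, $h_\delta$ is admissible, so
\begin{equation*}
\min_{\|h^n-h\|_{\dot H^1}\le R}\Phi(h) - \Phi(h^{n+1})\;\le\;\Phi(h_\delta)-\Phi(h^{n+1}).
\end{equation*}

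Next, I would split the right-hand side according to the definition \eqref{hndefphi} of $\Phi$ into a total variation part and a weighted $H^{-1}_{h^n}$ part:
\begin{equation*}
\Phi(h_\delta)-\Phi(h^{n+1}) = \bigl(\|h_\delta\|_{TV}-\|h^{n+1}\|_{TV}\bigr) + \frac{1}{2\tau}\bigl(\|h_\delta-h^n\|^2_{H^{-1}_{h^n}} - \|h^{n+1}-h^n\|^2_{H^{-1}_{h^n}}\bigr).
\end{equation*}
Item (\ref{BVenergybound}) of Lemma \ref{maingridestimate} bounds the first parenthesis directly by $\|h^n\ast\varphi_\delta-h^n\|_{TV}$, which is the second term on the right-hand side of \eqref{deltarmin2}. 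Item (\ref{energydiff}) bounds the second parenthesis by $16\pi\delta\,M_1(\varphi)\|h^n\|_{TV}^2\|1/M(h^n)\|_1$. Substituting the chosen value of $\delta$, the $1/R$ factor emerges and the remaining constants combine into a single $C$ depending only on $\|h^n\|_{TV}$, $\|M(h^n)\|_1$, $\|1/M(h^n)\|_1$, the mollifier $\varphi$, and $\tau$.

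There is no real obstacle: the proposition is essentially the bookkeeping step that packages Lemma \ref{maingridestimate} into the form needed to drive the PDHG convergence argument. The only mild subtlety is making sure the $\delta$ chosen to enforce the $\dot H^1$ constraint is consistent with the expression appearing in the statement, and keeping track of which constants absorb into $C$ versus which one (the modulus of continuity $\|h^n\ast\varphi_\delta-h^n\|_{TV}$) must be left explicit, because only the latter vanishes as $\delta\to 0$ under the bare assumption $h^n\in BV(\TT)$, while the quantitative rate $\delta\|\grad h^n\|_{TV}M_1(\varphi)$ used later in Theorem \ref{mainconvresult} requires the stronger hypothesis $\grad h^n\in BV(\TT)$.
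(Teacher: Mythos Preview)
Your proposal is correct and follows essentially the same route as the paper: choose $\delta$ so that Lemma~\ref{maingridestimate}(\ref{distanceinitialization}) makes $h_\delta$ feasible for the $\dot H^1$-ball constraint, then bound $\Phi(h_\delta)-\Phi(h^{n+1})$ by summing items~(\ref{energydiff}) and~(\ref{BVenergybound}). Your remark that $C$ also picks up a dependence on $\tau$ and on the mollifier is accurate (the paper suppresses these as fixed parameters), and your closing comment about why the $\|h^n*\varphi_\delta-h^n\|_{TV}$ term must be left explicit is exactly the right perspective for how this proposition feeds into Theorem~\ref{mainconvresult}.
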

 
\begin{proof}
Let $h_\delta$ be as in Lemma \ref{maingridestimate} and choose $\delta>0$  so that 
\begin{align} \label{deltaR}
\frac{2 \sqrt{2 \pi}}{\delta} \|\varphi \|_{\infty} \|   h^n \|_{TV} = R.
\end{align} Then Lemma \ref{maingridestimate} (\ref{distanceinitialization}) guarantees that $\|      h_\delta - h^n \|_{\dot{H}^1} \leq R$, so $h_\delta$ is a candidate for the minimization problem (\ref{deltarmin2}). Therefore, it suffices to bound the objective functional when $h = h_\delta$.
By Lemma \ref{maingridestimate} (\ref{energydiff}) and (\ref{BVenergybound}), we have
\begin{multline*}
\left( \frac{1}{2 \tau} \| h_\delta - h^{n} \|_{H^{-1}_{h^n}}^2 + \|  h_\delta \|_{TV} \right) - \left( \frac{1}{2 \tau} \| h^{n+1} - h^{n} \|_{H^{-1}_{h^n}}^2 + \|  h^{n+1} \|_{TV} \right) \\
 \leq 16 \pi \delta M_1(\varphi) \|   h^n\|_{TV} ^2 \|1/M(h^n)\|_{1} + \|   h^n*\varphi_\delta -   h^n \|_{TV}
\end{multline*}
which, combined with equation (\ref{deltaR}), gives the result.
\end{proof}
 
We now turn to the proof of our main result, Theorem \ref{mainconvresult}, which shows that  the PDGH algorithm converges to the optimizer in the ergodic sense: that is, if  $h^{(M)}$ is the ergodic sequence (\ref{ergodicseqdef}), then $\lim_{M \to +\infty} \Phi(h^{(M)}) = \Phi(h^{n+1})$.

  \begin{proof}[Proof of Theorem~\ref{mainconvresult}] %\jl{need to make the superscript notation consistent in the proof}
Following Chambolle and Pock \cite[equation 17]{chambolle2016ergodic} and Jacobs, L\'eger, Li, and Osher \cite{jacobs2019solving}, we consider the  partial primal-dual gap  
\begin{align} \label{ppdgdef}
\mathcal{G}_{R_1,R_2}(h,\phi) &:= \sup_{\hat{\phi} : \left\|   \hat{\phi} -   \phi^{(0)} \right\|_{2} \leq R_1} \mathcal{L}(h,\hat{\phi}) - \inf_{\hat{h} : \left\|   \hat{h} -   h^0 \right\|_{\dot{H}^1} \leq R_2} \mathcal{L}(\hat{h}, \phi) 
\end{align}
where $\mathcal{L}(h, \phi)$ is the Lagrangian defined in equation
(\ref{lagrangiandef}). Since the gradient operator
$\partial_x : \dot{H}^1(\TT) \to L^2(\TT)$ satisfies
\begin{align*}
\| h' \|_{2} = \| h \|_{\dot{H}^1} , \quad \forall h \in \dot{H}^1,
\end{align*}
the operator norm of the gradient is one. Consequently, by
Chambolle and Pock \cite[Theorem 1]{chambolle2016ergodic}, if $\lambda \sigma \leq 1$, then along the ergodic sequences (\ref{ergodicseqdef}),  
\begin{align} \label{CPineq}
\mathcal{G}_{R_1,R_2}(h^{(M)}, \phi^{(M)}) \leq \frac{1}{M} \left(\frac{ R_1^2}{\sigma} + \frac{R_2^2}{\lambda} \right) .
\end{align}

We seek to bound each term in the partial primal-dual gap separately. Since $\| \phi^{(0)}\|_{\infty} \leq 1$ (in fact, in practice we take $\phi^{(0)} = 0$)  we have 
\begin{align} \label{linftytol2} \{ \hat{\phi} : \| \hat{ \phi}\|_{\infty} \leq 1 \} \subseteq \{ \hat{\phi} : \| \hat{\phi} - \phi^{(0)} \|_{2} \leq 2 \sqrt{ 2 \pi} \}. 
\end{align}
 Since $\hat{\phi} \mapsto \mathcal{L}(h^{(M)}, \hat{\phi}) \neq -\infty$ only if $\| \hat{\phi} \|_{\infty} \leq 1$, this implies
\begin{align} \label{supLbound}
  \sup_{\hat{\phi} : \left\|   \hat{\phi} -   \phi^{(0)} \right\|_{2} \leq 2 \sqrt{2 \pi}} \mathcal{L}(h^{(M)},\hat{\phi}) =  \sup_{\hat{\phi} : \| \hat{\phi} \|_{\infty} \leq 1} \mathcal{L}(h^{(M)},\hat{\phi})   =   \Phi(h^{(M)}).
\end{align}

By definition of $\phi^{(m+1)}$ in  equation (\ref{cp4p}), $F^*(\phi^{(m+1)}) < +\infty$, so $\|\phi^{(m+1)}\|_{\infty} \leq 1$  for all $m \in \mathbb{N}$ and the   ergodic sequence also satisfies $\| \phi^{(M)} \|_\infty \leq 1$ for all $M \in \mathbb{N}$. Thus, for any $R>0$,
\begin{align} \label{infLbound}
 \inf_{\hat{h} : \left\|  \hat{h} -  h^n \right\|_{\dot{H}^1} \leq R} \mathcal{L}(\hat{h}, \phi^{(M)})  \leq  \inf_{\hat{h} : \left\|   \hat{h} -   h^n \right\|_{\dot{H}^1} \leq R} \sup_{\phi : \|\phi\|_{\infty} \leq 1} \mathcal{L}(\hat{h}, \phi )  =  \inf_{\hat{h} : \left\|   \hat{h} -   h^n \right\|_{\dot{H}^1} \leq R} \Phi(\hat h) . \end{align}

 Combining these estimates, we conclude that for any $R>0$,
\begin{align*}
\Phi (h^{(M)})  - \Phi(h^{n+1}) & \stackrel{\eqref{supLbound}}{=} \sup_{\hat{\phi} : \left\|   \hat{\phi} -   \phi^n \right\|_{2} \leq 2 \sqrt{2 \pi}} \mathcal{L}(h^{(M)},\hat{\phi})  - \Phi(h^{n+1}) \\ %&\text{ by equation (\ref{supLbound})} \\
&\stackrel{\eqref{ppdgdef}}{=} \mathcal{G}_{2 \sqrt{2 \pi}, R}(h^{(M)}, \phi^{(M)})  + \inf_{\hat{h} : \left\|   \hat{h} -   h^n \right\|_{\dot{H}^1} \leq R} \mathcal{L}(\hat{h}, \phi^{(M)}) - \Phi(h^{n+1}) \\ %&\text{ by equation (\ref{ppdgdef})} \\
& \stackrel{\eqref{infLbound}}{\leq} \mathcal{G}_{2 \sqrt{2 \pi}, R}(h^{(M)}, \phi^{(M)}) +  \inf_{\hat{h} : \left\|   \hat{h} -   h^n \right\|_{\dot{H}^1} \leq R} \Phi(\hat h)- \Phi(h^{n+1}) \\ %&\text{ by inequality (\ref{infLbound})}\\
 & \stackrel{\eqref{CPineq}}{\leq} \frac{1}{M} \left(\frac{ 8 \pi}{\sigma} + \frac{R^2}{\lambda} \right) +  \inf_{\hat{h} : \left\|   \hat{h} -   h^n \right\|_{\dot{H}^1} \leq R} \Phi(\hat h)- \Phi(h^{n+1}) \\ % &\text{ by inequality (\ref{CPineq})} \\
  & \stackrel{\eqref{deltarmin2}}{\leq} \frac{1}{M} \left(\frac{ 8 \pi}{\sigma} + \frac{R^2}{\lambda} \right) + \frac{C}{R}+  \|   h^n*\varphi_\delta -   h^n \|_{TV}\,,%  &\text{ by Proposition \ref{objR}},
\end{align*}
where $\delta = \frac{2 \sqrt{2\pi}}{R} \|\varphi \|_{\infty} \|   h^n \|_{TV}$ and $C>0$ depends on $\|  h^n\|_{TV} $, $\|M(h^n)\|_{1}$, and $\|1/M(h^n)\|_{1}$. We may optimize the first term on the right hand side by choosing %\jl{the choice of parameters are corrected; please check}
\[ \sigma = 2 \sqrt{2 \pi} / R, \quad  \lambda = R/2 \sqrt{2\pi}, \]
in which case we obtain
\begin{align} \label{epsacc1}
\Phi(h^{(M)}) -\Phi(h^{n+1})   \leq \frac{4 \sqrt{2 \pi} R}{M}  + \frac{C}{R}+  \|   h^n*\varphi_\delta -   h^n \|_{TV} .
\end{align}
We claim that, since $\| h^n \|_{TV}< +\infty$, 
\begin{align} \label{claimmoll}
\lim_{R \to +\infty} \|   h^n*\varphi_\delta -   h^n \|_{TV}  = 0.
\end{align}
Thus, we conclude the existence of $M_*, \lambda_*, \sigma_*$ such that for all $M \geq M_*$, we have an $\epsilon$-approximate solution.

It remains to prove the claim (\ref{claimmoll}). Note that if $\phi \in C^\infty$ satisfies $\|\phi \|_\infty \leq 1$, then $\|\varphi_\delta*\phi\|_\infty \leq 1 $  for all $\delta >0$. Hence,
\begin{align*}
 \| h^n*\varphi_\delta \|_{TV} &= \sup_{  \| \phi \|_\infty \leq 1} \int -(h^n*\varphi_\delta) \phi' =   \sup_{  \| \phi \|_\infty \leq 1} \int -h^n  (\phi* \varphi_\delta)' \\
 & \leq    \sup_{ \| \phi \|_\infty \leq 1} \int -h^n  \phi' = \| h^n \|_{TV} < +\infty 
 \end{align*}
This shows $\|h^n*\varphi_\delta - h^n\|_{TV} < +\infty$. Hence, for all $\epsilon >0$, there exists $\phi \in C^\infty$ so  
\begin{align} \label{firsthdeltaconv}
\|h^n*\varphi_\delta - h^n\|_{TV} \leq - \int  (h^n*\varphi_\delta - h^n) \phi' + \epsilon = - \int h^n (  \phi * \varphi_\delta - \phi)' +\epsilon .
\end{align}
Since $\phi$ is a smooth function on a compact set, sending $\delta \to 0$, we conclude that $\limsup_{\delta \to 0} \|h^n*\varphi_\delta - h^n\|_{TV} \leq \epsilon$. Since  $\epsilon >0$ was arbitrary, this proves our claim, again using equation (\ref{deltaR}), relating $\delta$ and $R$.

Now, suppose the function $h^n$ satisfies a higher regularity assumption: $(h^n)' \in BV(\TT)$. Following the same argument as in equation (\ref{firsthdeltaconv}), we have
\begin{align*}
\|h^n*\varphi_\delta - h^n\|_{TV}    =  \sup_{ \|\phi\|_{\infty} \leq 1}    \int  (h^n)'   \cdot (  \phi * \varphi_\delta - \phi)   .
\end{align*}
Furthermore,
\begin{align*}
 \int (h^n)'   \cdot (  \phi * \varphi_\delta - \phi)   &= \iint (h^n)' (x) \cdot (\phi(x-y) - \phi(x)) \varphi_\delta(y) dy dx \\
 &=  - \delta \iint_{\TT \times \TT} \int_0^1 ( (h^n)' (x))^t D \phi(x-s y) y \varphi (y) ds dy dx \\
&\leq \delta \|  (h^n)' \|_{TV} M_1(\varphi) \|\phi\|_{\infty}
\end{align*}

As a consequence, equation (\ref{epsacc1}) becomes
\begin{align*}
\Phi(h^{(M)})-\Phi(h^{n+1})  \leq \frac{4 \sqrt{2 \pi} R}{M}  + \frac{C}{R}+ \delta \|  (h^n)' \|_{TV} M_1(\varphi),
\end{align*}
where $\delta = C'/R$, for $C' =  2 \sqrt{2 \pi} \|\varphi \|_{\infty} \|   h^n \|_{TV} $.
Thus, to obtain an $\epsilon >0$ accurate solution, we require
\begin{align*}
M \geq 4 \sqrt{2 \pi } R  \left( \epsilon -\frac{C''}{R} \right)^{-1}  , \quad \text{ for } C'' =C+C' \| ( h^n)'\|_{TV} M_1(\varphi).
\end{align*}

Optimizing in $R \geq 0$, we obtain that for $R= 2C''/\epsilon$, the choices
\begin{align*}
  M_*:= \frac{16 \sqrt{2 \pi}   C''}{\epsilon^2}, \quad \lambda_* =  \frac{C''}{\sqrt{2 \pi}\epsilon}, \quad \sigma_* = \frac{\sqrt{2 \pi}\epsilon}{C''}  ,
\end{align*}
ensure  that $(h^{(M)}, \phi^{(M)})$ is an $\epsilon$-approximate solution for all $M \geq M_*$.
  \end{proof}

  \section{Fully discrete numerical method} \label{numericalmethodsec}
In this section, we describe how the discrete time, spatially continuous PDHG
  algorithm introduced in section \ref{PDHGdefsec} can be implemented
  as a fully discrete numerical method for simulating crystal surface
  evolution.  In one spatial dimension, let $[0,2\pi]$ be the
  computational domain with periodic boundary conditions and $\Delta x$
  and $\tau$ be the spatial grid spacing and outer time step,
  respectively.  Choose $0=x_1< \cdots < x_{N_x} = 2\pi - \Delta x$,
  where $x_j = (j-1) \Delta x$, $\Delta x = \frac{2\pi}{N_x}$.  For
  notational simplicity, let $h$ and $\phi$ denote the
  discrete vector approximations in $\R^{N_x}$ of the corresponding
  functions, 
\[
h = (h_1, \cdots, h_{N_x})^t, \quad \phi= (\phi_1, \cdots \phi_{N_x})^t .
\]
Let $\Dmat$ and $\Amat$ be the matrix approximations of the operators $\nabla $ and $-\Delta_h$, where $\Dmat$ is given by a centered difference method and $\Amat := \Dmat^t \diag(M(h)_1, \dots, M(h)_{N_x}) \Dmat$.

 \begin{algorithm}[h]
   \caption{PDHG for crystal surface evolution \label{alg:Hdotpen}}
\SetAlgoLined
%$n=0$, $h^{n} = h(0)$ \\
\KwIn{${h}^{0}$, $T$, $\tau$, % $\phi^{n}$, %$\text{Iter}_{max}$,
  $\lambda, \sigma$}
%\KwOut{${h}^{n+1}$}
$n = 0$ \\
\While{ $n \tau \leq T$}{
Let $h^{(0)} = h^n$, $\phi^{(0)} = 0$, and $m=0$; \\
\Repeat{\underline{stopping criteria are achieved}}{ 
\  $h^{(m+1)} = (\Dmat^t \Dmat+\frac{\lambda}{\tau} \Amat^{-1}(\cdot -h^n))^{-1} \left( \Dmat^t \Dmat h^{(m)}- \lambda \Dmat^t \phi^{(m)} \right)$, \\
  $\bar{h}^{(m+1)} = 2h^{(m+1)}-h^{(m)} $, \\
  $\phi^{(m+1)} = (\Imat + \sigma \partial F^*)^{-1}(\phi^{(m)} + \sigma D \bar{h}^{(m+1)})$\,, \\
  $m = m+1$,
}
\ ${h}^{n+1}=\bar{h}^{(m+1)}$ and $n = n+1$, 
}
\end{algorithm}

We discretize our PDHG method \eqref{cp2pp}-\eqref{cp4pp} via a  finite difference scheme, replacing the spatially continuous operators with their discrete counterparts. This leads to Algorithm~\ref{alg:Hdotpen}.   
Finally, we construct our numerical solution $h(x,t)$ for the crystal
surface evolution equation by linearly interpolating between the
spatial gridpoints and taking a piecewise constant interpolation
between the outer discrete time sequence $h^n$.

\begin{remark}[Convergence of fully discrete algorithm] \label{discreteconvergencermk}
Using standard estimates relating finite difference operators to their continuum counterparts, one could adapt our main convergence result, Theorem \ref{mainconvresult}, to be a convergence result for the fully discrete PDHG method, which comprise the inner iterations of  Algorithm \ref{alg:Hdotpen}. See, for example, work by Wang and Lucier \cite{wang2011error}, which considers related estimates for the Rudin-Osher-Fatemi image denoising model. 
%Katy note to self: 
%The adaptation of  Lemma \ref{maingridestimate} (\ref{distanceinitialization})  is not too bad, since this can deteriorate with $\delta$ -- it is okay to use standard Sobolev estimates for quadrature. The adaptation of (\ref{BVenergybound}) follows from Lemma 2.2 in F Error Bounds, for the grid spacing $h \leq h_*$ where $h_* \leq C \delta$. It remains to consider (\ref{energydiff})  }
\end{remark}

In practice, to avoid inverting a near-singular matrix in our computation of $h^{(m+1)}$, we compute the inverse operator in the definition of $h^{(m+1)}$ via
\begin{align} \label{hmp1matrixinverse}
 \left(\Dmat^t \Dmat + \frac{\lambda}{\tau} \Amat^{-1}(\cdot -h^n) \right)^{-1} u 
%&= \left( \Dmat^t \Dmat + \frac{\lambda}{\tau} \Amat^{-1} \right)^{-1} \left(u + \frac{\lambda}{\tau} \Amat^{-1} h^n \right) 
= \left(   \frac{\tau}{\lambda} \Amat \Dmat^t \Dmat + \Imat \right)^{-1} \left(   \frac{\tau}{\lambda} \Amat u  + h^n \right) .
\end{align}
On the other hand, in order to compute $\phi^{(m+1)}$, we use the   explicit formula 
\begin{align*}
(\Imat + \sigma \partial F^*)^{-1}(u) &=   \left[ \min(|u_i|,1) \sgn(u_i) \right] ,
\end{align*}
where $u_i$ denotes the $i$th component of the vector $u$. Note that, while other initializations of the dual variable $\phi^{(0)}$ are possible (for example, initializing $\phi^{(0)}$ to coincide with the last value of $\phi^{(m+1)}$ at the previous outer time step), we observe slightly better performance always initializing $\phi^{(0)} = 0$.

We discretize our regularized mobility as follows: %\jl{changed notation here, realized that $\phi$ is of course used already; and $D$ is used for finite difference to be consistent}
\begin{align}\label{MobCalc}
M(h) :=  e^{- \grad \varphi_\epsilon*\sgn(f)} , \quad f = {\rm minmod} \{ D_+ {h}, D_-  { h} \} ,
\end{align}
where $D_\pm$ denotes the forward/backward finite difference
operators. The minimum modulus limiter of the gradient allows us to
respect shock-like objects in the facet formation; see, e.g.,
\cite{Nessyahu:Tadmor}. Heuristically, this enforces the property of
the original, unregularized mobility (\ref{firstmobility}) that
once a region of the crystal surface becomes flat at a location
$x_0$, i.e. $\frac{d}{dx} h(x_0,t) = 0$, the surface   remains
flat at $x_0$. We compute the convolution in (\ref{MobCalc}) via a
fast Fourier
transform. %{\color{purple} Jeremy, do we always take $\epsilon = 0.01-0.05$? .025-.05 for signum}

\begin{figure}[h]
\hspace{2.8cm} \footnotesize $\sgn(x)$ \hspace{4.5cm} $\tan(10x)$ \hspace{2.2cm}   \\
\includegraphics[height=4.2cm, trim={.2cm .6cm .5cm .5cm},clip]{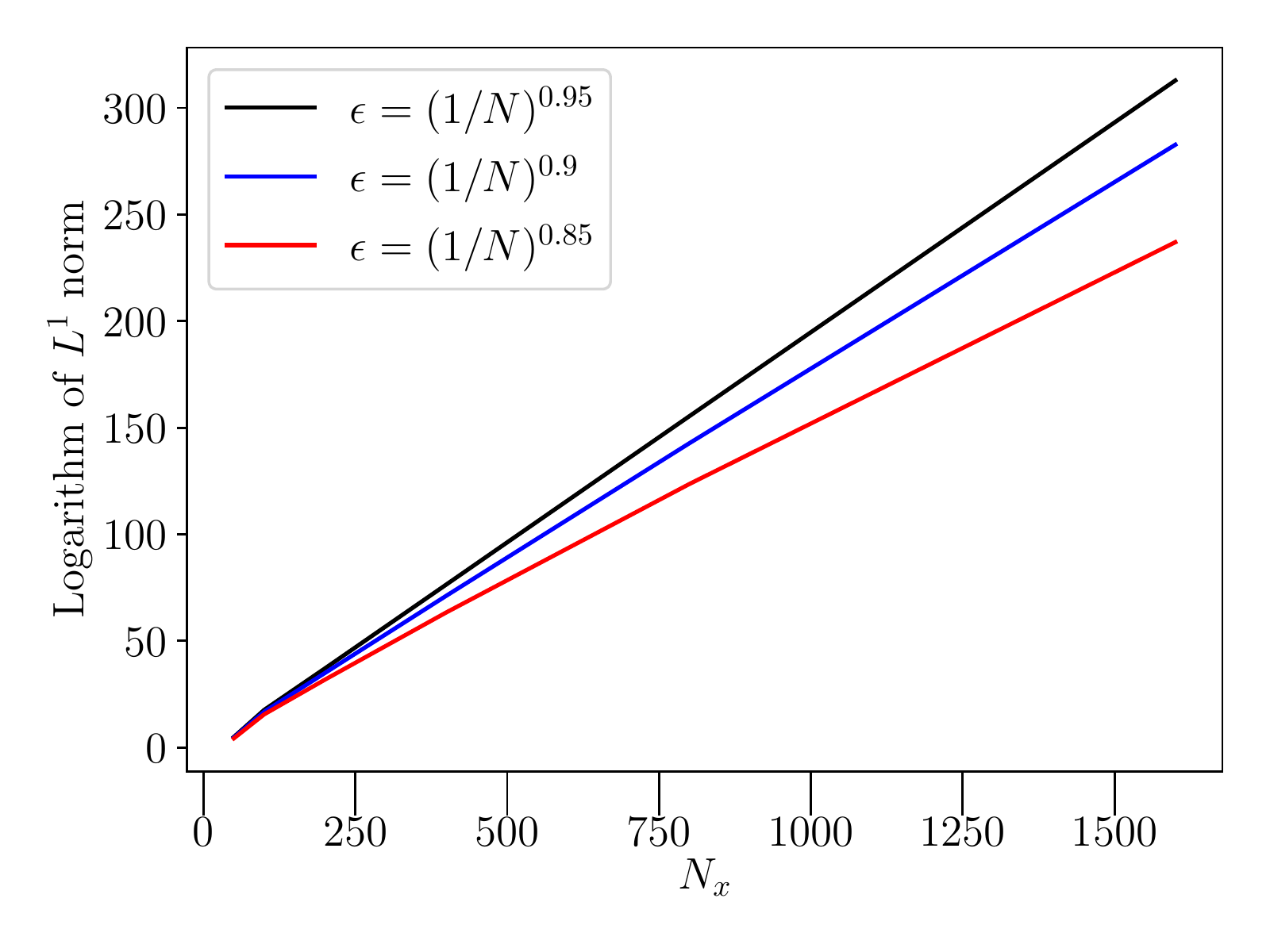} 
\includegraphics[height=4.2cm, trim={.2cm .6cm .5cm .5cm},clip]{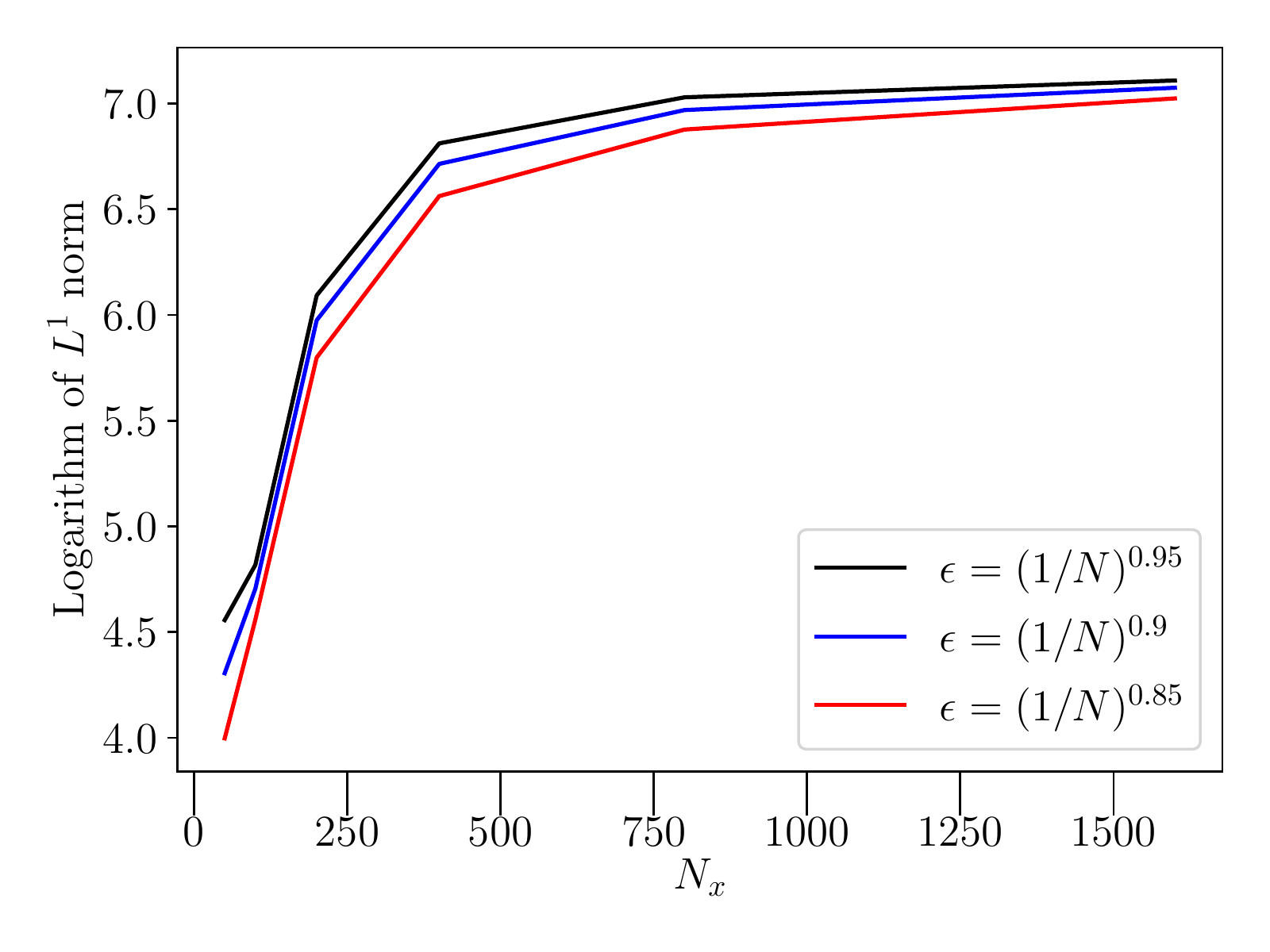}

\caption{Choosing the function $\sgn(x)$ or  $\tanh(x)$ in the   mobility (\ref{MobCalc}) leads to different behavior as $\epsilon \to 0$, $N_x \to +\infty$. Above, we consider the     spatially discrete mobility for height profile $h(x) = \sin(x)$.  Left: For the original mobility, with $\sgn(x)$, even when $\epsilon \to 0$ slowly as $N_x \to +\infty$,  the $L^1$ norm of the mobility diverges. Right: Approximating  with $\tanh(10x)$ allows us to send $\epsilon \to 0$ rapidly as $N_x \to +\infty$, while preserving a uniform bound on the $L^1$ norm of the mobility. }
\label{fig:mob1}
 
\end{figure}

Finally, in our simulations, we  sometimes approximate  $\sgn(x)$ in the definition of the mobility with $\tanh(10 x)$. In order to achieve accurate facet formation, we must strike a balance between choosing the spatial discretization $N_x$ large   and the mobility regularization parameter $\epsilon >0$ small. As illustrated in Figure \ref{fig:mob1}, the original $\sgn(x)$ function is extremely sensitive to small choices of $\epsilon$, which quickly cause the $L^1$ norm of the mobility to become unbounded as $\epsilon \to 0, N_x \to +\infty$, going against the assumption in our convergence result for the PDHG method, Theorem \ref{mainconvresult}, which was proved for fixed $\epsilon >0$. On the other hand, the $\tanh(10 x)$ approximation allows us to refine $\epsilon$ and $N_x$ simultaneously, while keeping the $L^1$ norm of the mobility bounded. A thorough analysis of these  limits is  related to the   question of existence of solutions to the crystal surface evolution equation,    and we leave a detailed study to future work.

\section{Numerical Results} \label{numericssection}
\begin{figure}[h]
\includegraphics[width=4.8cm, trim={.2cm .15cm 1.8cm .8cm},clip]{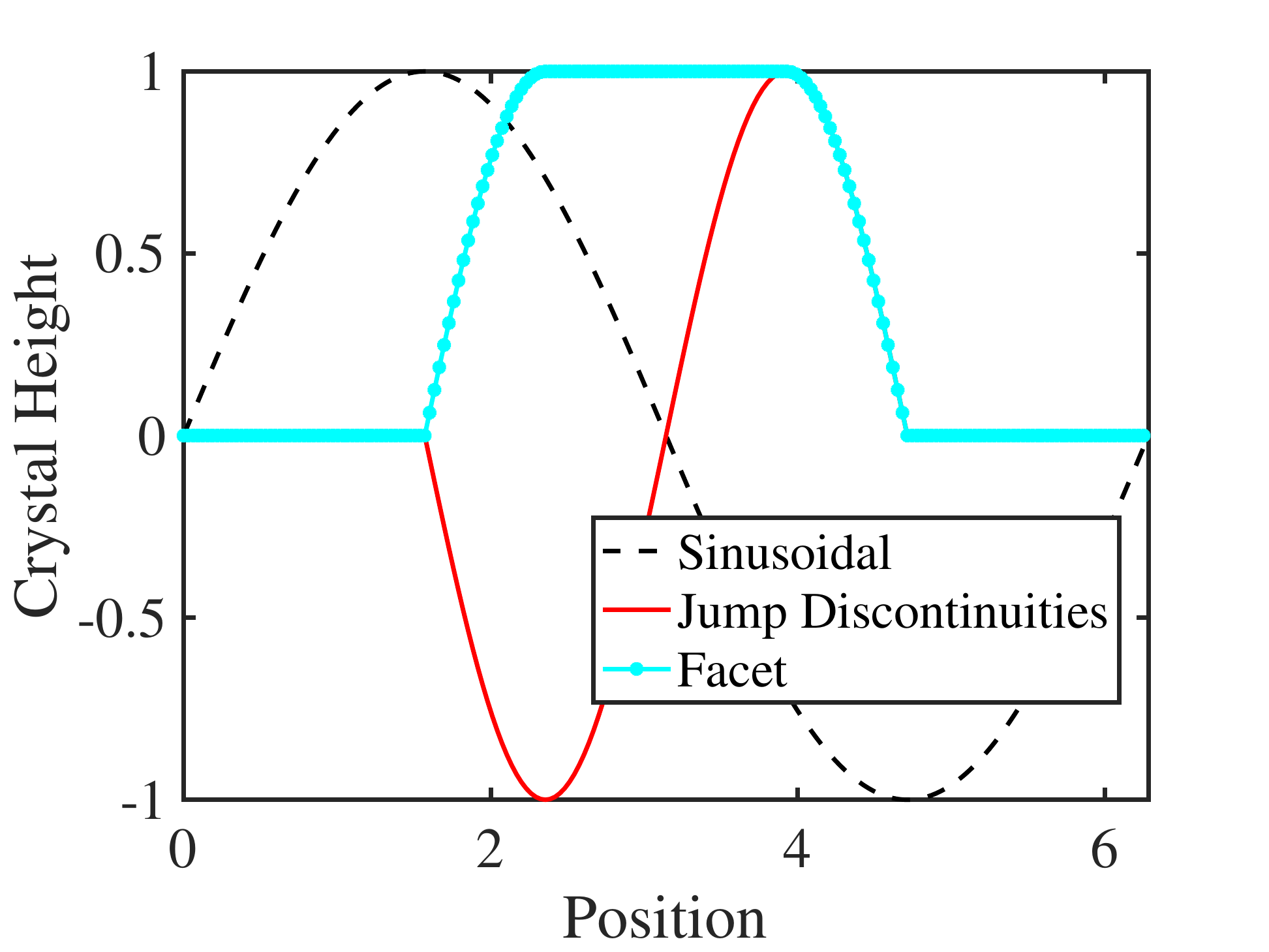}
  \begin{tabular}[b]{l}
 $h_{\text{sine}}(x) \ = \   \ \ \sin (x)$  \\
 $h_{\text{jump}}(x)  = \begin{cases} \sin (2x) & x \in (\pi/2,3 \pi/2) \\
0 & \text{otherwise}\end{cases}$  \\
$h_{\text{facet}}(x) =  \begin{cases}
\sin (2(x-\pi/2)) & x \in \left( \frac{\pi}{2}, \frac{3\pi}{4} \right) \\
1 & x \in (3\pi/4, 5\pi/4) \\
\cos (2(x-5\pi/4)) & x \in (5\pi/4, 3\pi/2) \\
0 & \text{otherwise} 
\end{cases}$ \vspace{.2cm}
  \end{tabular}
  \vspace{-2em}
\caption{Choices of initial data.}
\label{initialdata}
\end{figure}
In this section, we present a range of numerical examples illustrating the performance of the proposed algorithm.  In each test, we consider the stopping criteria  $\|( h^{(m+1)} - h^{(m)} , \phi^{(m+1)} - \phi^{(m)})\| < \delta $, where we   take the threshold $\delta  = 5 \times 10^{-6}$. Unless otherwise specified, the outer time step for the semi-implicit scheme $h^n$ is   chosen to be $\tau = T/10$, where $T$ is the final computational time, so that $N_t = 10$.  In order to ensure that the matrix inverse in the definition of $h^{(m+1)}$, equation (\ref{hmp1matrixinverse}), is well defined, we  choose $\lambda$ sufficiently large so that $\frac{\tau}{\lambda} \| \Amat \Dmat^t \Dmat \| < 1$. In the following examples, we choose $\sigma = 5 \times 10^{-4}, \lambda = 500$ for all $N_x$. We consider three choices of initial data, as shown in Figure \ref{initialdata}.

\begin{figure}[h]
\hspace{1.8cm} \footnotesize Sinuoidal \hspace{2cm} Jump Discontinuities \hspace{2.2cm}  Facet \\
\includegraphics[width=4.38cm, trim={.1cm .15cm 1.8cm .8cm},clip]{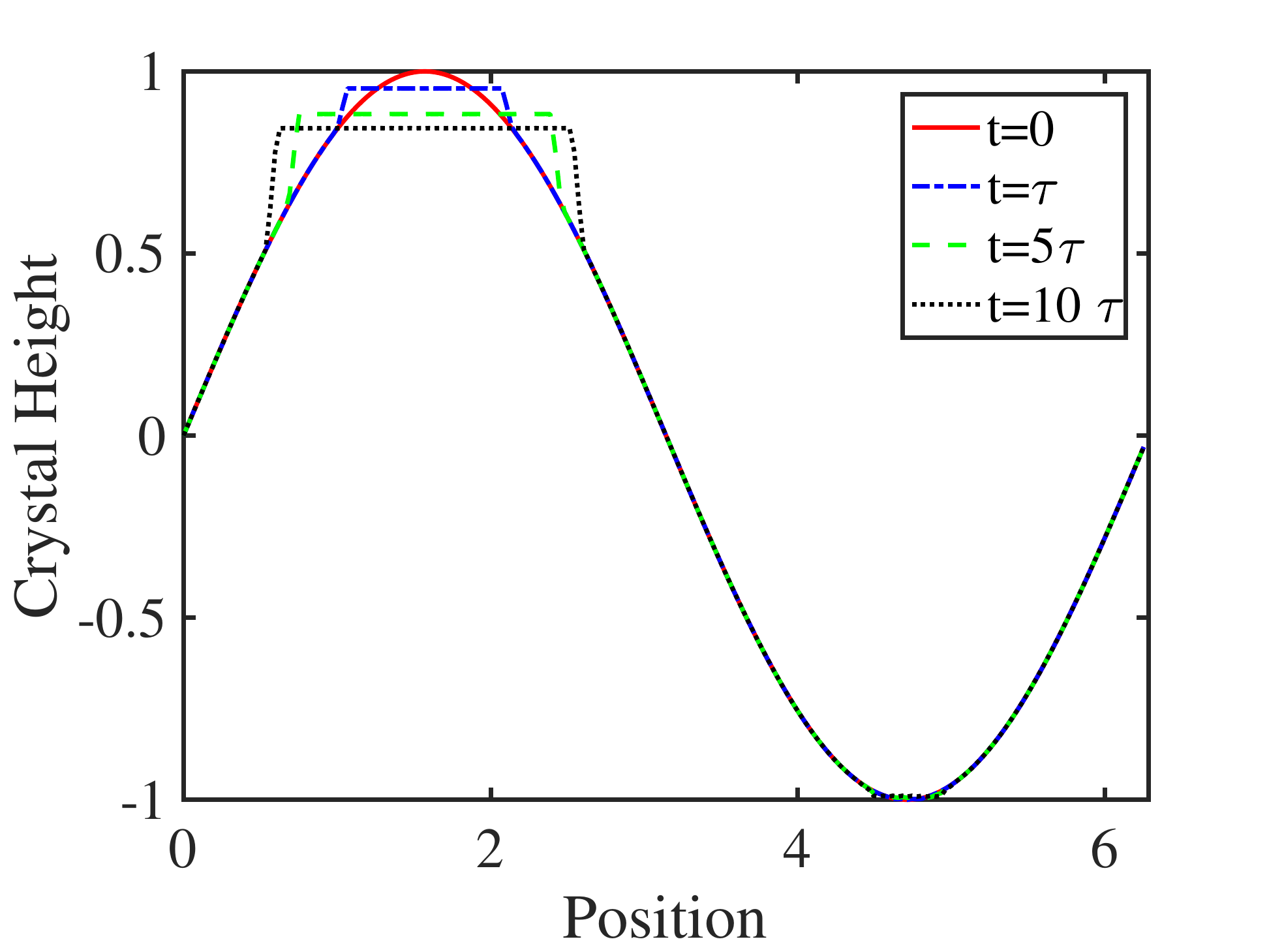} 
\includegraphics[width=4.15cm, trim={1.2cm .15cm 1.8cm .8cm},clip]{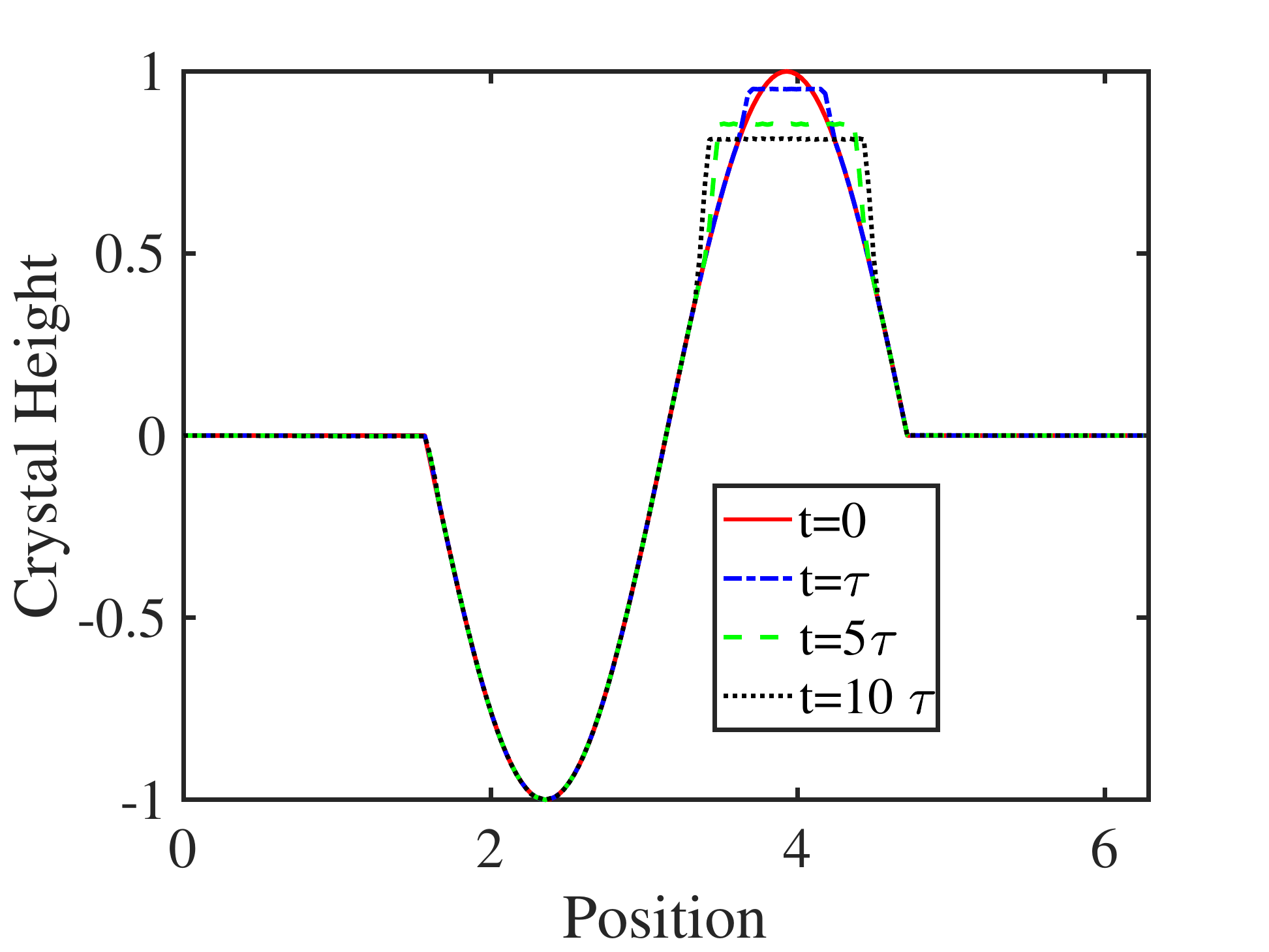} 
\includegraphics[width=4.15cm, trim={1.2cm .15cm 1.8cm .8cm},clip]{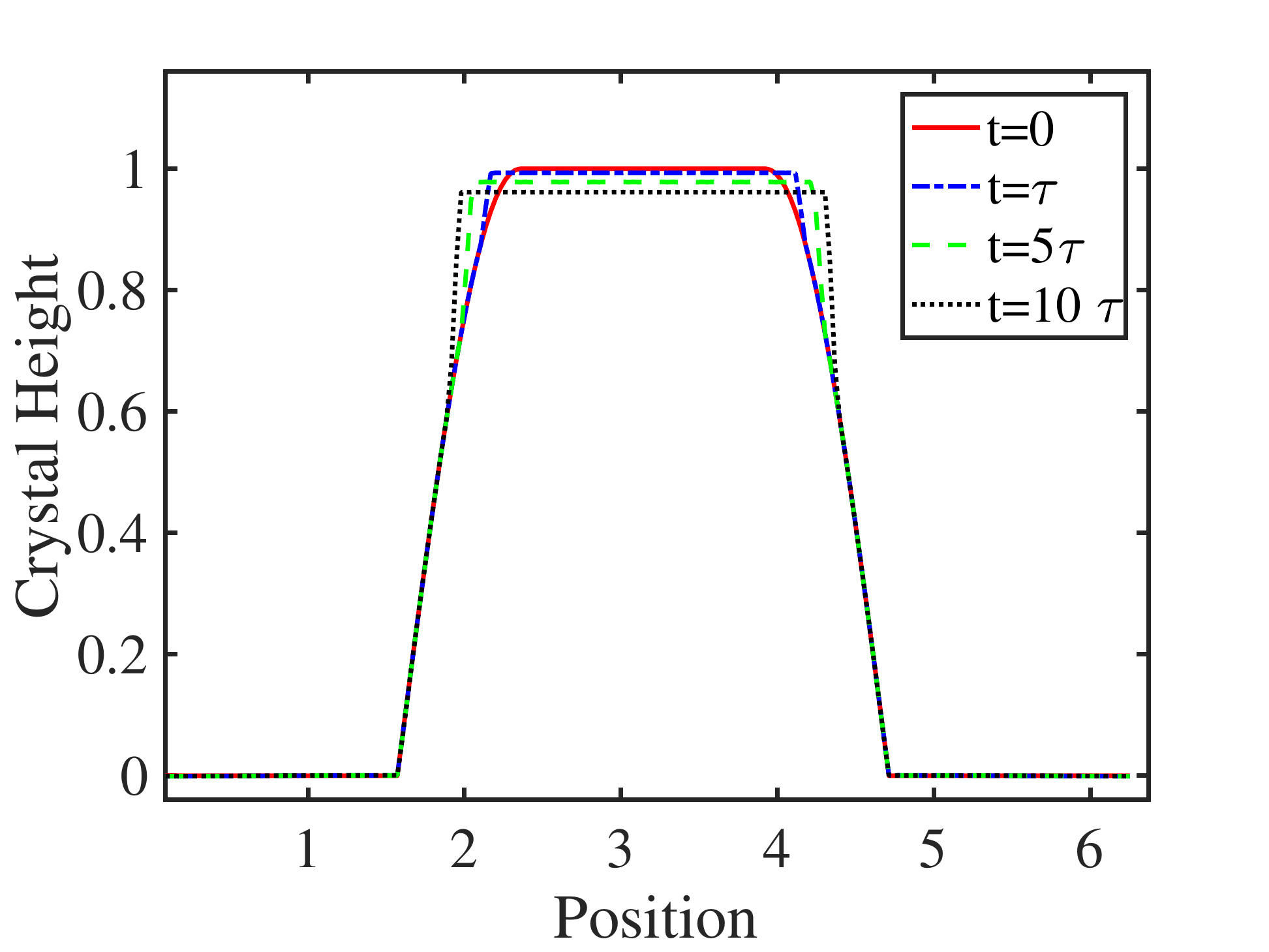} 
\caption{Dynamics of crystal surface evolution equation for different choices of initial data. Near   maxima, flat facets form and  expand outward, while   minima remain stationary.}
\label{fig:id}

\end{figure}
In Figure \ref{fig:id}, we display the dynamics of the crystal surface evolution equation for each choice of initial data. We chose $\epsilon = 0.04$, $N_x = 200$ in each of these calculations, letting $T = 10^{-2}$ in the case of the Sinusoidal and the Facet dynamics and $T = 10^{-3}$ for the Jump dynamics.  Near the maxima, flat facets expand outward like a free boundary type solution, while the minimum is   stationary, as predicted in \cite{LLMM1}.

 \begin{figure}[h]
\hspace{1.8cm} \footnotesize Sinuoidal \hspace{2cm} Jump Discontinuities \hspace{2.2cm}  Facet \\
\includegraphics[width=4.38cm, trim={.1cm 0cm 1.2cm .8cm},clip]{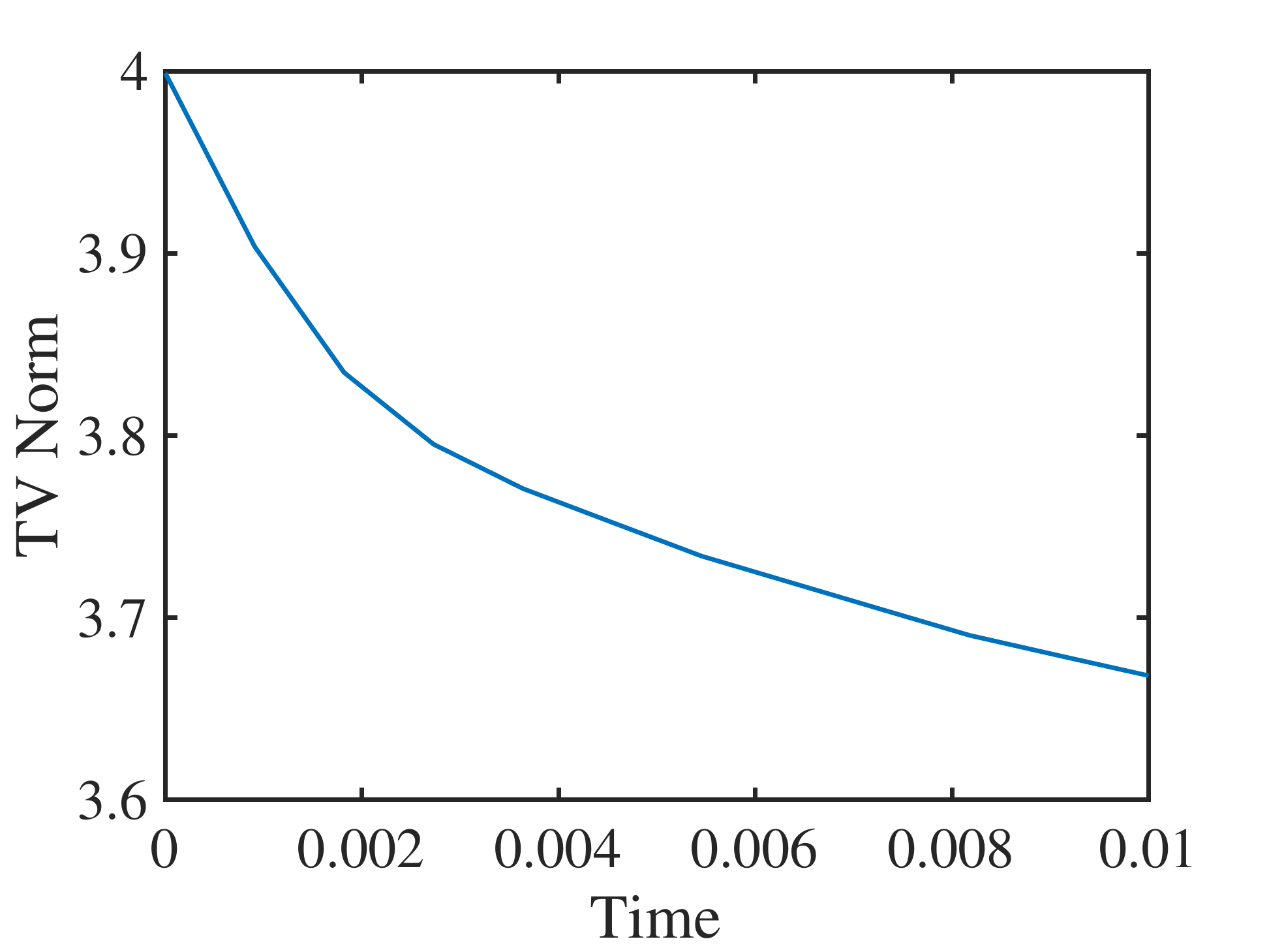} 
\includegraphics[width=4.15cm, trim={1.2cm 0cm 1.2cm .8cm},clip]{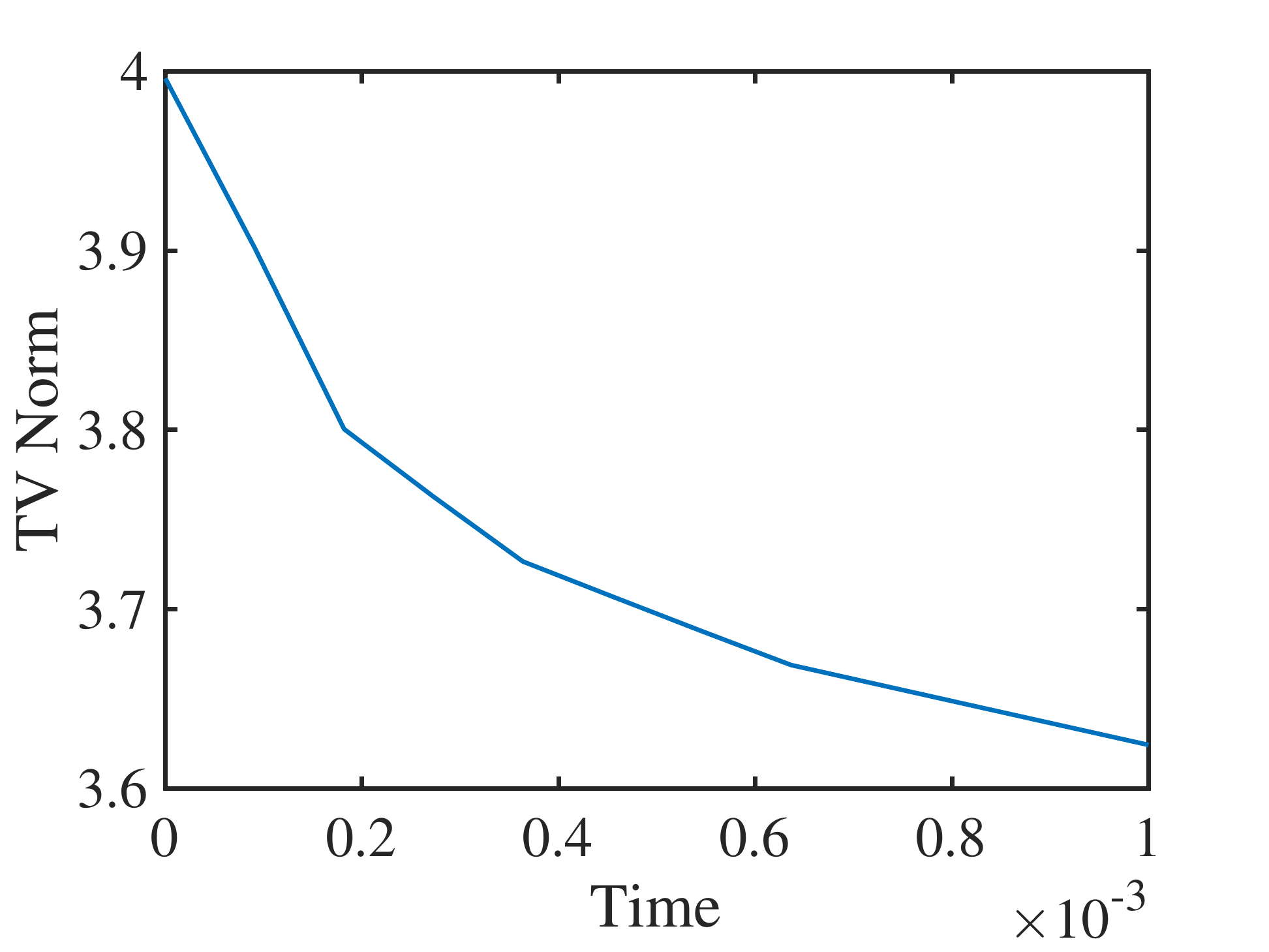} 
\includegraphics[width=4.15cm, trim={1.2cm 0cm 1.2cm .8cm},clip]{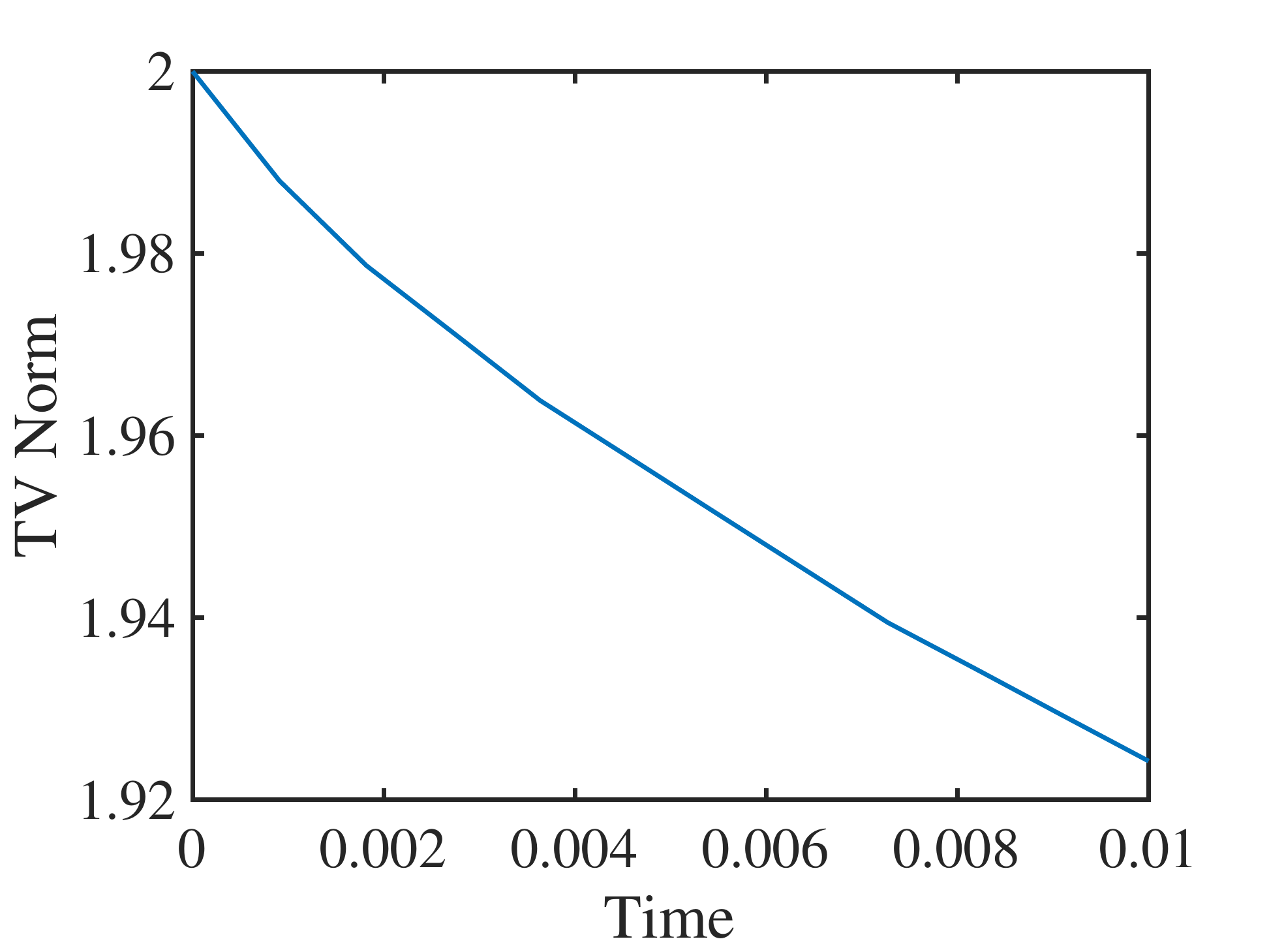} 
%\includegraphics[width=4.38cm, trim={.1cm .0cm 1.2cm 0cm},clip]{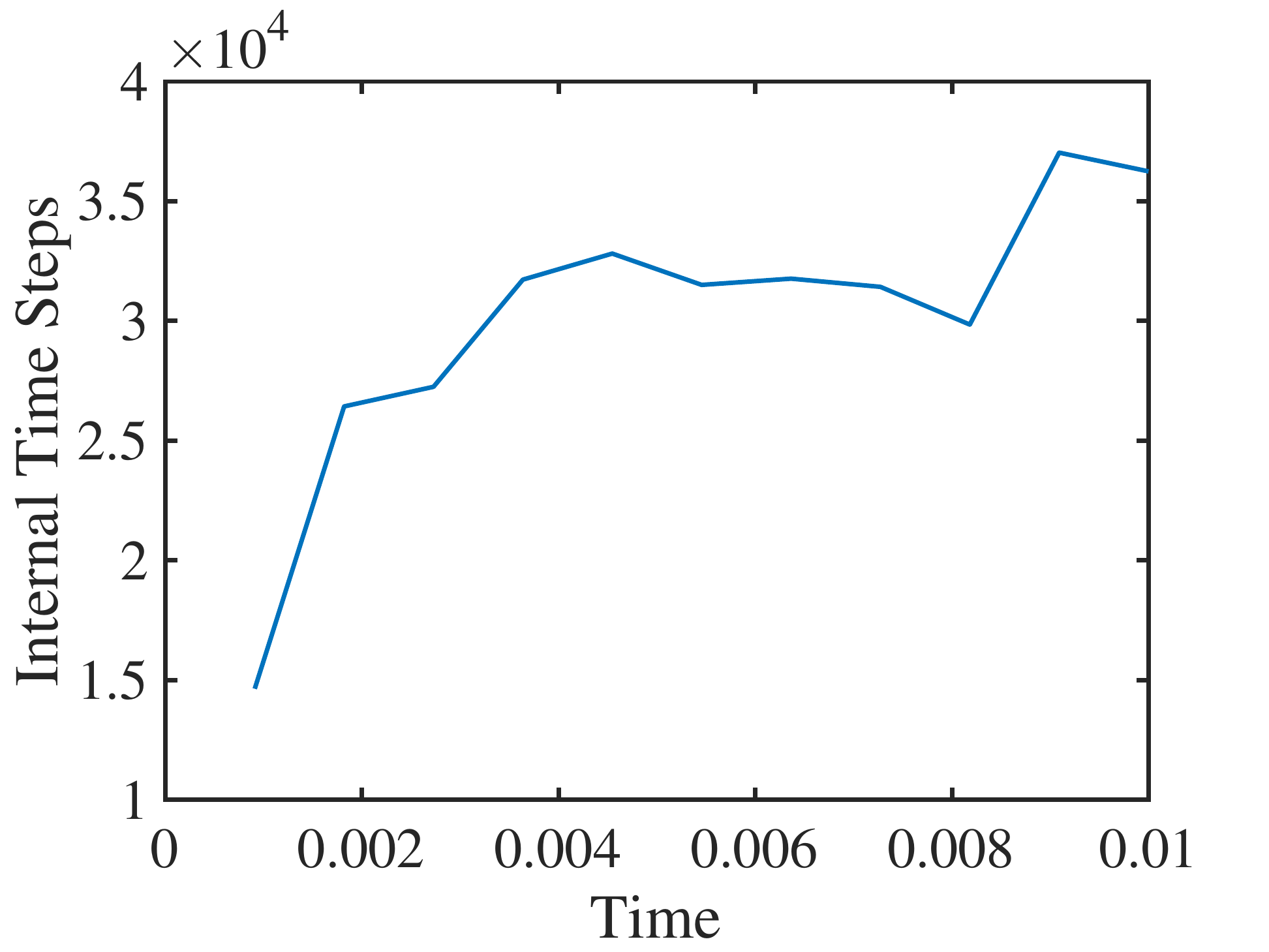} 
%\hspace{.03cm} \includegraphics[width=4.15cm, trim={1.8cm .0cm .6cm 0cm},clip]{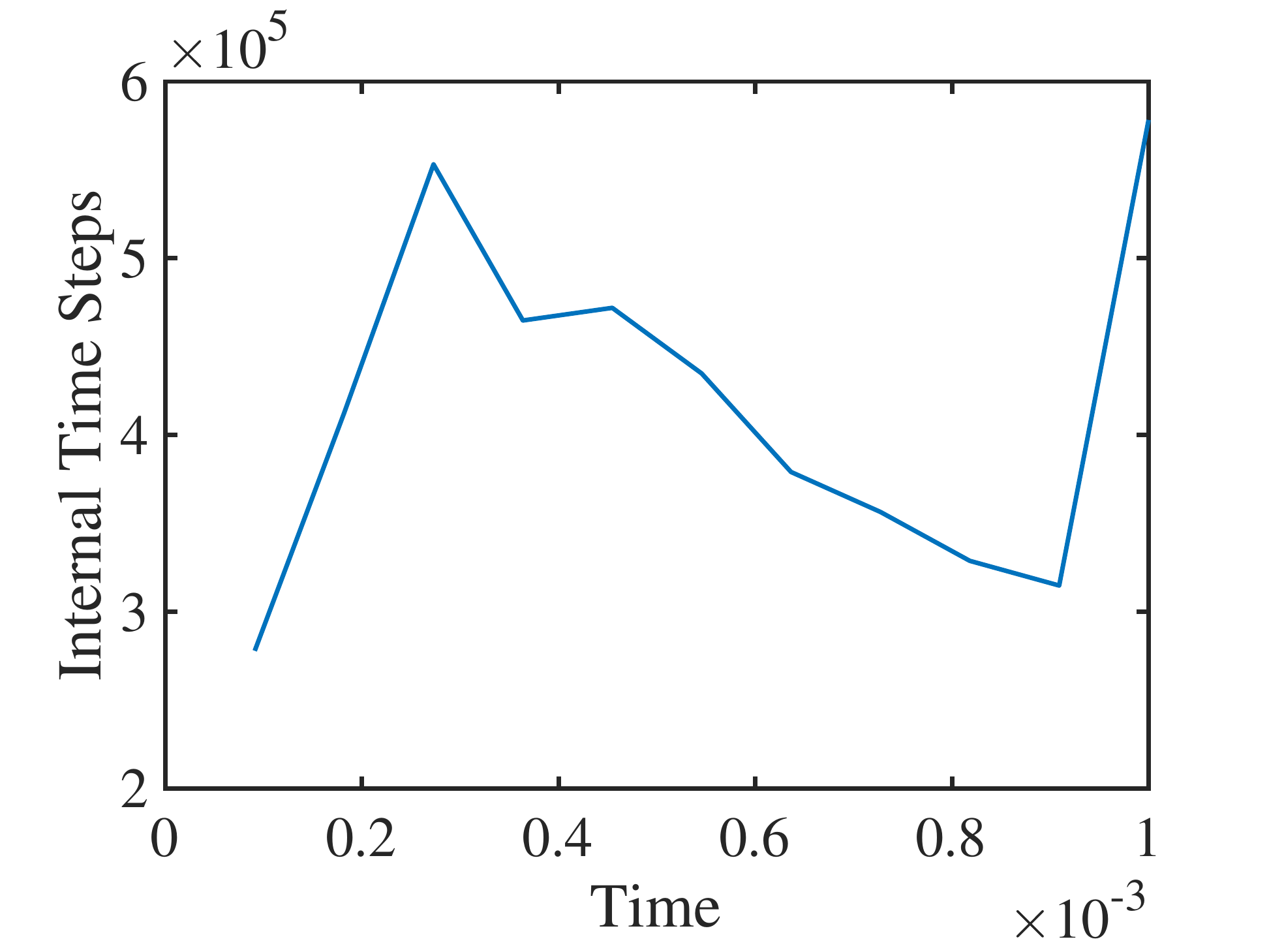} 
% \hspace{.03cm} \includegraphics[width=4.15cm, trim={1.4cm .0cm .8cm 0cm},clip]{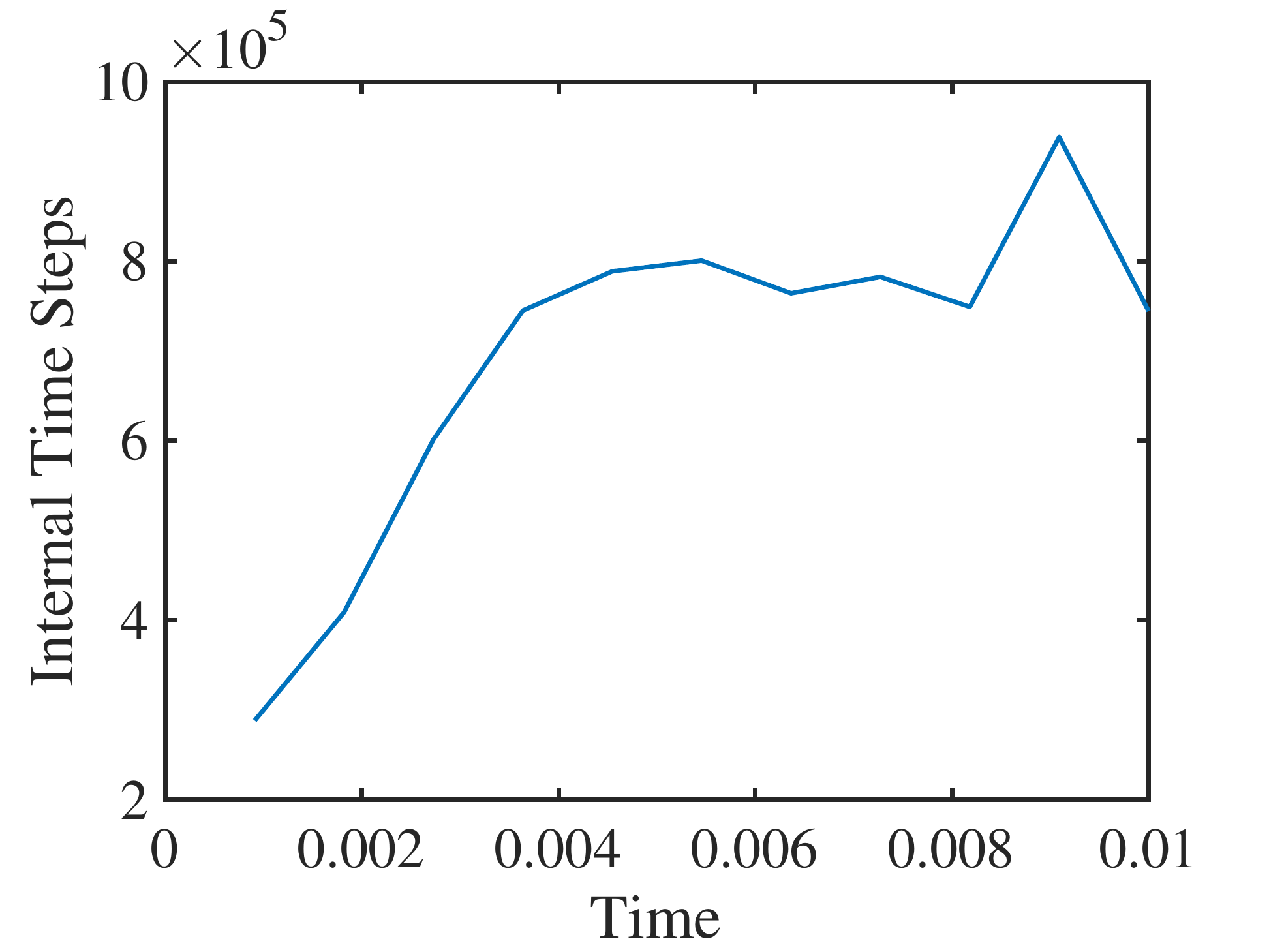} 
\includegraphics[width=4.38cm, trim={.1cm .0cm 1.2cm 0cm},clip]{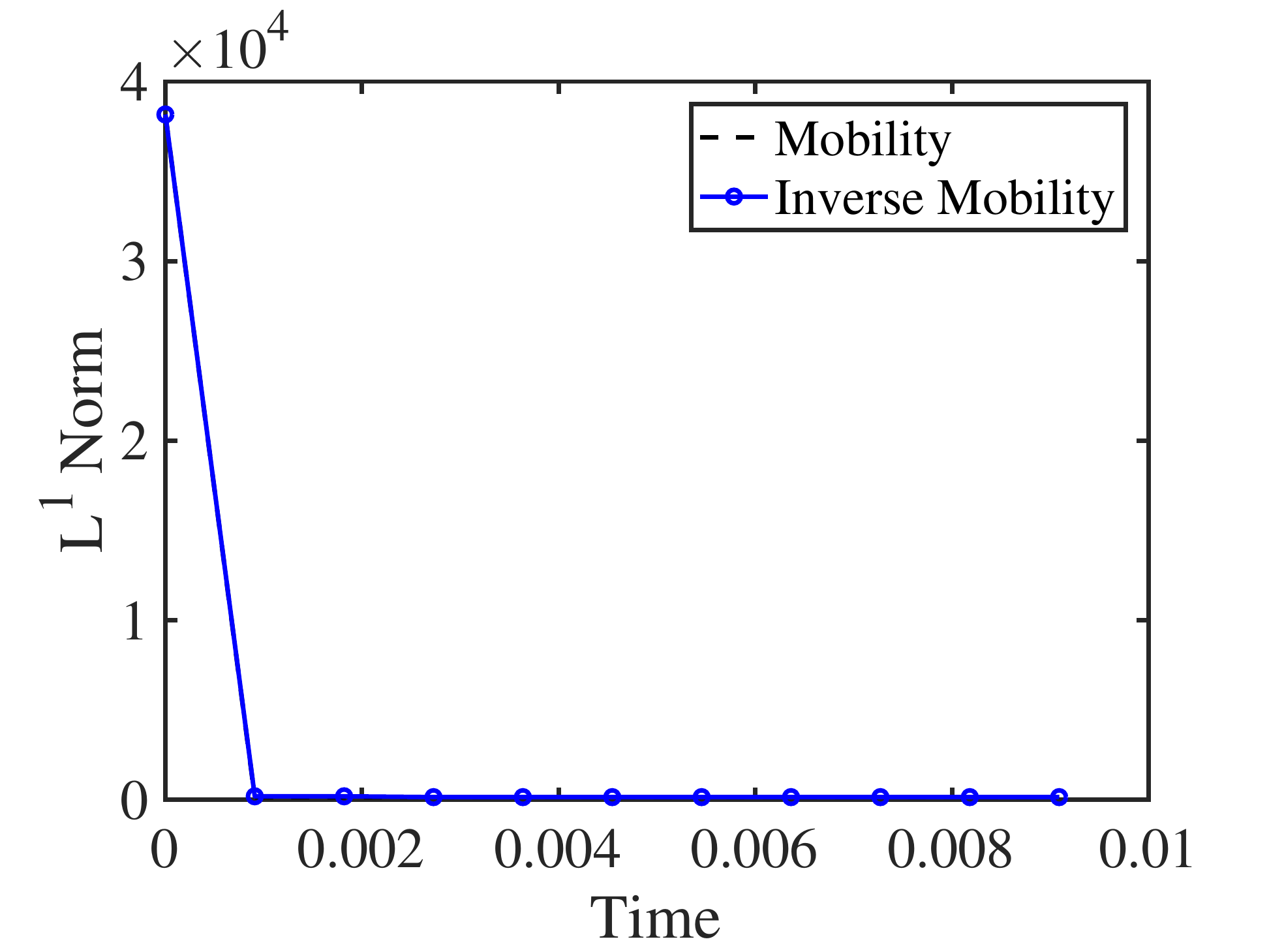} 
\hspace{.1cm} \includegraphics[width=4.18cm, trim={1.8cm .0cm .6cm 0cm},clip]{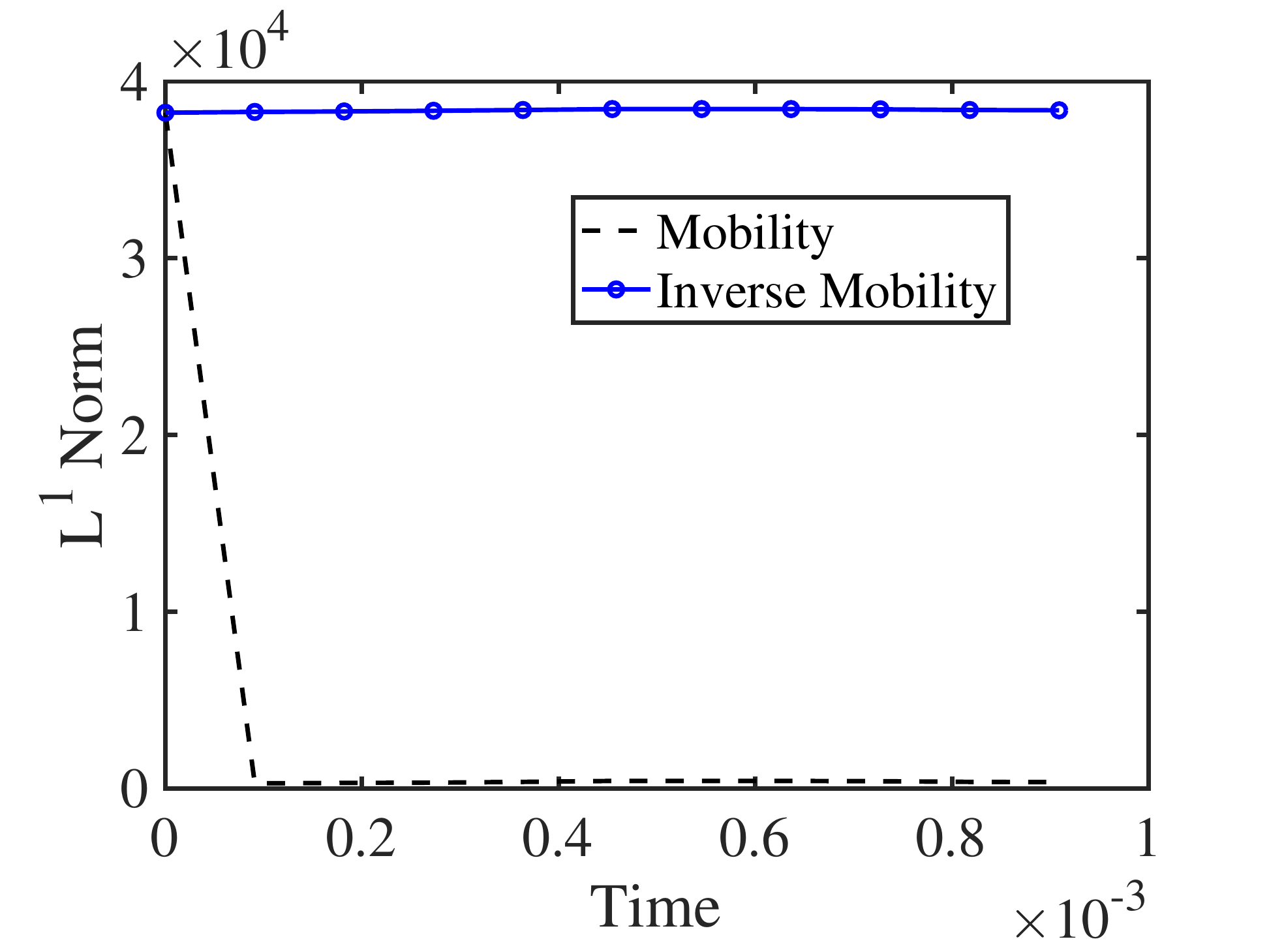} 
\includegraphics[width=4.15cm, trim={1.2cm .0cm 1.2cm 0cm},clip]{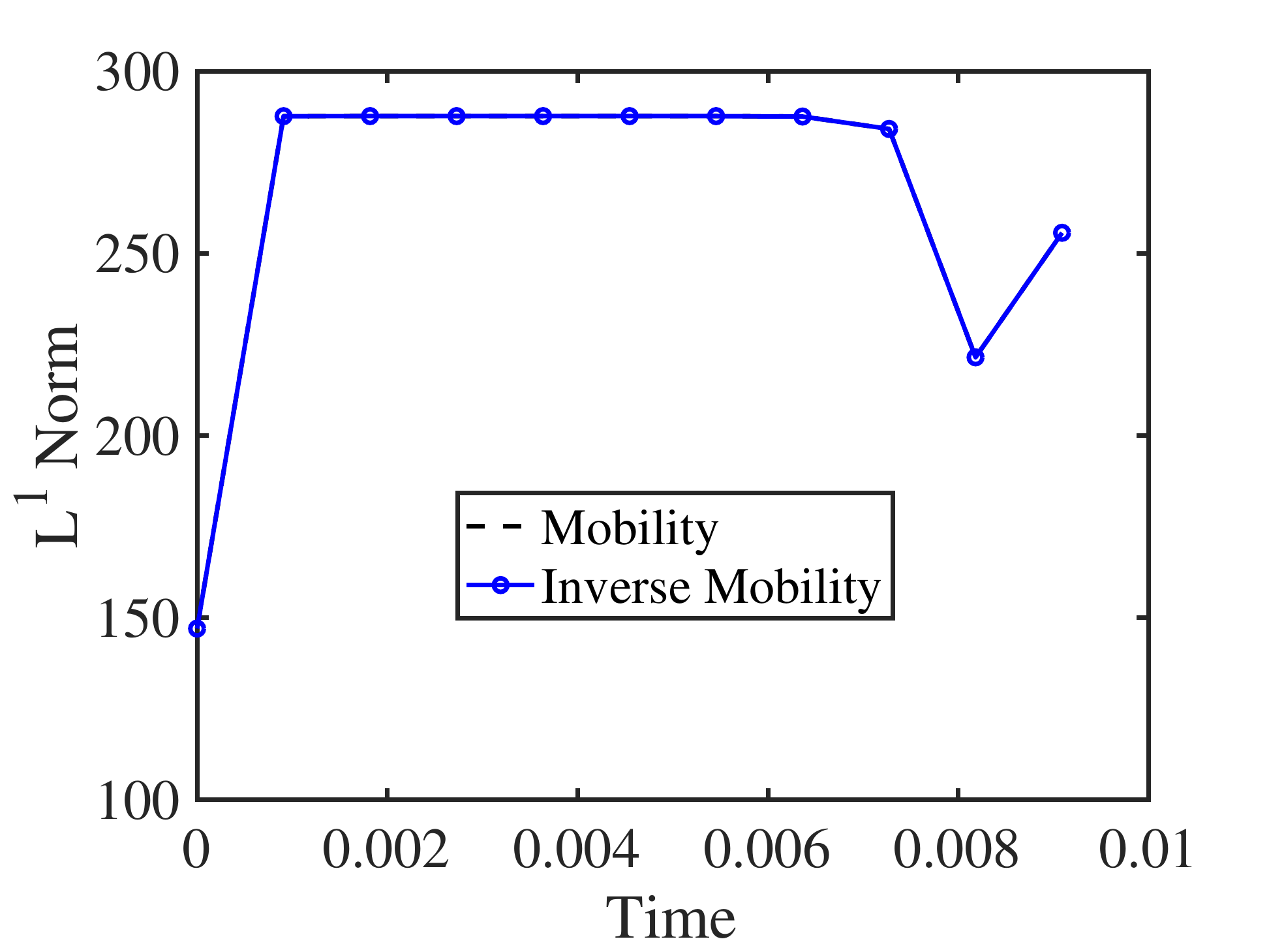} 
\caption{Top row: The total variation energy decreases in time along numerical solutions, reflecting the underlying gradient flow structure. %Middle row: The number of internal time steps required to meet convergence criteria at each external time step. 
Bottom row: The $L^1$ norms of the mobility $M(h)$ and the reciprocal of the mobility $1/M(h)$ are large, but remain bounded along the flow.}
\label{fig:combined}
\end{figure}
In Figure  \ref{fig:combined}, we analyze properties of the numerical method, under the same choices of parameters as in Figure \ref{fig:id}. In the top row, we show the decrease in the discrete TV norm $\|\Dmat h\|_1$ in time along solutions of the equation, reflecting the gradient flow structure of the equation. % In the middle row, we show the number of internal time steps of the PDHG method to converge to each step in the sequence of our outer semi-implict scheme. {\color{blue}should this be a dot plot instead of a line plot? How should we interpret these results?} 
In the bottom row, we plot the $L^1$ norms of the mobility $M(h)$ and its reciprocal $1/M(h)$. A key assumption in our convergence result for the PDHG method, Theorem \ref{mainconvresult}, is that both remain bounded, uniformly in the spatial discretization. We can see in the above simulations that, while these norms are very large, they indeed remain bounded along the flow. 

\begin{figure}[h]
\hspace{1.8cm} \footnotesize $\sgn(x) $ \hspace{2.7cm} $\tanh(10x) $ \hspace{2.8cm}  $\tanh(10x) $ \\
\includegraphics[height=3.2cm, trim={.1cm .15cm 1.8cm .8cm},clip]{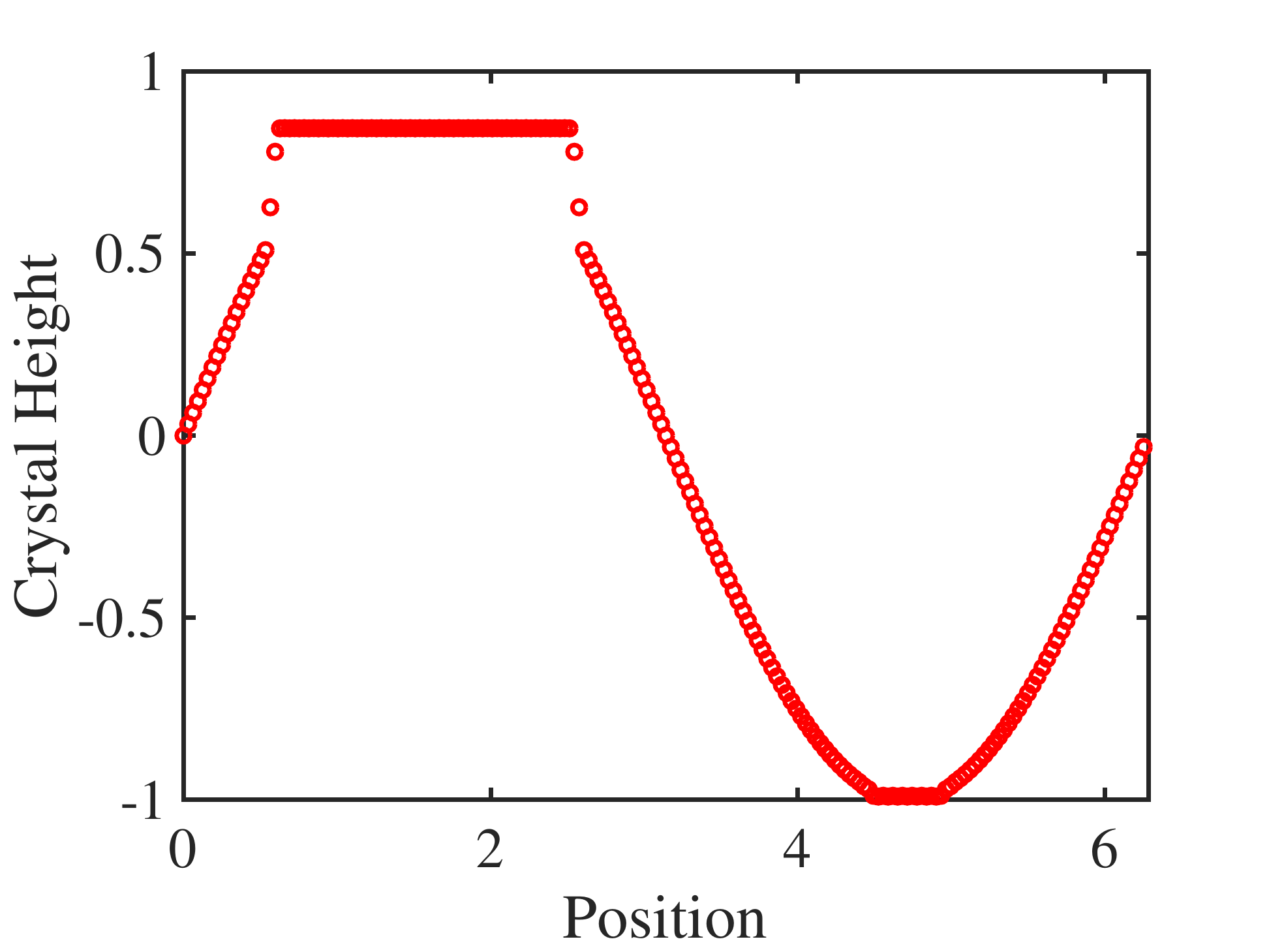} 
\includegraphics[height=3.2cm, trim={1.2cm .15cm 1.8cm .8cm},clip]{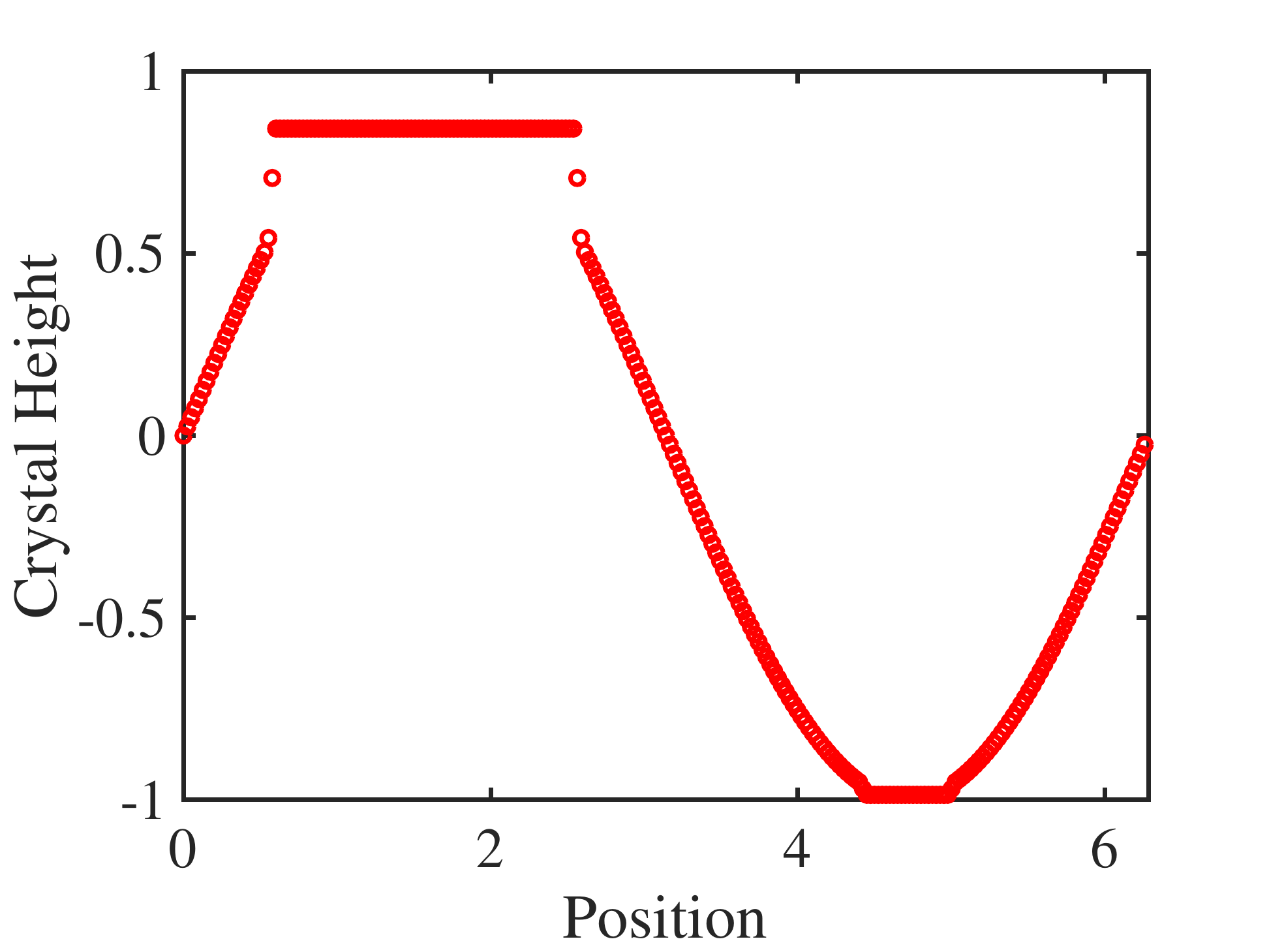}
\includegraphics[height=3.2cm, trim={.1cm .15cm 1.8cm .8cm},clip]{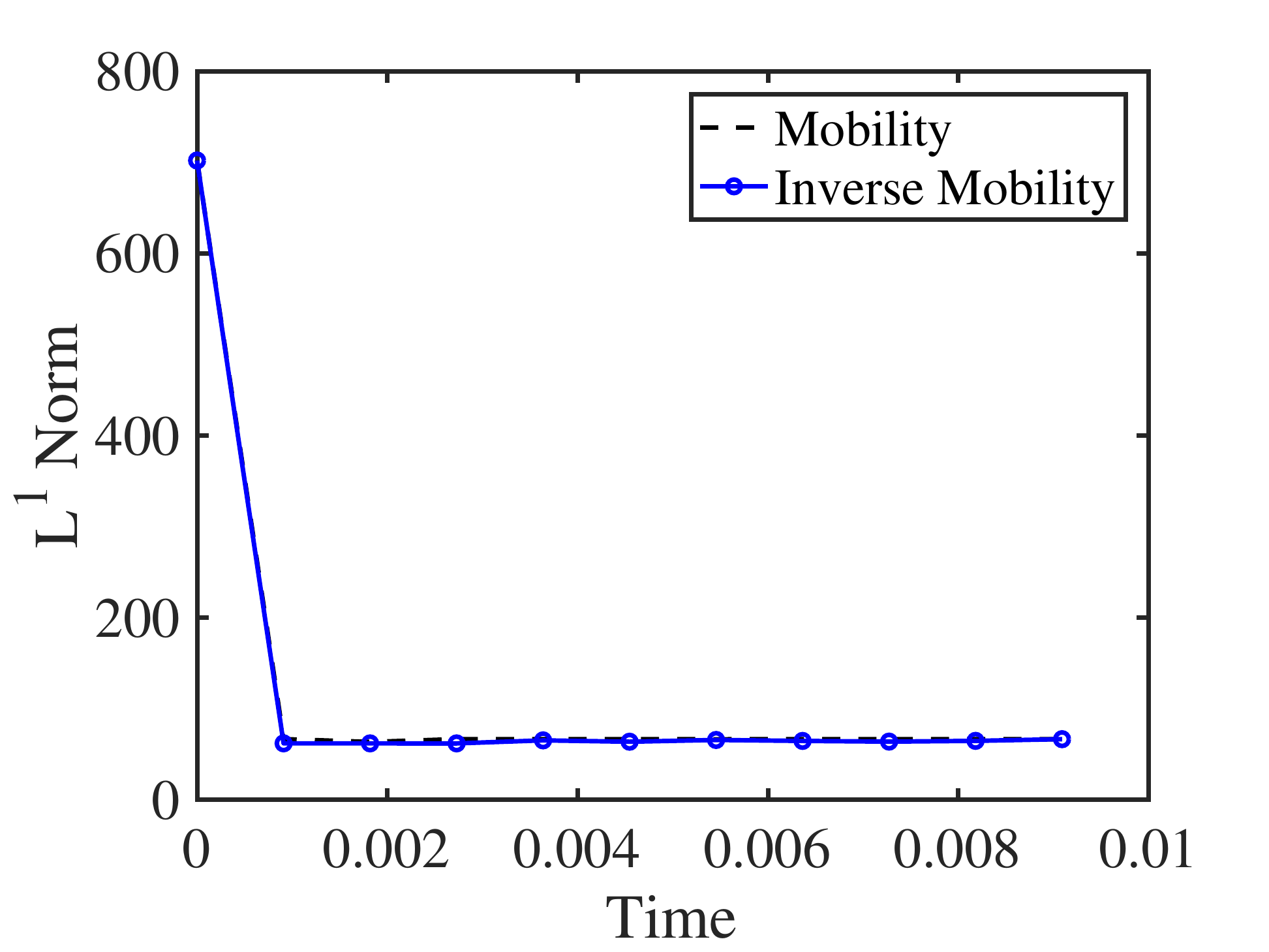}
\caption{We compare the dynamics of the mobility given by equation (\ref{MobCalc}) with a modified mobility, in which $\sgn(x)$ is replaced by $\tanh(10x)$. While the original mobility more accurately prevents facet formation at the local minimum, the modified mobility leads has smaller $L^1$ norm and requires fewer iterations to converge.}
\label{fig:mobilitycontrast}
\end{figure}

   In Figure \ref{fig:mobilitycontrast}, we compare two different choices of mobility: equation (\ref{MobCalc})  and a modified mobility, replacing $\sgn(x)$ with $\tanh (10 x)$. In both cases, we take $\epsilon = .04$. On one hand, the modified mobility has the benefit of drastically decreasing the $L^1$ norm of the mobility and its reciprocal: compare the plot on the right to the bottom left plot of Figure \ref{fig:combined}. The method also requires fewer iterations to meet the stopping criteria. On the other hand, the modified mobility allows for slightly more movement and facet formation at the minimum,  which goes against the predicted dynamics of the original equation: compare the plot on the left with the plot in the middle.

 \begin{figure}[h]
\begin{center}
\includegraphics[height=3.3cm, trim={.1cm .15cm 1.5cm .1cm},clip]{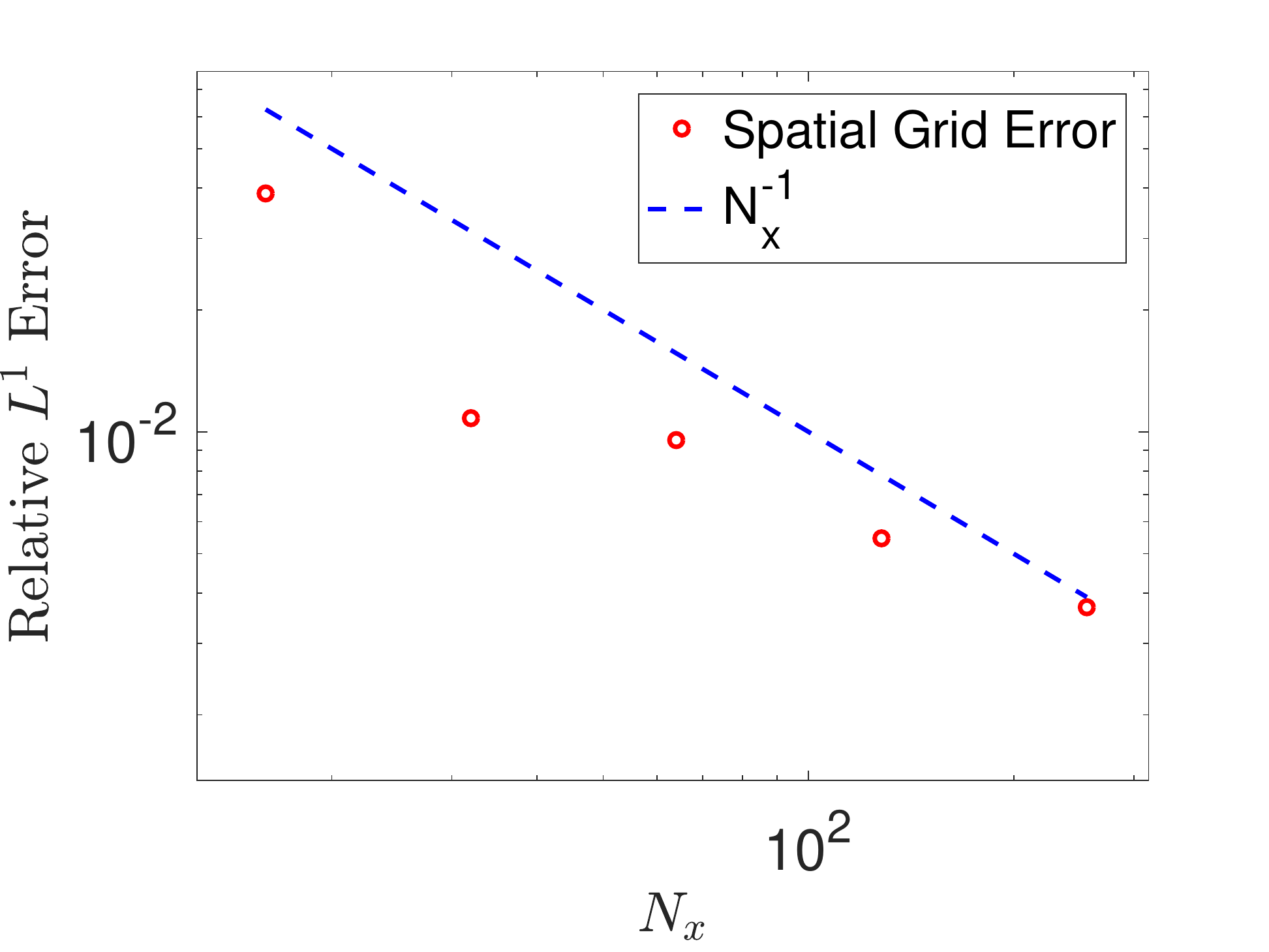}
\includegraphics[height=3.3cm, trim={.1cm .15cm 1.5cm .1cm},clip]{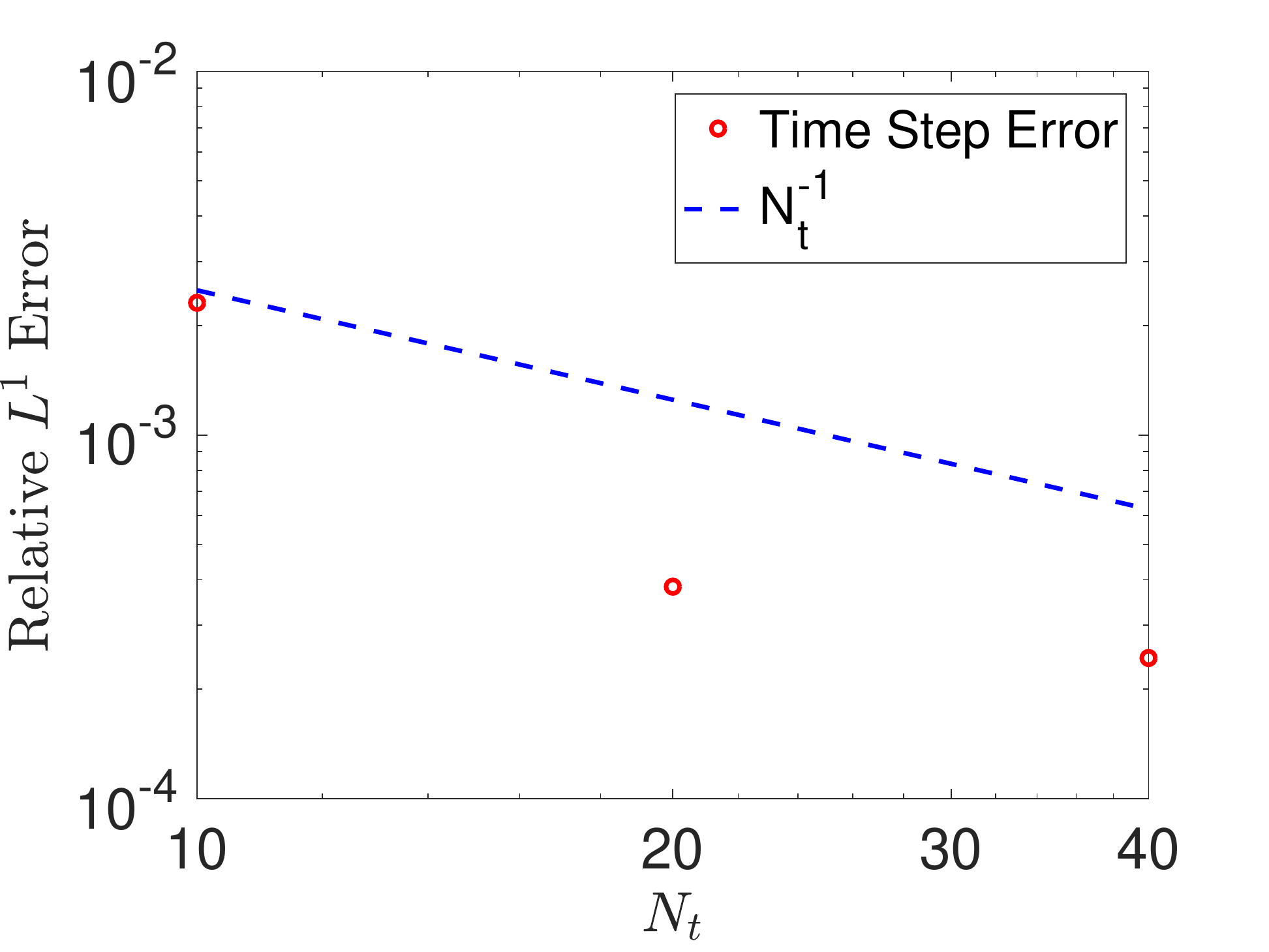}
\includegraphics[height=3.3cm, trim={.1cm .15cm 1.2cm .1cm},clip]{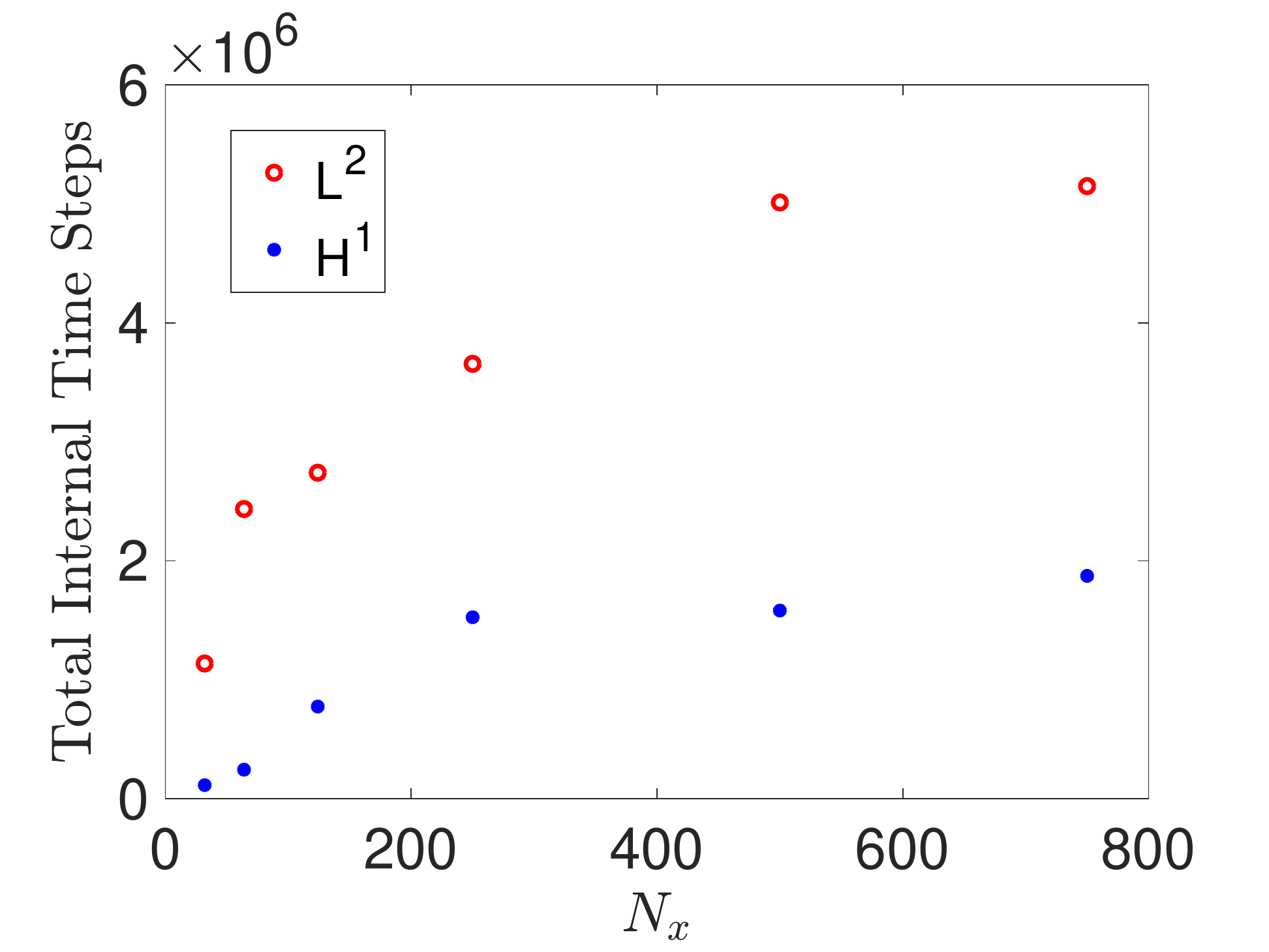}
\caption{Left: Log-Log plot of relative $L^1$ error vs. spatial grid size.  Middle:  Log-Log plot of relative $L^1$ error vs. external time step. Right:  Comparison of number of time steps required to meet stopping criteria for either $\dot{H}^1$ or $L^2$ penalization. We observe superior performance for the $\dot{H}^1$ penalization, especially as the spatial grid is refined. }
\end{center}
\label{fig:xerr}
\end{figure}

Finally, in  Figure \ref{fig:xerr}, we analyze the rate of convergence of our method.  We consider sinusoidal initial data with the modified mobility, replacing $\sgn(x)$ with $\tan(10x)$, $\epsilon = .05$ and $T = 10^{-4}$. On the left, we examine how the relative $L^1$ error  depends on the number of spatial gridpoints $N_x$ for a fixed temporal discretization, $N_t =10$. For   $N_x = 16, 32,  64,128, 256, 512$, we plot $\| h (N_x) - h(2 N_x) \|_{L^1}$. We observe slightly sublinear convergence, in line with the low spatial regularity of our solutions.

In the middle plot, we examine how the relative $L^1$ error scales with the external time step, used to define the semi-implicit scheme $h^n$ via $\tau = T/N_t$, for  a fixed spatial discretization $N_x = 256$.  For $N_t = 5,10,20,40,80$, we plot $\| h (N_t) - h(2 N_t) \|_{L^1}$.  We observe approximately first order convergence, in agreement with the interpretation of our scheme as a semi-implicit version of the minimizing movements scheme, which can be thought of as a generalized Euler method.

In the right plot, we illustrate the importance of the choice of norms in our PDHG algorithm, as explained in Remark \ref{normchoice}. At the fully discrete level, existing work \cite{
  chambolle2016ergodic} ensures that the PDHG algorithm  would converge, even if the norm penalization in the definition of $h^{(m+1)}$ was changed from a $\dot{H}^1$ norm to a $L^2$ norm. At the level of   Algorithm  \ref{alg:Hdotpen}, this would amount to modifying the computation of $h^{(m+1)}$ as follows:
 \begin{align} \label{L2hm}
h^{(m+1)} = \left(\Imat +\frac{\lambda}{\tau} \Amat^{-1}(\cdot -h^n) \right)^{-1} \left( h^{(m)}- \lambda \Dmat^t \phi^{(m)} \right) .
\end{align}
On one hand, to invert the matrix in the above formula, we need $\frac{\tau}{\lambda} \| \Amat\|< 1$. On the other hand, existing convergence results on PDHG     require $\lambda \sigma \| \Dmat^t \Dmat \| < 1$, where $\| \Dmat^t \Dmat \| \to +\infty$ as the spatial grid is refined.  These  requirements lead to significant tension  regarding the size of $\lambda$. In contrast, when choosing the  $\dot{H}^1$ norm to penalize the primal variables in our PDHG algorithm, the analogue of the constraint $\lambda \sigma \| \Dmat^t \Dmat \| < 1$ is simply $\lambda \sigma  < 1$, since the gradient is a bounded operator on $\dot{H}^1$. Thus, our method avoids this source of tension in the definition of the inner time steps $\lambda, \sigma$.

This discussion is born out numerically in the right plot above, in which we compare the number of iterations required for each method as the spatial grid is refined,  $N_x = 32,64,124,250,500,750$. We consider $T=10^{-6}$ external time steps, setting $\sigma= 5 \times 10^{-5}$, $\lambda = 5 \times 10^{-5}$ for the $L^2$ algorithm (the largest we could take to allow convergence for the $L^2$ Algorithm to still converge at all scales) and $\sigma= 5 \times 10^{-4}$, $\lambda = 500$ for our $\dot{H}^1$ algorithm,  Algorithm \ref{alg:Hdotpen}.

\bibliographystyle{abbrv}
\bibliography{KatysBib_013119}

\end{document}